\newtheorem{theorem}{Theorem}[section]
\newtheorem{lem}[theorem]{Lemma}
\newtheorem{cor}[theorem]{Corollary}
\newtheorem{prop}[theorem]{Proposition}
\theoremstyle{definition}
\newtheorem{defi}[theorem]{Definition}
\newtheorem{example}[theorem]{Example}
\theoremstyle{remark}
\newtheorem{rem}[theorem]{Remark}
\newcommand{\BC}{\mathbb{C}}            
\newcommand{\BZ}{\mathbb{Z}}             
\newcommand{\BM}{\mathbb{M}}           
\newcommand{\BI}{\mathbf{i}}                 
\newcommand{\Hlie}{\mathfrak{h}}          
\newcommand{\End}{\mathrm{End}}       
\newcommand{\CM}{\mathcal{M}}                
\newcommand{\BGG}{\mathcal{O}}      
\newcommand{\Simple}{\mathcal{S}}          
\newcommand{\qc}{\chi_{\mathrm{q}}}            
\newcommand{\qcy}{\chi^{\mathrm{Y}}_{\mathrm{q}}}            
\newcommand{\CF}{\mathcal{F}}          
\newcommand{\CFm}{\mathcal{F}_{\mathrm{mer}}}    
\newcommand{\CV}{\mathcal{V}}                  
\newcommand{\wt}{\mathrm{wt}}          
\newcommand{\wtimes}{\bar{\otimes}}     
\newcommand{\dt}{\widetilde{\otimes}}     
\newcommand{\mol}{\mu_{\mathbf{l}}}  
\newcommand{\mor}{\mu_{\mathbf{r}}}  
\newcommand{\BV}{\mathbf{V}}             
\newcommand{\CW}{\mathcal{W}}        
\newcommand{\BW}{\mathbf{W}}          
\newcommand{\CE}{\mathcal{E}}           
\newcommand{\BR}{\mathbf{R}}            
\newcommand{\BY}{\mathbf{Y}}            
\begin{document}
\title{Baxter operators and asymptotic representations}
\author{Giovanni Felder}
\address{G.F: Departement Mathematik,
ETH Z\"{u}rich, 8092 Z\"{u}rich, Switzerland}
\email{giovanni.felder@math.ethz.ch}
\author{Huafeng Zhang}
\address{H.Z: Departement Mathematik,
ETH Z\"{u}rich, 8092 Z\"{u}rich, Switzerland \& 
Institut f\"ur Theoretische Physik, 
ETH Z\"{u}rich, 8093 Z\"{u}rich, Switzerland }
\email{huafeng.zhang@math.ethz.ch}
\begin{abstract}
We introduce a category $\mathcal O$ of representations of the
elliptic quantum group associated with $\mathfrak{sl}_2$ with
well-behaved $q$-character theory. We derive separation of variables
relations for asymptotic representations in the Grothendieck ring of
this category.  Baxter $Q$-operators are obtained as transfer matrices
for asymptotic representations and obey $TQ$-relations as a
consequence of the relations in $K_0(\mathcal O)$.
\end{abstract}
\maketitle
\section{Introduction}\label{s-intro}

In Baxter's formulation \cite{Baxter72} the problem of solving exactly
solvable models of statistical mechanics reduces to finding the common
spectrum of a family $T(z)$ of commuting endomorphisms of a vector
space depending on a complex spectral parameter $z$. For example in
the six- or eight-vertex model the vector space is a tensor product of
copies of $\mathbb C^2$. The Bethe ansatz is a celebrated method to
compute eigenvectors and eigenvalues of the transfer matrices
$T(z)$. In this method one seeks eigenvectors among a specific family
of vectors depending on parameters, called Bethe roots. The condition for the parameters
to yield eigenvectors is a system of algebraic equations---the Bethe
ansatz equations---for the Bethe roots, and one gets an implicit
description of the eigenvectors and eigenvalues. This was done by Lieb
\cite{Lieb} for the ice model, a special case of the six-vertex model,
adapting the method of Bethe \cite{Bethe1931}, who had considered the
Heisenberg spin chain.  Baxter \cite{Baxter72} devised another method
to obtain the Bethe ansatz equation by directly computing the
eigenvalues. The method also works for models where the Bethe ansatz
fails, such as the eight-vertex model, but does not give direct
information on the eigenvectors. One uses a new set of commuting
endomorphisms $Q(z)$---the Q-operator---that commute with the transfer
matrices and obey a functional relation, the Baxter TQ-relation:
\[
T(z)Q(z)=\phi(z)Q(z+\hbar)+\phi(z+\hbar)Q(z-\hbar),
\] 
for some specific function $\phi$ ($\hbar$ is a parameter of the
model).  Moreover $T(z)$ and $Q(z)$ are entire functions of $z$ with
known functional behaviour: for example in the eight-vertex model
$Q(z)$ has theta function-like double periodicity properties. The
common eigenvalues obey the same equation and the functional relation
determines their form: for example in the eight-vertex model the
double periodicity property implies that eigenvalues of $Q(z)$ have
the form $\prod_{i=1}^n\theta(z-z_i)$ where $\theta$ is the odd Jacobi
theta function for some elliptic curve depending on the parameters of
the model and $z_i$ are unknowns to be determined.  Inserting $z=z_i$ in
Baxter's TQ-relation we obtain the Bethe ansatz equations
\[ \prod_{j:j\neq i} \frac{\theta(z_i-z_j+\hbar)}
  {\theta(z_i-z_j-\hbar)}=\frac{\phi(z_i+\hbar)}{\phi(z_i)},\quad i=1,\dots,n.
\] Each solution of this system of $n$ equations for $n$ unknowns
gives a candidate for an eigenvalue of $Q$, from which the
corresponding eigenvalue of $T$ can be computed from the
$TQ$-relation.  

The approach to exactly solvable models based on transfer matrices and
Bethe ansatz was extended to a variety of models of statistical
mechanics, quantum integrable models and quantum field theory. It was
developed into a full-fledged theory by the Leningrad school under the
names Quantum Inverse Scattering Method, and Algebraic Bethe Ansatz,
leading to the theory of quantum groups, see the lecture notes
\cite{Faddeev} for a review.  In modern terminology, if we have a
(suitable) pair $V,W,$ of representations of a quantum group, for
example an affine quantum enveloping algebra, we obtain an $R$-matrix
$R_{V,W}$, an endomorphism of $V\otimes W$, and transfer matrices are
traces over one of the tensor factors, say $W$, called auxiliary
space: $t_W=\mathrm{tr}_W R_{V,W}$. The basic fact, that follows from
the Yang--Baxter equation, is that varying $W$ we get commuting
endomorphisms of $V$, and the transfer matrix $T(z)$ of the six-vertex
model is obtained for $W$ a two-dimensional representation of the
quantum affine algebra of $\mathfrak{sl}_2$ with evaluation parameter $z$ and
$V$ a tensor product of two-dimensional representations.

The representation theory meaning of the Baxter $Q$-operator was
understood much later: Bazhanov, Lukyanov and Zamolodchikov
\cite{BazhanovLukyanovZamolodchikov1997,
BazhanovLukyanovZamolodchikov1999} gave a construction of a
$Q$-operator in a quantum field theory context as a transfer matrix
for a certain infinite dimensional auxiliary space.  Frenkel and
Hernandez constructed $Q$-operators for arbitrary untwisted affine
quantum enveloping algebras as transfer matrices defined as traces
over certain representations of a Borel subalgebra of the quantum loop algebra. These pre-fundamental representations belong to a category that had
previously been introduced and studied by Hernandez and Jimbo
\cite{HJ}. The Baxter TQ-relations follow then from
relations in the Grothendieck ring of this category. The point is
that the $R$-matrix is given by the action of the tensor product
of two opposite Borel subalgebras, so that $R_{V,W}$ makes sense
if the auxiliary space $W$ is a representation of a Borel subalgebra.

The goal of this paper is to extend these results to the elliptic
quantum group associated to $\mathfrak{sl}_2$. One new feature is the
appearance of the dynamical parameter, so that transfer matrices are
now difference operators acting on functions of the dynamical
parameters and it requires some care to extend the constructions to
this case. A more serious new difficulty is that the $R$-matrix does not
have the same triangular structure as in the affine case, so it does
not make sense to consider representations of Borel subalgebras. We
show however that, at least in the $\mathfrak{sl}_2$-case considered
here, one can construct $Q$-operators as transfer matrices for certain
infinite dimensional representations belonging to a suitable abelian
category $\mathcal O$. The objects of this category have well-defined
$q$-characters \cite{Kn}, \cite{FR}, and this
allows us to derive relations in the Grothendieck ring, from which
$TQ$-relations are obtained.

The paper is structured as follows. In Section 2 we review the theory
of the elliptic quantum group associated to $\mathfrak {sl}_2$,
discuss the notion of eigenvalues for difference operators, and define
asymptotic representations. A category $\mathcal O$ of representations
containing all asymptotic representations is constructed in Section
3. We define the $q$-character map and show that it is an injective
ring homomorphism from the Grothendieck ring $K_0(\mathcal O)$ to a
suitable commutative ring $\mathcal M_t$ and deduce relations among
asymptotic representations in $K_0(\mathcal O)$. We identify highest
weights of simple objects in $\mathcal O$ in Section 4 and describe
the classes of finite dimensional modules in $K_0(\mathcal O)$ in
terms of asymptotic representations. Transfer matrices associated with
representations in $\mathcal O$ are constructed in Section 5. In
particular we construct the $Q$-operator and prove its $TQ$-relations
using the relations found in Section 3. Future directions are
indicated in Section 6. The same construction also gives $Q$-operators for
the Yangian of $\mathfrak{sl}_2$ as transfer matrices associated with
asymptotic Yangian representations. We indicate this in the Appendix.
\section{The elliptic quantum group and its representations}\label{s-pre}
Let $\BM$ be the field of meromorphic functions $g(x)$ on $x \in \BC$. It contains the subfield $\BC$ of constant functions. We work mostly with $\BM$-vector spaces. An $\BM$-linear (or more generally $\BC$-linear) map $\Phi$ of two $\BM$-vector spaces will also be denoted by $\Phi(x)$ if the dependence on $x$ needs to be precised. Fix $\hbar \in \BC^{\times}$.

\subsection{Meromorphic eigenvalues.} To work with the difference operators in representations of the elliptic quantum group, let us introduce a category $\CF$. An object in $\CF$ is a triplet $(V, K_+(z), K_-(z))$ with the following conditions:
\begin{itemize}
\item[(F1)]  $V$ is a finite-dimensional $\BM$-vector space, and $K_{\pm}(z): V \longrightarrow V$ are $\BC$-linear maps with parameter $z \in \BC$ (maybe not well-defined on all of $\BC$);
\item[(F2)] if $(v_j)_{1\leq j \leq n}$ is an $\BM$-basis of $V$, then there exist meromorphic functions $a_{ij}^{\pm}(z;x)$ on $(z,x) \in \BC^2$ for $1\leq i,j\leq n$ such that
\begin{equation}  \label{def: difference operator}
K_{\pm}(z) \left(\sum_{j=1}^n g_j(x) v_j\right) = \sum_{i,j=1}^n a_{ij}^{\pm}(z;x) g_j(x\pm \hbar) v_i \quad \forall g_j(x) \in \BM.    
\end{equation}
\end{itemize}
A morphism $(V,K_+(z),K_-(z)) \longrightarrow (V',K_+'(z),K_-'(z))$ in $\CF$ is an $\BM$-linear map $\Phi: V \longrightarrow V'$ such that $\Phi K_{\pm}(z) = K_{\pm}'(z) \Phi: V \longrightarrow V'$. When there is no confusion, for simplicity we also denote $(V,K_+(z),K_-(z))$ by $V$. 

$\mathrm{Hom}_{\CF}(V,V')$ is a sub-$\BC$-vector space of $\mathrm{Hom}_{\BM}(V,V')$, making $\CF$ into an abelian category. The (co)kernel of a morphism $\Phi: V \longrightarrow V'$ in $\CF$ is induced from that of the $\BM$-linear map  $\Phi: V \longrightarrow V'$. By induction on $\dim_{\BM} (V)$, every object $V$ in $\CF$ admits a Jordan-H\"{o}lder series: a sequence of subobjects $V = V_m \supset V_{m-1} \supset \cdots \supset V_1 \supset V_0 = 0$ such that each quotient object $V_i/V_{i-1}$ is simple.

Let $a^{\pm}(x) \in \BM^{\times}$. Define $\BM[a^+(z),a^-(z)] := (\BM, K_+(z),K_-(z)) \in \CF$ by\footnote{Functions in $\BM$ have variable $x$. Be aware of the change of variables $x \mapsto z$ here.}
$$ K_{\pm}(z) (g(x)) = g(x\pm\hbar) a^{\pm}(z) \quad \mathrm{for}\ g(x) \in \BM. $$
\begin{defi} \label{def: meromorphic eigenvalue}
$\CFm$ is the full subcategory of $\CF$ consisting of objects $V$ whose quotient objects in Jordan-H\"{o}lder series are isomorphic to the $\BM[a^{+}(z),a^-(z)]$ for $a^{\pm}(x) \in \BM^{\times}$. 
\end{defi}
From the definition it is not difficult to show 
\begin{lem} \label{lem: abelian category meromorphic eigenvalue}
$\CFm$ is an abelian subcategory of $\CF$. An object $V$ of $\CF$ is in $\CFm$ if and only if there exists an $\BM$-basis $(v_i)_{1\leq i \leq n}$ of $V$ with respect to which the matrices $(a_{ij}^{\pm}(z;x))_{1\leq i,j \leq n}$ in Equation \eqref{def: difference operator} are upper triangular whose diagonals are independent of $x$ and non-zero.
\end{lem}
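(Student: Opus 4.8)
The plan is to prove the two assertions of Lemma~\ref{lem: abelian category meromorphic eigenvalue} in turn: first the abelian-subcategory statement, then the basis criterion. I would start from the observation that $\CFm$ is, by Definition~\ref{def: meromorphic eigenvalue}, closed under subobjects and quotients inside $\CF$ (a Jordan--H\"older series of a subobject or quotient refines to one of the ambient object, whose factors are the prescribed $\BM[a^+(z),a^-(z)]$). Thus it suffices to check closure under extensions; together with the fact that $\CF$ is abelian and $\CFm$ is full, this makes $\CFm$ an abelian subcategory. For an extension $0 \to V' \to V \to V'' \to 0$ with $V',V'' \in \CFm$, I would splice a Jordan--H\"older series of $V'$ with the preimage of one for $V''$ to get a Jordan--H\"older series of $V$ whose factors all have the required form; hence $V \in \CFm$.

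Next I would prove the basis criterion. For the ``only if'' direction, take $V \in \CFm$ with Jordan--H\"older series $0 = V_0 \subset V_1 \subset \cdots \subset V_n = V$, $\dim_{\BM} V_i = i$, each factor $V_i/V_{i-1} \cong \BM[a_i^+(z),a_i^-(z)]$. Choose $v_i \in V_i$ lifting the canonical $\BM$-generator $1$ of the quotient $V_i/V_{i-1} \cong \BM$; then $(v_1,\dots,v_n)$ is an $\BM$-basis of $V$ adapted to the filtration. Applying \eqref{def: difference operator} to $v_j$ and reducing mod $V_{j-1}$, the image in $V_j/V_{j-1}$ is $K_{\pm}(z)(1) = a_j^{\pm}(z)$, which forces $a_{jj}^{\pm}(z;x) = a_j^{\pm}(z)$ (independent of $x$, and non-zero since $a_j^{\pm}(x) \in \BM^{\times}$) and $a_{ij}^{\pm}(z;x) = 0$ for $i > j$; so the matrices are upper triangular with $x$-independent, non-zero diagonal. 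For the ``if'' direction, suppose such a basis exists. Then each $V_j := \mathrm{span}_{\BM}(v_1,\dots,v_j)$ is a subobject of $V$ in $\CF$ by upper-triangularity, and $V_j/V_{j-1}$ has $\BM$-dimension $1$ with $K_{\pm}(z)$ acting on the class of $v_j$ by $g(x) \mapsto a_{jj}^{\pm}(z;x)\, g(x \pm \hbar)$. Since $a_{jj}^{\pm}(z;x)$ is non-zero and independent of $x$, set $a_j^{\pm}(z) := a_{jj}^{\pm}(z;x)$; one must check $a_j^{\pm}(x) \in \BM^{\times}$, i.e.\ that as a function of $x$ it is meromorphic and not identically zero, which follows since the original $a_{ij}^{\pm}(z;x)$ are meromorphic on $\BC^2$ and the diagonal entry is a non-zero constant in $x$. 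Hence $V_j/V_{j-1} \cong \BM[a_j^+(z),a_j^-(z)]$, and this filtration is a Jordan--H\"older series witnessing $V \in \CFm$.

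The only genuinely delicate point, and the one I would treat most carefully, is the compatibility of the two descriptions of a one-dimensional object: given that $K_{\pm}(z)$ acts on a one-dimensional $\BM$-vector space by the rule \eqref{def: difference operator} with a scalar $a^{\pm}(z;x)$, one needs that the hypothesis ``$a^{\pm}(z;x)$ is independent of $x$ and non-zero'' precisely matches the datum ``$a^{\pm}(x) \in \BM^{\times}$'' appearing in the definition of $\BM[a^+(z),a^-(z)]$ (up to the deliberate $x \leftrightarrow z$ renaming flagged in the footnote). This is a matter of unwinding the change of variables, but it is where one could easily slip: one must verify that a simple object of $\CFm$ is \emph{exactly} of the form $\BM[a^+(z),a^-(z)]$, with no further quotient allowed, which is immediate since $\dim_{\BM} = 1$. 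Everything else is bookkeeping with filtrations, and no analytic subtlety beyond ``meromorphic'' being stable under the operations used arises.
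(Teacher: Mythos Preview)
Your proposal is correct and is precisely the straightforward argument the paper has in mind: the text gives no proof of this lemma beyond the phrase ``From the definition it is not difficult to show,'' and your filtration/Serre-subcategory reasoning is exactly the intended unwinding of Definition~\ref{def: meromorphic eigenvalue}. The care you take with the $x \leftrightarrow z$ renaming for the one-dimensional factors is appropriate and matches the footnote.
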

Notice that: $v_1 \in V$ is an eigenvector of the $\BC$-linear maps $K_{\pm}(z)$ of eigenvalue $a_{11}^{\pm}(z;x) = a_{11}^{\pm}(z)$ respectively; $(V,K_-(z)^{-1}, K_+(z)^{-1}) \in \CFm$. 
\begin{lem}\label{lem: iso of one-dim mero eigenvalue}
$\BM[a_1^+(z),a_1^-(z)] \cong \BM[a_2^+(z),a_2^-(z)]$ in $\CF$ if and only if there exists $c \in \BC^{\times}$ such that $a_2^{\pm}(z) = c^{\pm 1}a_1^{\pm}(z)$.
\end{lem}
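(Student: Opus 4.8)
The plan is to make the morphism sets completely explicit and then reduce the claim to an elementary functional equation. Both objects have underlying $\BM$-vector space $\BM$, which is one-dimensional, so any $\BM$-linear map $\Phi: \BM[a_1^+(z),a_1^-(z)] \to \BM[a_2^+(z),a_2^-(z)]$ is multiplication by a fixed element $f(x) \in \BM$, and $\Phi$ is an isomorphism exactly when $f \in \BM^\times$. Testing the intertwining condition $\Phi K_\pm^{(1)}(z) = K_\pm^{(2)}(z)\Phi$ on the constant function $1 \in \BM$ (the general case then follows from $\BM$-linearity, since $K_\pm$ shift the argument uniformly) shows that $\Phi$ is a morphism in $\CF$ if and only if
\begin{equation*}
f(x)\, a_1^{\pm}(z) = f(x\pm\hbar)\, a_2^{\pm}(z) \qquad \text{for both signs.}
\end{equation*}

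For the \emph{if} direction, given $c \in \BC^\times$ with $a_2^{\pm}(z) = c^{\pm 1} a_1^{\pm}(z)$, I would exhibit an explicit intertwiner: fixing a branch of the logarithm, set $f(x) = c^{-x/\hbar} = \exp\!\big(-(x/\hbar)\log c\big)$. This is entire and nowhere vanishing, hence lies in $\BM^\times$, and it satisfies $f(x\pm\hbar) = c^{\mp 1} f(x)$, so $f(x)\,a_1^{\pm}(z) = c^{\mp 1} f(x)\cdot c^{\pm 1} a_1^{\pm}(z) = f(x\pm\hbar)\, a_2^{\pm}(z)$. Thus multiplication by $f$ is an isomorphism in $\CF$.

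For the \emph{only if} direction, suppose $\Phi$, given by multiplication by $f \in \BM^\times$, is an isomorphism. Since $a_1^{\pm} \in \BM^\times$ and $f \neq 0$, the displayed relation rewrites as $f(x\pm\hbar)/f(x) = a_1^{\pm}(z)/a_2^{\pm}(z)$. The left-hand side is a meromorphic function of $x$ alone and the right-hand side of $z$ alone, so evaluating the latter at one generic point forces both to equal a constant $c_\pm \in \BC^\times$ (nonzero because $f \in \BM^\times$). Hence $a_2^{\pm}(z) = c_\pm^{-1} a_1^{\pm}(z)$ and $f(x\pm\hbar) = c_\pm f(x)$. Substituting $x \mapsto x+\hbar$ in the minus-sign equation and comparing with the plus-sign equation gives $c_+ c_- = 1$; putting $c := c_+^{-1} = c_-$ yields $a_2^{+}(z) = c^{+1} a_1^{+}(z)$ and $a_2^{-}(z) = c^{-1} a_1^{-}(z)$, which is the asserted form $a_2^{\pm}(z) = c^{\pm 1} a_1^{\pm}(z)$.

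There is no serious obstacle here; the points that deserve a little care are the reduction of the intertwining condition to the scalar functional equation, the separation-of-variables remark that a meromorphic function on $\BC^2$ which is constant in one argument is globally constant, and the bookkeeping of the constant—in particular the identity $c_+ c_- = 1$ forced by the compatibility of the two shift relations. The construction of the intertwiner in the \emph{if} direction is dispatched once and for all by the exponential ansatz for $f$.
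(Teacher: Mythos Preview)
Your argument is correct. The paper states this lemma without proof, so there is no approach to compare against; your reduction to the scalar functional equation $f(x)\,a_1^{\pm}(z)=f(x\pm\hbar)\,a_2^{\pm}(z)$, the separation-of-variables step, and the exponential intertwiner $f(x)=c^{-x/\hbar}$ together give exactly the straightforward verification the authors presumably had in mind.
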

\subsection{Algebraic notions.} \label{ss-h algebras}
We briefly recall the notion of $\mathfrak{h}$-algebras\footnote{We use $\mathfrak{h}$ to avoid confusion of $\BC$-algebras in the usual sense.} from \cite{EV}, with $\mathfrak{h}$ being the one-dimensional complex Lie algebra and $\gamma = -\hbar$.

Let $\CV$ denote the category whose objects are $\BC$-graded $\BM$-vector spaces $X = \oplus_{\alpha\in \BC}X[\alpha]$, and morphisms are $\BM$-linear maps which preserve the $\BC$-gradings. If $X[\alpha] \neq 0$, then $\alpha$ is called a {\it weight} of $X$, non-zero vectors in $X[\alpha]$ are of weight $\alpha$, and $X[\alpha]$ is the weight space of weight $\alpha$. Let $\wt(X)$ be the set of weights of $X$.

For $X,Y \in \CV$ define their {\it dynamical tensor product} $X\wtimes Y \in \CV$ as follows. For $\alpha,\beta \in \BC$, let $X[\alpha] \wtimes Y[\beta]$ be the usual tensor product of $\BC$-vector spaces $X[\alpha]\otimes_{\BC} Y[\beta]$ modulo the relation \footnote{This dynamical tensor product and $\Phi \wtimes \Psi$ in Lemma \ref{lem: tensor product difference operators} are slightly different from \cite[\S 3.1]{EV}.}
\begin{equation}  \label{rel: weight tensor}
g(x)v \otimes_{\BC} w = v \otimes_{\BC} g(x+\beta\hbar) w \quad \mathrm{for}\ v \in X[\alpha],\ w \in Y[\beta], \ g(x) \in \BM.
\end{equation}
Let $\wtimes$ denote the image of $\otimes_{\BC}$ under the quotient. $X[\alpha]\wtimes Y[\beta]$ becomes an $\BM$-vector space by setting $g(x) (v \wtimes w) = v \wtimes g(x) w$. For $\gamma \in \BC$, set $(X\wtimes Y)[\gamma]$ to be the direct sum of the $X[\alpha] \wtimes Y[\beta]$ over all such $\alpha,\beta \in \BC$ that $\alpha+\beta = \gamma$. 

Following \cite[\S 4.1]{EV}, an $\Hlie$-{\it algebra} is a unital associative algebra $A$ over $\BC$, endowed with $\BC$-bigrading $A = \oplus_{\alpha,\beta\in \BC} A_{\alpha,\beta}$ which respects the algebra structure, and two $\BC$-algebra embeddings $\mol,\mor: \BM \longrightarrow A_{0,0}$, called the {\it left and right moment maps}, such that for $a \in A_{\alpha,\beta}$ and $g(x) \in \BM$:
$$ \mol(g(x)) a = a \mol(g(x-\alpha\hbar)),\quad \mor(g(x)) a = a \mor(g(x-\beta\hbar)). $$
 A morphism of $\Hlie$-algebras is a $\BC$-algebra homomorphism preserving the moment maps. From two $\Hlie$-algebras $A,B$ we construct their tensor product $A\dt B$ as follows. For $\alpha,\beta,\gamma \in \BC$, let $A_{\alpha,\beta}\dt B_{\beta,\gamma}$ be $A_{\alpha,\beta} \otimes_{\BC} B_{\beta,\gamma}$ modulo the relation 
$$\mor^A(g(x)) a \otimes_{\BC} b = a \otimes_{\BC} \mol^B(g(x)) b \quad \mathrm{for}\ a \in A_{\alpha,\beta},\  b \in B_{\beta,\gamma},\  g(x) \in \BM.$$
$(A\dt B)_{\alpha,\gamma}$ is the direct sum of the $A_{\alpha,\beta} \dt B_{\beta,\gamma}$ over $\beta \in \BC$. Multiplication in $A\dt B$ is induced by $(a\dt b)(a'\dt b') = aa' \dt bb'$. The moment maps are given by ($\dt$ denotes the image of $\otimes_{\BC}$ under the quotient $\otimes_{\BC} \longrightarrow \dt$)
$$\mol^{A\dt B}: g(x) \mapsto \mol^A(g(x)) \dt 1, \quad \mor^{A\dt B}: g(x) \mapsto 1 \dt \mor^B(g(x))\quad \mathrm{for}\ g(x) \in \BM. $$

To $X \in \CV$ is attached an $\Hlie$-algebra $D_X$, a $\BC$-subalgebra of $\End_{\BC}(X)$ as in \cite[\S 4.2]{EV}. For $\alpha,\beta \in \BC$, the subspace $(D_X)_{\alpha,\beta}$ consists of such $\Phi \in \End_{\BC}(X)$ that: 
$$ \Phi(X[\gamma]) \subseteq X[\gamma+\beta-\alpha],\quad  \Phi(g(x) v) = g(x+\beta\hbar) \Phi(v) $$
for $\gamma \in \wt(X),\ v \in X$ and $g(x) \in \BM$.
The moment maps $\mor,\mol$ are defined by: 
$$\mor(g(x)) v = g(x)v,\quad \mol(g(x)) v = g(x+\hbar\alpha) v \quad \mathrm{for}\ v \in X[\alpha],\ g(x) \in \BM.$$
Let $X,Y \in \CV$. For $\Phi \in (D_X)_{\alpha,\beta}$ and $\Psi \in (D_Y)_{\beta,\gamma}$, the $\BC$-linear map
\begin{equation*} 
\Pi: X \otimes_{\BC} Y \longrightarrow X \wtimes Y, \quad v \otimes_{\BC} w \mapsto \Phi(v) \wtimes \Psi(w)
\end{equation*}
respects Relation \eqref{rel: weight tensor}. Indeed, for $w \in Y[\delta]$ and $g(x) \in \BM$:
\begin{align*}
\Pi(g(x) v\otimes_{\BC} w) &= \Phi (g(x)v) \wtimes \Psi(w) = g(x+\beta\hbar) \Phi(v) \wtimes \Psi(w) \\
                                   &= \Phi(v) \wtimes g(x + \beta\hbar + (\delta+\gamma-\beta)\hbar) \Psi(w) 
                                  = \Pi(v \otimes_{\BC} g(x+\delta\hbar) w).
\end{align*}
$\Pi$ induces the $\BC$-linear map $\Phi \wtimes \Psi: X \wtimes Y \longrightarrow X \wtimes Y$ which is easily shown to be in $(D_{X\wtimes Y})_{\alpha,\gamma}$. As in \cite[Lemma 4.3]{EV}:
\begin{lem} \label{lem: tensor product difference operators}
$\Phi \dt \Psi \mapsto \Phi \wtimes \Psi$ extends uniquely to a morphism of $\Hlie$-algebras $\theta_{XY}: D_X \dt D_Y \longrightarrow D_{X\wtimes Y}$.
\end{lem}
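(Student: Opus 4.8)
\emph{Proof proposal.} The strategy is to define $\theta_{XY}$ on the homogeneous elementary tensors, check it is well defined there, extend $\BC$-linearly, and then verify the $\Hlie$-algebra morphism axioms on generators. The key structural remark is that every graded component $(D_X\dt D_Y)_{\alpha,\gamma}$ is the sum over $\beta\in\BC$ of the spaces $(D_X)_{\alpha,\beta}\dt(D_Y)_{\beta,\gamma}$, each of which is a quotient of $(D_X)_{\alpha,\beta}\otimes_{\BC}(D_Y)_{\beta,\gamma}$; hence $D_X\dt D_Y$ is $\BC$-spanned by the elements $\Phi\dt\Psi$ with $\Phi\in(D_X)_{\alpha,\beta}$, $\Psi\in(D_Y)_{\beta,\gamma}$. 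Consequently, once a well-defined $\BC$-linear map with the prescribed values on these elements is produced, it is automatically unique, and it only remains to see that it is multiplicative, unital, and intertwines the moment maps.

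\emph{Well-definedness.} We already know that $\Phi\wtimes\Psi$ is a well-defined element of $(D_{X\wtimes Y})_{\alpha,\gamma}$, so for fixed $\alpha,\beta,\gamma$ we have a $\BC$-bilinear map $(D_X)_{\alpha,\beta}\times(D_Y)_{\beta,\gamma}\to(D_{X\wtimes Y})_{\alpha,\gamma}$. To see that it factors through $(D_X)_{\alpha,\beta}\dt(D_Y)_{\beta,\gamma}$, I would check, for $g(x)\in\BM$, that $(\mor^{D_X}(g)\,\Phi)\wtimes\Psi$ and $\Phi\wtimes(\mol^{D_Y}(g)\,\Psi)$ coincide as operators on $X\wtimes Y$. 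Applying both to $v\wtimes w$ with $v\in X[\rho]$, $w\in Y[\delta]$ and unwinding the definitions of the moment maps of $D_X$ and $D_Y$, this reduces to the instance
\begin{equation*}
\bigl(g(x)\,\Phi(v)\bigr)\wtimes\Psi(w)=\Phi(v)\wtimes g\bigl(x+(\delta+\gamma-\beta)\hbar\bigr)\Psi(w)
\end{equation*}
of relation \eqref{rel: weight tensor}, valid because $\Psi(w)\in Y[\delta+\gamma-\beta]$. Taking the direct sum over $\beta$, then over all $(\alpha,\gamma)$, yields a $\BC$-linear, bigrading-preserving map $\theta_{XY}\colon D_X\dt D_Y\to D_{X\wtimes Y}$.

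\emph{Morphism axioms.} On generators one has $(\Phi\dt\Psi)(\Phi'\dt\Psi')=\Phi\Phi'\dt\Psi\Psi'$, so it suffices to verify $(\Phi\Phi')\wtimes(\Psi\Psi')=(\Phi\wtimes\Psi)(\Phi'\wtimes\Psi')$ in $\End_{\BC}(X\wtimes Y)$; both sides send $v\wtimes w$ to $\Phi(\Phi'(v))\wtimes\Psi(\Psi'(w))$. Since the generators span, $\theta_{XY}$ is multiplicative, and $\theta_{XY}(1\dt 1)=\mathrm{id}_X\wtimes\mathrm{id}_Y=\mathrm{id}_{X\wtimes Y}$. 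For the moment maps, $\theta_{XY}(\mol^{D_X\dt D_Y}(g))=\mol^{D_X}(g)\wtimes\mathrm{id}_Y$, which evaluated on $v\wtimes w$ with $v\in X[\alpha']$, $w\in Y[\beta']$ equals, by relation \eqref{rel: weight tensor}, $g(x+(\alpha'+\beta')\hbar)\cdot(v\wtimes w)=\mol^{D_{X\wtimes Y}}(g)(v\wtimes w)$; the identity $\theta_{XY}\circ\mor^{D_X\dt D_Y}=\mor^{D_{X\wtimes Y}}$ is immediate from the definitions. Thus $\theta_{XY}$ is the desired (unique) morphism of $\Hlie$-algebras. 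The only step demanding genuine care is the well-definedness above, where the dynamical shift produced by $\mol^{D_Y}$ must be threaded correctly through relation \eqref{rel: weight tensor}; everything else is bookkeeping with weight shifts. It is also prudent to confirm at the outset that ordinary composition in $\End_{\BC}(X\wtimes Y)$ keeps one inside $D_{X\wtimes Y}$ with the expected effect on bidegrees, so that $\theta_{XY}$ really lands in the $\Hlie$-algebra $D_{X\wtimes Y}$ and respects its bigrading — this is part of the construction of $D_{(-)}$ recalled above.
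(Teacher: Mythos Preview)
Your proof is correct and follows the approach the paper indicates: the paper carries out the key step---showing that $\Pi$ respects relation \eqref{rel: weight tensor}, hence that $\Phi\wtimes\Psi$ is a well-defined element of $(D_{X\wtimes Y})_{\alpha,\gamma}$---just before the lemma and then cites \cite[Lemma 4.3]{EV} for the remaining routine verifications, which are exactly the well-definedness modulo the $\dt$-relation, multiplicativity, unitality, and compatibility with the moment maps that you have written out.
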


\subsection{Elliptic quantum group.}  \label{ss-elliptic}
Fix a complex number $\tau \in \BC$ with $\mathrm{Im}(\tau) > 0$. Define the Jacobi theta function 
$$ \theta(z) := - \sum_{j=-\infty}^{\infty} \exp\left(\BI \pi (j+\frac{1}{2})^2\tau + 2\BI\pi (j+\frac{1}{2})(z+\frac{1}{2})\right), \quad \BI = \sqrt{-1}.  $$
It is an entire function on $z \in \BC$ with zeros lying on the lattice $\BZ + \BZ \tau$ and 
$$\theta(z+1) = -\theta(z),\quad \theta(z+\tau) = -e^{-\BI\pi \tau-2\BI \pi z} \theta(z), \quad \theta(-z) = -\theta(z).  $$
{\it Assume from now on that $\BZ + \BZ\tau$ and $\hbar\BZ$ only intersect at $0$. Unless otherwise stated (in the appendix), $\otimes$ denotes the ordinary tensor product $\otimes_{\BM}$ of $\BM$-vector spaces and $\BM$-linear maps.  }

Let $\BV \in \CV$ be such that $\BV = \BM v_+ \oplus \BM v_-$ with $v_{\pm}$ being of weight $\pm 1$ respectively. Define the $\End_{\BM}(\BV^{\otimes 2})$-valued meromorphic function on $z \in \BC$ by its matrix with respect to the $\BM$-basis $(v_+\otimes v_+, v_+\otimes v_-, v_-\otimes v_+, v_-\otimes v_-)$:
\begin{equation}  \label{equ: elliptic R matrix gl(2)}
\BR(z;x) = \begin{pmatrix}
1 & 0 & 0 & 0 \\
0 & \frac{\theta(z)\theta(x+\hbar)\theta(x-\hbar)}{\theta(z+\hbar)\theta(x)^2} & \frac{\theta(z+x)\theta(\hbar)}{\theta(z+\hbar)\theta(x)} & 0 \\
0 & -\frac{\theta(z-x)\theta(\hbar)}{\theta(z+\hbar)\theta(x)} & \frac{\theta(z)}{\theta(z+\hbar)} & 0 \\
0 & 0 & 0 & 1
\end{pmatrix}.
\end{equation}
It is the matrix $R^+(z,x)$ in \cite[(93)]{EF}. Applying a suitable gauge transformation in \cite[Lemma 8.1]{EF}, one obtains $R(x,z,-\frac{\hbar}{2},\tau)$ in \cite[\S 1]{FV1}.

$\BR(z;x)$ satisfies the {\it quantum dynamical Yang--Baxter equation} in \cite{EV}:
\begin{multline}  \label{equ: YBE elliptic R gl(2)}
\quad \quad \BR^{12}(z-w;x +\hbar h^{(3)}) \BR^{13}(z;x) \BR^{23}(w;x+\hbar h^{(1)}) \\
= \BR^{23}(w;x)\BR^{13}(z;x+\hbar h^{(2)})\BR^{12}(z-w;x) \in \End_{\BM}(\BV^{\otimes 3}).
\end{multline}
Here $\BR^{12}(z;x+\hbar h^{(3)})$ means that if $w_1,w_2 \in \BV$ and $w_3 \in \BV[\alpha]$, then 
$$ \BR^{12}(z;x+\hbar h^{(3)}) (w_1\otimes w_2\otimes w_3) = \BR(z;x+\hbar\alpha)\left(w_1\otimes w_2\right) \otimes w_3. $$
The other symbols have a similar meaning.

The {\it elliptic quantum group} $\CE = \CE_{\tau,\hbar}(\mathfrak{gl}_2)$ is the operator algebra in \cite[\S 3]{FV1}, or equivalently the dynamical quantum group associated to $\BR(z;x)$ in \cite[\S 4.4]{EV}. It is an $\Hlie$-algebra generated by the $L_{ij}(z) \in \CE_{i1,j1}$ with $i,j \in \{ \pm\}$ subject to the relation
\begin{equation}  \label{rel: elliptic RLL}
\mol\left(\BR^{12}(z-w;x)\right) L^{13}(z) L^{23}(w) = L^{23}(w)L^{13}(z)\mor\left(\BR^{12}(z-w;x)\right). 
\end{equation}
By \cite[\S 1]{FV1} and \cite[Proposition 4.2]{EV}, there is an $\Hlie$-algebra morphism 
\begin{equation} \label{rel: coproduct}
\Delta: \CE \longrightarrow \CE \dt \CE,\quad L_{ij}(z) \mapsto \sum_{k=\pm} L_{ik}(z) \dt L_{kj}(z)
\end{equation}
which is co-associative $(1\dt \Delta) \Delta = (\Delta \dt 1)\Delta$. For $u \in \BC$, 
\begin{equation} \label{rel: spectral automorphism}
\Psi_u: \CE \longrightarrow \CE, \quad L_{ij}(z) \mapsto L_{ij}(z+u\hbar)
\end{equation}
extends uniquely to an automorphism of $\Hlie$-algebra.\footnote{Strictly speaking $\CE$ is not well-defined as an $\mathfrak{h}$-algebra. Nevertheless, we are only concerned with representations and Equations \eqref{rel: elliptic RLL}--\eqref{rel: spectral automorphism} make sense. Equations \eqref{rel: elliptic RLL}, \eqref{rel: RLL explicit} and \cite[(4.4.3)]{EV} are coherent as $R_{pq}^{ij}(z;x) = R_{pq}^{ij}(z;x+p\hbar+q\hbar)$ for $i,j,p,q \in \{\pm\}$. }

A {\it representation} of $\CE$ on $X \in \CV$ is an $\Hlie$-algebra morphism $\rho: \CE \longrightarrow D_X$ which depends meromorphically on $z \in \BC$. More precisely, it consists of four operators $L_{ij}^X(z) \in (D_X)_{i1,j1}$ for $i,j \in \{ \pm\}$ with parameter $z \in \BC$ such that: with respect to an $\BM$-basis of $X$, for any $v \in X$, the coefficients of the $L_{ij}^X(z) v$ are meromorphic functions on $z,x$; Equation \eqref{rel: elliptic RLL} holds in $D_X$ with $\mol,\mor$ being moment maps in $D_X$. We also call $X$ an $\CE$-module.

If $(\rho,X),(\sigma,Y)$ are representations of $\CE$, then Equation \eqref{rel: coproduct} together with Lemma \ref{lem: tensor product difference operators} endows $X\wtimes Y$ with a representation $\theta_{XY} \circ (\rho \dt \sigma) \circ \Delta$.

Let $(\rho,X)$ be a representation. Fix a weight basis $(w_{\alpha})$ of $X$. For $i,j \in \{ \pm\}$, define $L_{ij}^X(z;x) \in \End_{\BM}(X)$ by $L_{ij}^X(z;x) w_{\alpha} = L_{ij}^X(z) w_{\alpha}$ for all $\alpha$. Set 
$$L^X(z;x) := \sum_{i,j \in \{ \pm\}} E_{ij} \otimes L_{ij}^X(z;x) = \begin{pmatrix}
L_{++}^X(z;x) & L_{+-}^X(z;x) \\
L_{-+}^X(z;x) & L_{--}^X(z;x)
\end{pmatrix} \in \End_{\BM}(\BV \otimes X)$$
where $E_{ij} \in \End_{\BM}(\BV)$ is $v_k \mapsto \delta_{jk}v_i$. That $\rho$ is an $\Hlie$-algebra morphism is equivalent to the following RLL relation \cite[Proposition 4.5]{EV}:
\begin{multline} \label{rel: RLL dynamical}
\quad \quad \BR^{12}(z-w;x +\hbar h^{(3)}) L^{X,13}(z;x) L^{X,23}(w;x+\hbar h^{(1)})\\
 = L^{X,23}(w;x)\,L^{X,13}(z;x+\hbar h^{(2)})\,\BR^{12}(z-w;x) \in \End_{\BM}(\BV\otimes \BV \otimes X).
\end{multline} 
For $i,j,m,n \in \{ \pm\}$, let $R_{mn}^{ij}(z;x)$ be the coefficient of $v_m \otimes v_n$ in $\BR(z;x)(v_i\otimes v_j)$. Then Equation \eqref{rel: RLL dynamical} means the following identities for all $i,j,m,n \in \{ \pm\}$, 
\begin{multline} \label{rel: RLL explicit}
\quad \quad \sum_{p,q}  R^{pq}_{mn}(z-w;x+\hbar h) L_{pi}^X(z;x) L_{qj}^X(w;x+i\hbar) \\ 
= \sum_{p,q} L_{nq}^X(w;x)L_{mp}^X(z;x+q\hbar)  R_{pq}^{ij}(z-w;x) \in \End_{\BM}(X).
\end{multline}

By Equation \eqref{equ: YBE elliptic R gl(2)}, $L^{\BV}(z;x) := \BR(z;x)$ affords a representation of $\CE$ on $\BV$.

\subsection{Asymptotic representations.} Let $\ell \in \BC$ and $\CW^{\ell} = \oplus_{j=0}^{\infty} \BM w_j$ with $w_j$ being of weight $\ell-2j$ so that $\CW^{\ell} \in \CV$. Let $L^{\ell}(z;x) \in \End_{\BM} (\BV \otimes \CW^{\ell})$ be the matrix 
\begin{equation*}
\begin{pmatrix}
\sum\limits_{j=0}^{\infty} E_{jj} \frac{\theta(z+(\ell-j+1)\hbar)\theta(x+(\ell-j+1)\hbar)\theta(x-j\hbar)}{\theta(x)\theta(x+(\ell-2j+1)\hbar)} & \sum\limits_{j=0}^{\infty} E_{j+1,j}\frac{\theta(z+x+(\ell-j)\hbar)\theta((\ell-j)\hbar)}{\theta(x+(\ell-2j-1)\hbar)} \\
-\sum\limits_{j=1}^{\infty} E_{j-1,j}\frac{\theta(z-x+j\hbar)\theta(j\hbar)}{\theta(x)} & \sum\limits_{j=0}^{\infty} E_{jj} \theta(z+(j+1)\hbar) 
\end{pmatrix}.
\end{equation*}
Here $E_{ij} \in \End_{\BM}( \CW^{\ell})$ is $w_k \mapsto \delta_{jk}w_i$. 
\begin{prop} \label{prop: finit-dim representations}
$L^{\ell}(z;x)$ and the basis $(w_j)$ define a representation of $\CE$ on $\CW^{\ell}$.
\end{prop}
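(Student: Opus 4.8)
The plan is the following. By the description of $\CE$-modules in Section~\ref{ss-elliptic} it suffices to produce operators $L^{\ell}_{ij}(z)\in(D_{\CW^{\ell}})_{i1,j1}$, $i,j\in\{\pm\}$, depending meromorphically on $z$, and to verify the RLL relation \eqref{rel: RLL explicit} (equivalently \eqref{rel: RLL dynamical}) in $D_{\CW^{\ell}}$. I would let $L^{\ell}_{ij}(z)$ be the difference operator on $\CW^{\ell}=\bigoplus_j\BM w_j$ built, as in \eqref{def: difference operator}, from the $(i,j)$-entry of the displayed matrix $L^{\ell}(z;x)$ together with the moment-map twist dictated by the bidegree $(i1,j1)$. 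Two points are then immediate: the entries are ratios of products of $\theta(z+\,\cdot\,)$, so the $z$-dependence is meromorphic; and the index patterns $E_{jj}$, $E_{j+1,j}$, $E_{j-1,j}$ show that $L^{\ell}_{\pm\pm}(z)$ preserve weights while $L^{\ell}_{+-}(z)$ (resp.\ $L^{\ell}_{-+}(z)$) lowers (resp.\ raises) weights by $2$, i.e.\ $L^{\ell}_{ij}(z)\in(D_{\CW^{\ell}})_{i1,j1}$. Hence the whole content of the Proposition is the RLL relation.

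The crucial simplification is that each $L^{\ell}_{ij}(z)$ moves the index of a basis vector by at most one. Applied to $w_k$, the relation \eqref{rel: RLL explicit} for a fixed quadruple $(i,j,m,n)$ therefore lands in $\bigoplus_{|k'-k|\le2}\BM w_{k'}$, and is equivalent to the finite set of scalar identities obtained by comparing the coefficient of each $w_{k'}$. After clearing the $\theta$-denominators coming from $\BR$ (see \eqref{equ: elliptic R matrix gl(2)}) and from the entries of $L^{\ell}(z;x)$, each such identity is an equality of two finite sums of products of values of $\theta$ at arguments affine in $z,w,x,\hbar$ and in $(\ell-k)\hbar$. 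Most quadruples are trivial: if one of the coefficients $R^{pq}_{mn}$ or $R^{ij}_{pq}$ vanishes the corresponding sum is short, the cases $(+,+,+,+)$ and $(-,-,-,-)$ reduce to the commutativity of explicit diagonal difference operators, and the symmetry of \eqref{rel: RLL dynamical} under $z\leftrightarrow w$ pairs up the rest, leaving a short list of genuinely off-diagonal identities.

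I would finish in either of two ways. \textbf{(a)} \emph{Direct check.} Each remaining scalar identity, after collecting terms, should reduce to the three-term theta relation
\[
\theta(u+x)\theta(u-x)\theta(v+y)\theta(v-y)-\theta(u+y)\theta(u-y)\theta(v+x)\theta(v-x)=\theta(u+v)\theta(u-v)\theta(x+y)\theta(x-y)
\]
with $u,v,x,y$ specialised to affine combinations of $z-w$, $x$, $\hbar$, $(\ell-k)\hbar$ --- the same identity behind \eqref{equ: YBE elliptic R gl(2)}. \textbf{(b)} \emph{Reduction to finite dimensions.} For $\ell=M\in\BZ_{\ge0}$ one checks, using $\theta(0)=0$ in the $(+,-)$-entry, that $\bigoplus_{j=0}^{M}\BM w_j$ is $L^{M}$-invariant and that the restriction is (up to a gauge, cf.\ \cite{FV1}) the $L$-operator of the $(M+1)$-dimensional evaluation representation of $\CE_{\tau,\hbar}(\mathfrak{gl}_2)$, for which \eqref{rel: RLL explicit} holds; this gives the scalar identities above for every $\ell\in\BZ_{\ge2}$. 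Since $\ell$ enters every theta factor only through $+\ell\hbar$, for fixed $z,w,x$ and fixed basis vectors the difference of the two sides of a scalar identity extends to a theta function of finite degree in $\ell$ for the lattice $\hbar^{-1}(\BZ+\BZ\tau)$; by the standing assumption $\hbar\BZ\cap(\BZ+\BZ\tau)=\{0\}$ the zeros $\ell\in\BZ_{\ge2}$ are infinitely many distinct points of the corresponding elliptic curve, so it vanishes identically, and \eqref{rel: RLL explicit} holds for all $\ell\in\BC$.

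In either route the obstacle is bookkeeping rather than ideas: one must track the nested dynamical shifts $x\mapsto x\pm\hbar$, $x\mapsto x+\hbar h^{(\bullet)}$ and the $\ell$-dependent shift $x\mapsto x+(\ell-2j+1)\hbar$ occurring in \eqref{rel: RLL explicit} so that the theta-function arguments match precisely; once the substitutions are organised, the verification is mechanical.
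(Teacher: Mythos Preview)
Your route (b) is precisely the paper's approach: verify the RLL identity at integers $\ell=n$ by identifying the restriction of $L^{\ell}$ to $\bigoplus_{j=0}^{n}\BM w_j$ with the $L$-operator of a known finite-dimensional module (the paper uses Konno's modules \cite{K1} after an explicit basis change $v_j\mapsto v_j'$; the evaluation modules of \cite{FV1} you cite would serve equally well), and then extend to all $\ell\in\BC$ by an analytic-continuation argument in $\ell$.

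The one step you understate is the assertion that ``the difference of the two sides extends to a theta function of finite degree in $\ell$''. For this to hold, both sides of \eqref{rel: RLL explicit} applied to $w_k$ must have the \emph{same} quasi-periodicity in $\ell$---same degree \emph{and} same multiplier under $\ell\mapsto\ell+\hbar^{-1}\tau$---before the finiteness of zeros on the elliptic curve can be invoked. This is not automatic from ``$\ell$ enters every theta factor only through $+\ell\hbar$'': on the right-hand side $\ell$ enters only through the first row of $L^{\ell}$, whereas on the left-hand side it also enters through the dynamical shift $x\mapsto x+\hbar h$ in $R^{pq}_{mn}(z-w;x+\hbar h)$, since $h|_{w_{k'}}=\ell-2k'$. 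A priori the two sides could have different multipliers, and then their difference would not be a section of any line bundle on the curve. The paper makes this verification explicit: it introduces the equivalence $f(\ell)\sim g(\ell)$ (same transformation law under $\ell\mapsto\ell+\hbar^{-1}$ and $\ell\mapsto\ell+\hbar^{-1}\tau$) and checks case by case on $(m,n)\in\{\pm\}^2$ that both sides are $\sim$-equivalent; for instance when $(m,n)=(+,-)$ one must see that the $\ell$-dependent factors contributed by $R^{+-}_{+-}$ and $R^{-+}_{+-}$ are themselves $\sim 1$, leaving both sides $\sim\theta(\ell\hbar+z+\hbar)$. Only once $L(\ell)\sim R(\ell)$ is established does the vanishing at all integers $n>k+2$ force $L(\ell)=R(\ell)$. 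This short case analysis is the actual content of the continuation step, not merely bookkeeping; with it in place your argument is complete and coincides with the paper's.
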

\begin{proof}
Assume $n \in \BZ_{>0}$. Let $V^{n} = \oplus_{j=0}^{n} \BM v_j$ with $v_j$ being of weight $n-2j$. Under the correspondence $[u] \mapsto \theta(\hbar u), r \mapsto \hbar^{-1}, u \mapsto \hbar^{-1} z, P=s \mapsto \hbar^{-1} x$, the matrix $R^{+}(u,s)$ in \cite[(2.18)]{K1} is identified with $\BR(z;x)$. From \cite[(2.19)--(2.20) \& Theorem 4.14]{K1} (setting $\varphi_n(u-v) = -1$ in $\widehat{\pi}_{n,q^{-n-1}}$) one obtains a representation $\rho^n$ of $\CE$ on $V^n$ whose matrix with respect to the basis $(v_j)_{0\leq j \leq n}$ is 
$$\begin{pmatrix}
\frac{\theta(z+\frac{h+n+2}{2}\hbar) \theta(x+\frac{h-n}{2}\hbar)\theta(x+\frac{h+n+2}{2}\hbar)}{\theta(x)\theta(x+(h+1)\hbar)} & S^- \frac{\theta(z+x+\frac{h+n}{2}\hbar)\theta(\frac{n-h+2}{2}\hbar)}{\theta(x+(h-1)\hbar)} \\
-S^+ \frac{\theta(z-x+\frac{n-h}{2}\hbar)\theta(\frac{n+h+2}{2}\hbar)}{\theta(x)} & \theta(z+\frac{n-h+2}{2}\hbar)
\end{pmatrix} \in \End_{\BM}(\BV \otimes V^n).$$
Here $h (v_j) = (n-2j)v_j, S^{\pm}(v_j) = v_{j\mp 1}$ and $v_{-1}=v_{n+1} = 0$. Set 
$$v_0' := v_0,\quad v_j' := \frac{\theta(\hbar)\theta(2\hbar) \cdots \theta(j\hbar)}{\theta(n\hbar)\theta((n-1)\hbar) \cdots \theta((n-j+1)\hbar)} v_j \quad \mathrm{for}\ 1\leq j \leq n. $$
Then $(v_j')$ forms another basis of $V^n$. Let $L'^{n}(z;x)$ be the matrix of $\rho^n$ with respect to $(v_j')$. Let us view $V^n$ as a subspace of $\CW^n$ by $v_j' = w_j$ for $0\leq j \leq n$. 
\begin{itemize}
\item[(I)] $L_{pq}'^{n}(z;x)|_{V^n} = L_{pq}^n(z;x)|_{V^n}$ for $p,q \in \{\pm\}$ with $(p,q) \neq (+,-)$.
\item[(II)] $L_{+-}'^{n}(z;x) w_j = L_{+-}^n(z;x)w_j$ for $0\leq j < n$.
\end{itemize}

Our goal is prove Equation \eqref{rel: RLL explicit} with $L^X(z;x) = L^{\ell}(z;x)$ and $X = \CW^{\ell}$. Fix $i,j,m,n\in \{ \pm\}$ and $k \in \BZ_{\geq 0}$. Let $L$ (resp. $R$) be the left-hand side (resp. right-hand side) of Equation \eqref{rel: RLL explicit} applied to $w_k$. We are reduced to prove $L = R$. Notice that $L,R$ are linear combinations of $w_k,w_{k\pm 1}, w_{k\pm 2} \in V^{k+2} \subset \CW^{\ell}$ whose coefficients are meromorphic functions on $z,w,x,\ell \in \BC$. (Set $w_{-1} = w_{-2} = 0$.) We shall fix $z,w,x \in \BC$ to be generic and view $L = L(\ell)$ and $R = R(\ell)$ as vector-valued meromorphic functions on $\ell$. By (I)--(II), $L(n) = R(n)$ for $n > k + 2$.

For two vector-valued meromorphic functions $f(\ell),g(\ell)$ on $\ell$, let us write $f(\ell) \sim g(\ell)$ if there exist $N \in \BZ_{\geq 0}$ and $a \in \BC$ such that for $H = f,g$:
$$ H(\ell + \hbar^{-1}) = (-1)^N H(\ell),\quad H(\ell + \hbar^{-1}\tau) = (-1)^N e^{-\BI\pi(N\tau+2N\ell \hbar + a)} H(\ell). $$
Assume $L(\ell) \sim R(\ell)$. The coefficients of $L(\ell)-R(\ell)$ are products of the $\theta(\ell \hbar + b)^{\pm 1}$ for $b \in \BC$ with constant functions. Since $\theta(\ell\hbar + b)$ cannot have zeroes at large enough integers, $L(\ell) = R(\ell)$. We are led to prove $L(\ell) \sim R(\ell)$.

If $m=n=-$, then $R,L$ are independent of $\ell$ as is so the second row of the square matrix $L^{\ell}(z;x)$. If $m = n = +$, then $L(\ell),R(\ell) \sim \theta(\ell \hbar + z + \hbar) \theta(\ell \hbar + w + \hbar)$, because the first row of $L^{\ell}(z;x)\sim\theta(\ell \hbar + z + \hbar)$.

For $(m,n)=(+,-)$, we have $L(\ell) = R_{+-}^{+-}L_{+i}L_{-j} + R_{+-}^{-+} L_{-i}L_{+j}$ and
\begin{align*}
R_{+-}^{+-}L_{+i}L_{-j} &\sim \frac{\theta(\ell\hbar+x+(1-2i-2j)\hbar)\theta(\ell\hbar+x-(1+2i+2j)\hbar)}{\theta(\ell\hbar + x - (2i+2j)\hbar)^2} \theta(\ell\hbar+z+\hbar)   \\
& \sim \theta(\ell \hbar + z + \hbar), \\
R_{+-}^{-+} L_{-i}L_{+j} &\sim \frac{\theta(\ell \hbar+z-w+x-(2i+2j)\hbar)}{\theta(\ell\hbar+x-(2i+2j)\hbar)} \theta(\ell\hbar+w+\hbar) \sim \theta(\ell\hbar + z + \hbar).
\end{align*} 
Here for simplicity $i \in \{\pm\}$ is taken as $i1 \in \{\pm 1\}$ and the factors irrelevant to $\ell$ have been omitted. $R(\ell)$ is a sum of the $L_{-q}L_{+p}R_{pq}^{ij}$. Since $L_{-q}, R_{pq}^{ij} \sim 1$ and $L_{+p} \sim \theta(\ell \hbar + z + \hbar)$, we have $R(\ell) \sim \theta(\ell \hbar + z + \hbar) \sim L(\ell)$.

For $(m,n) = (-+)$, similarly $L(\ell) \sim \theta(\ell\hbar + w + \hbar) \sim R(\ell)$.
\end{proof}
One could have deduced the $\CW^{\ell}$ from the evaluation modules in \cite[\S 4]{FV1} by suitable gauge transformations. We took an indirect approach by analyzing the $\ell$-dependence of matrix coefficients. This is to be compared with \cite[Proposition 4.5]{HJ} and \cite[Lemma 5.1]{Z2} where matrix coefficients are Laurent polynomials in $\ell := q^k$ in the case of quantum affine (super)algebras.
 \begin{rem}  \label{rem: simplicity asymptotic}
 If $\ell \notin \BZ_{\geq 0} + \hbar^{-1}(\BZ + \BZ\tau)$, then $\CW^{\ell}$ is simple.
 If $\ell \in l + \hbar^{-1}(\BZ + \BZ\tau)$ with $l \in \BZ_{\geq 0}$, then $V^{\ell} := \oplus_{j=0}^l\BM w_j$ is a sub-$\CE$-module of $\CW^{\ell}$ and it is contained in any non-zero sub-$\CE$-module of $\CW^{\ell}$. In other words, $V^{\ell}$ is the simple socle of $\CW^{\ell}$.
 \end{rem}
 \begin{defi}
 Let $\ell, u \in \BC$. The {\it asymptotic representation} $\CW^{\ell,u}$ is the pullback of the representation $\CW^{\ell}$ in Proposition \ref{prop: finit-dim representations} by $\Psi_u$ in Equation \eqref{rel: spectral automorphism}. $\ell$ is called the {\it spin parameter} and $u$ the {\it spectral parameter}.
 \end{defi}
 
\section{Category $\BGG$ and $q$-characters}\label{s-BGG}
We introduce a tensor category $\BGG$ of representations of $\CE$ containing all the $\CW^{\ell,u}$, and study its Grothendieck ring, which turns out to be commutative. 

Let $X$ be an $\CE$-module. For $\alpha \in \wt(X)$ choose an $\BM$-basis $(v_j^{\alpha})_{1\leq j \leq r}$ of $X[\alpha]$. For $1\leq i,j \leq r$ let $a_{ij}^{\alpha}(z;x)$ be the coefficient of $v_i^{\alpha}$ in $L_{--}^X(z) v_j^{\alpha}$; these are meromorphic functions on $z,x \in \BC$ by Section \ref{ss-elliptic}. We say that $L_{--}^X(z)$ is invertible for generic $z \in \BC$ if $\det(a_{ij}^{\alpha}(z;x))_{1\leq i,j \leq r}$ is a non-zero meromorphic function for all $\alpha \in \wt(X)$.  As a two-by-two matrix over $D_X$, $L^X(z)$ admits a {\it Gauss decomposition}:
\begin{align*}  
L^X(z) := \begin{pmatrix}
L_{++}^X(z) & L_{+-}^X(z) \\
L_{-+}^X(z) & L_{--}^X(z)
\end{pmatrix} =  \begin{pmatrix}
1 & F^X(z) \\
0 & 1
\end{pmatrix} \begin{pmatrix}
K_+^X(z) & 0 \\
0 & K_-^X(z) 
\end{pmatrix} \begin{pmatrix}
1 & 0 \\
E^X(z) & 1
\end{pmatrix}.
\end{align*}
By definition, we have $K_-^X(z) = L_{--}^X(z) \in (D_X)_{-1,-1}$ and 
$$ K_+^X(z) \in (D_X)_{1,1},  \quad E^X(z) \in (D_{X})_{0,2},\quad F^X(z) \in (D_X)_{2,0}.  $$
It follows that $(X,K_+^X(z),K_-^X(z))|_{X[\alpha]} \in \CF$ for $\alpha \in \wt(X)$. 

\begin{defi} \label{def: category BGG}
A representation $X$ of $\CE$ is said to be in category $\BGG$ if:
\begin{itemize}
\item[(1)] the weight spaces $X[\alpha]$ are finite-dimensional over $\BM$;
\item[(2)] there exist $\alpha_1,\alpha_2,\cdots,\alpha_r \in \BC$ such that $\wt(X)\subseteq \cup_{j=1}^r(\alpha_j + 2\BZ_{\leq 0})$;
\item[(3)] $L_{--}^X(z)$ is invertible for generic $z \in \BC$;
\item[(4)] for $\alpha \in \wt(X)$ we have $(X,K_+^X(z),K_-^X(z))|_{X[\alpha]} \in \CFm$.
\end{itemize}
A morphism of two representations $X,Y$ in $\BGG$ is an $\BM$-linear map $\Phi: X \longrightarrow Y$ such that $\Phi L_{ij}^X(z) = L_{ij}^Y(z) \Phi$ for all $i,j \in \{ \pm\}$. 
\end{defi}
When there is no confusion, we drop the superscript $X$ from $K_{\pm}^X, E^X,F^X, L_{ij}^X$.
\begin{lem}
$\BGG$ is an abelian category.
\end{lem}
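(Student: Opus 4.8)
The plan is to realise $\BGG$ as a full subcategory of the category $\mathcal{C}$ of all representations of $\CE$ on objects of $\CV$ (morphisms being the $\BM$-linear maps intertwining the $L_{ij}(z)$), to note that $\mathcal{C}$ is abelian, and to check that $\BGG$ is closed in $\mathcal{C}$ under finite direct sums, kernels and cokernels. Granting this, the standard fact that a full subcategory of an abelian category closed under finite biproducts, kernels and cokernels is itself abelian applies: $\BGG$ becomes abelian with biproducts and (co)kernels computed as in $\mathcal{C}$, and fullness together with the closure properties forces a morphism of $\BGG$ to be monic (resp.\ epic) exactly when it is injective (resp.\ surjective), so the remaining axioms are inherited from $\mathcal{C}$. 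That $\mathcal{C}$ is abelian rests on: a morphism $\Phi\colon X\to Y$ respects the weight decomposition (morphisms of $\CE$-modules do), so $\ker\Phi\subseteq X$ and $\mathrm{im}\,\Phi\subseteq Y$ lie in $\CV$; they are sub-$\CE$-modules since $\Phi$ intertwines the $L_{ij}$, whence $\ker\Phi$ and $Y/\mathrm{im}\,\Phi$ inherit $\CE$-module structures with the required universal properties; and the direct sum carries the diagonal action, Equation \eqref{rel: RLL explicit} being inherited from the summands.

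Closure of $\BGG$ under direct sums is then immediate: $(X\oplus Y)[\alpha]=X[\alpha]\oplus Y[\alpha]$ gives (1) and (2); $L_{--}^{X\oplus Y}(z)$ and the Gauss decomposition of $L^{X\oplus Y}(z)$ are block diagonal, so (3) holds, and $(X\oplus Y,K_+^{X\oplus Y}(z),K_-^{X\oplus Y}(z))|_{(X\oplus Y)[\alpha]}$ is a direct sum in $\CF$ of two objects of $\CFm$, hence in $\CFm$, which is (4).

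The substance is closure under kernels and cokernels. Let $\Phi\colon X\to Y$ be a morphism in $\BGG$ and put $N=\ker\Phi\subseteq X$, $Q=Y/\mathrm{im}\,\Phi$. Conditions (1) and (2) are clear since $N[\alpha]\subseteq X[\alpha]$, $Q[\alpha]$ is a quotient of $Y[\alpha]$, $\wt(N)\subseteq\wt(X)$ and $\wt(Q)\subseteq\wt(Y)$. For (3): $N[\alpha]$ is stable under $L_{--}^X(z)$ and $\mathrm{im}\,\Phi[\alpha]$ under $L_{--}^Y(z)$ (these operators preserve weights, and the submodules are invariant), so after choosing an $\BM$-basis of $N[\alpha]$, resp.\ of $\mathrm{im}\,\Phi[\alpha]$, and completing it, the matrix of $L_{--}^X(z)$ on $X[\alpha]$, resp.\ of $L_{--}^Y(z)$ on $Y[\alpha]$, becomes block triangular; its determinant, nonzero by (3) for $X$, resp.\ for $Y$, factors as the determinant of the block for $N$, resp.\ for $Q$, times that of the complementary block, so each factor is a nonzero meromorphic function. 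For (4): since $L_{--}^X(z)$ is invertible for generic $z$ and preserves $N$, so is $L_{--}^X(z)^{-1}$; hence $N$ is stable under every entry of the Gauss decomposition of $L^X(z)$, $K_\pm^N(z)=K_\pm^X(z)|_N$, and $(N,K_+^N(z),K_-^N(z))|_{N[\alpha]}$ is a subobject in $\CF$ of $(X,K_+^X(z),K_-^X(z))|_{X[\alpha]}\in\CFm$. Dually, $\mathrm{im}\,\Phi[\alpha]$ is stable under the Gauss decomposition of $L^Y(z)$, which therefore descends to $Q[\alpha]$ and presents $(Q,K_+^Q(z),K_-^Q(z))|_{Q[\alpha]}$ as a quotient object in $\CF$ of $(Y,K_+^Y(z),K_-^Y(z))|_{Y[\alpha]}\in\CFm$. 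By the description of $\CFm$ through Jordan-H\"{o}lder factors (Definition~\ref{def: meromorphic eigenvalue}) and the fact that the factors of a subobject or quotient object in $\CF$ form a sub-multiset of those of the original object, $\CFm$ is closed under subobjects and quotient objects in $\CF$; hence both objects above lie in $\CFm$, which is (4) for $N$ and for $Q$. Thus $N,Q\in\BGG$, and $N\hookrightarrow X$, $Y\twoheadrightarrow Q$ are the kernel and cokernel of $\Phi$ in $\BGG$.

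I expect the only genuine obstacle to be the bookkeeping of the previous paragraph: conditions (1) and (2) are trivially hereditary, but (3) and above all (4) are not, and verifying that they survive passage to sub- and quotient modules is precisely what the categories $\CF\supset\CFm$ and Lemma~\ref{lem: abelian category meromorphic eigenvalue} were introduced for. The crux is that extracting the Cartan part $K_\pm(z)$ of $L(z)$ via the Gauss decomposition commutes with restriction to and projection from sub-$\CE$-modules — which uses exactly the generic invertibility of $L_{--}(z)$ from condition (3) — and that the class $\CFm$ of difference operators with meromorphic eigenvalues is stable under the resulting sub- and quotient operations.
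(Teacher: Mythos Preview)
Your proposal is correct and follows the same approach the paper intends: the paper's proof is the single line ``This follows from Lemma~\ref{lem: abelian category meromorphic eigenvalue}'', and your argument is precisely the expansion of that line, with the crux being that condition~(4) passes to sub- and quotient modules because $\CFm$ is closed under subobjects and quotients in $\CF$. You have simply made explicit the bookkeeping (conditions (1)--(3), compatibility of Gauss decomposition with sub/quotient) that the paper leaves to the reader.
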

\begin{proof}
This follows from Lemma \ref{lem: abelian category meromorphic eigenvalue}.
\end{proof}
\begin{prop}  \label{prop: asymptotic representation Gauss decomposition}
The representation $\CW^{\ell}$ in Proposition \ref{prop: finit-dim representations} is in category $\BGG$.
\end{prop}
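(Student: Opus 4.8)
The plan is to verify the four conditions of Definition~\ref{def: category BGG} in turn, working directly from the explicit matrix $L^{\ell}(z;x)$ of Proposition~\ref{prop: finit-dim representations}.

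Conditions~(1) and~(2) are immediate: every weight space $\CW^{\ell}[\alpha]$ is either $\BM w_j$ (when $\alpha=\ell-2j$ with $j\in\BZ_{\geq 0}$) or $0$, so weight spaces are one-dimensional over $\BM$, and $\wt(\CW^{\ell})=\ell+2\BZ_{\leq 0}$, so one may take $r=1$ and $\alpha_1=\ell$. For condition~(3) I read off from the lower-right entry of $L^{\ell}(z;x)$ that $L_{--}^{\ell}(z)$ preserves each weight space and acts on $\BM w_j$ by multiplication by $\theta(z+(j+1)\hbar)$ (together with the shift $x\mapsto x-\hbar$ forced by $L_{--}^{\ell}(z)\in(D_{\CW^{\ell}})_{-1,-1}$); since $\theta(z+(j+1)\hbar)$ is a nonzero meromorphic function of $z$, condition~(3) holds, and in particular the Gauss decomposition of $L^{\ell}(z)$ exists, with $K_-^{\ell}(z)=L_{--}^{\ell}(z)$ and $K_+^{\ell}(z)=L_{++}^{\ell}(z)-L_{+-}^{\ell}(z)\,L_{--}^{\ell}(z)^{-1}\,L_{-+}^{\ell}(z)$.

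Because all weight spaces are one-dimensional, condition~(4) reduces to showing that $K_{\pm}^{\ell}(z)$ act on each $\BM w_j$ by multiplication by a nonzero meromorphic function of $z$ with no dependence on $x$; for then $(\CW^{\ell},K_+^{\ell}(z),K_-^{\ell}(z))|_{\BM w_j}\cong\BM[a^+(z),a^-(z)]$ for suitable $a^{\pm}\in\BM^{\times}$, which lies in $\CFm$ by definition, whence~(4) (one may also invoke Lemma~\ref{lem: abelian category meromorphic eigenvalue}). For $K_-^{\ell}(z)$ this is the computation already used for~(3), giving $a^-(z)=\theta(z+(j+1)\hbar)$. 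For $K_+^{\ell}(z)$ I would compute $L_{+-}^{\ell}\,L_{--}^{\ell}(z)^{-1}\,L_{-+}^{\ell}$ on $w_j$, carefully tracking the $x$-shifts carried by the three factors ($+\hbar$ for $L_{-+}^{\ell}$, $+\hbar$ for $L_{--}^{\ell}(z)^{-1}$, $-\hbar$ for $L_{+-}^{\ell}$); writing $c:=\ell-j+1$, the claim $K_+^{\ell}(z)w_j=\dfrac{\theta(z)\theta(z+(\ell+1)\hbar)}{\theta(z+j\hbar)}\,w_j$ then becomes the scalar identity
\begin{multline*}
\theta(z+c\hbar)\theta(z+j\hbar)\theta(x+c\hbar)\theta(x-j\hbar)+\theta(c\hbar)\theta(j\hbar)\theta(z+x+c\hbar)\theta(z-x+j\hbar)\\
=\theta(z)\theta(z+(\ell+1)\hbar)\theta(x)\theta(x+(c-j)\hbar).
\end{multline*}

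This identity is where the real content lies, and is the step I expect to be the main obstacle. It is an instance of the classical three-term (Weierstrass) theta relation
\begin{multline*}
\theta(a+b)\theta(a-b)\theta(p+q)\theta(p-q)+\theta(a+p)\theta(a-p)\theta(q+b)\theta(q-b)\\
+\theta(a+q)\theta(a-q)\theta(b+p)\theta(b-p)=0,
\end{multline*}
valid for all $a,b,p,q\in\BC$: as a function of $a$ the left-hand side is a theta function with the quasi-periodicity of $\theta(a+u)\theta(a-u)$ — a two-dimensional space of such — and it vanishes at $a=b,p,q$, hence vanishes identically. I would apply it with $a=z+\tfrac{(c+j)\hbar}{2}$, $b=\tfrac{(c-j)\hbar}{2}$, $p=x+\tfrac{(c-j)\hbar}{2}$, $q=\tfrac{(c+j)\hbar}{2}$, so that $\{a\pm b\}=\{z+c\hbar,\,z+j\hbar\}$, $\{p\pm q\}=\{x+c\hbar,\,x-j\hbar\}$, $\{a\pm p\}=\{z+x+c\hbar,\,z-x+j\hbar\}$ and $\{q\pm b\}=\{c\hbar,\,j\hbar\}$: the first two products of the relation become the two summands on the left of the scalar identity, the remaining term is $\theta(a+q)\theta(a-q)\theta(b+p)\theta(b-p)=\theta(z+(\ell+1)\hbar)\theta(z)\theta(x+(c-j)\hbar)\theta(-x)$ (using $c+j=\ell+1$), and the relation together with $\theta(-x)=-\theta(x)$ produces exactly the right-hand side. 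This yields $K_+^{\ell}(z)w_j=\tfrac{\theta(z)\theta(z+(\ell+1)\hbar)}{\theta(z+j\hbar)}w_j$, which is $x$-independent and not identically zero in $z$, so condition~(4) holds and $\CW^{\ell}\in\BGG$. As a quick check, at $j=0$ the operator $L_{-+}^{\ell}$ annihilates $w_0$, so $K_+^{\ell}(z)w_0=L_{++}^{\ell}(z)w_0=\theta(z+(\ell+1)\hbar)w_0$, in agreement with the formula.
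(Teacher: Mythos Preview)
Your proof is correct. The verification of conditions (1)--(3) is the same as in the paper, and your formula $K_+^{\ell}(z)w_j=\dfrac{\theta(z)\theta(z+(\ell+1)\hbar)}{\theta(z+j\hbar)}\,w_j$ agrees with the one used later in Example~\ref{example: q-character asymptotic representations}. I checked your $x$-shift bookkeeping and the substitution into the Weierstrass three-term relation; both are accurate, and the resulting scalar identity is exactly what is needed.

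The paper, however, proceeds differently for condition~(4). Rather than computing $K_+^{\ell}(z)w_j$ directly via a theta identity, it imports from \cite[Theorem~4.13]{K1} the quantum-determinant relation $K_+^{V^l}(z)K_-^{V^l}(z-\hbar)\,v_j'=\theta(z+(l+1)\hbar)\theta(z)\,v_j'$ for the finite-dimensional modules $V^l$ (integer $l$), and then extends it to arbitrary $\ell\in\BC$ by the same interpolation device already used in the proof of Proposition~\ref{prop: finit-dim representations}: for fixed $z,x$ the left-hand side is an entire function of $\ell$ with the quasi-periodicity of $\theta(\ell\hbar+z+\hbar)$, it agrees with $\theta(\ell\hbar+z+\hbar)\theta(z)$ on all large integers, hence agrees identically. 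Your argument is more self-contained---it avoids the external reference and the interpolation machinery, reducing the computation to a single classical theta identity---while the paper's argument has the virtue of reusing a technique developed earlier and of making explicit the central element $K_+(z)K_-(z-\hbar)$ (cf.\ the remark following Proposition~\ref{prop: asymptotic representation Gauss decomposition}).
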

\begin{proof}
Since $L_{--}(z) w_j = \theta(z+(j+1)\hbar) w_j$ for all $j \in \BZ_{\geq 0}$, $L_{--}(z)$ is invertible. Conditions (1)--(3) in Definition \ref{def: category BGG} are true. $E,F,K_-$ are easy to compute:
\begin{gather*}
K_-(z) w_j = \theta(z+(j+1)\hbar) w_j,\quad  E(z) w_j = -\frac{\theta(z-x+(j-1)\hbar)\theta(j\hbar)}{\theta(z+j\hbar)\theta(x+\hbar)} w_{j-1}, \\
F(z) w_j = \frac{\theta(z+x+(\ell-j)\hbar)\theta((\ell-j)\hbar)}{\theta(z+(j+1)\hbar)\theta(x+(\ell-2j-1)\hbar)} w_{j+1}.
\end{gather*}
For $K_+$, let us be in the situation of the proof of Proposition \ref{prop: finit-dim representations}. From \cite[Theorem 4.13]{K1} one observes that: for $l \in \BZ_{\geq 0}$, 
$$ K_+^{V^l}(z)K_-^{V^l}(z-\hbar) v_j' = \theta(z+(l+1)\hbar) \theta(z) v_j' \quad \mathrm{for}\ 0 \leq j \leq l. $$ 
It is therefore enough to prove the above identity for $\CW^{\ell}$. In other words,
$$ (L_{++}(z)-L_{+-}(z)L_{--}(z)^{-1}L_{-+}(z)) L_{--}(z-\hbar) w_j = \theta(z+(\ell+1)\hbar) \theta(z) w_j. $$
Let $z,x \in \BC$ be generic. The left hand side is $g_j(\ell) w_j$ where $g_j(\ell)$ is an entire function on $\ell \in \BC$ and $g_j(\ell) \sim \theta(\ell \hbar + z + \hbar)$. From the embedding $V^n \subset \CW^n$ we deduce that $g_j(n) = \theta(n\hbar+z+\hbar) \theta(z)$ for all $n \in \BZ_{>j+3}$. This forces $g_j(\ell) = \theta(z+(\ell+1)\hbar)\theta(z)$ for all $\ell \in \BC$.
\end{proof}
\begin{rem}
Let $X \in \BGG$. Then $K_+^X(z)K_-^X(z-\hbar) \in (D_X)_{0,0}$ commutes with the $L_{ij}^X(w) \in D_X$; see \cite[Theorem 13]{FV1}, \cite[Remark 10]{EF} and \cite[Corollary E.24]{K2}.
\end{rem}
Next we adapt the $q$-character theory of Knight \cite{Kn} and Frenkel--Reshetikhin \cite{FR} to the category $\BGG$.
Motivated by Lemma \ref{lem: iso of one-dim mero eigenvalue}, let $\CM$ be the quotient of the set of pairs $(a^+(z),a^-(z))$ of non-zero meromorphic functions on $z \in \BC$ by the relation $$(a^+(z),a^-(z)) \equiv (c a^+(z),c^{-1}a^-(z)) \quad \mathrm{for}\ c \in \BC^{\times}.$$
 The isomorphism class of $(a^+(z),a^-(z))$ is denoted by $[a^+(z),a^-(z)] \in \CM$. Make $\CM$ into a group by component-wise multiplication. Introduce formal symbols $t^{\alpha}$ for $\alpha \in \BC$. Define the set $\CM_t$ whose elements are formal sums (possibly infinite) $\sum_{\alpha\in \BC}\sum_{m\in \CM} c_{m,\alpha} mt^{\alpha}$ with coefficients $c_{m,\alpha} \in \BZ$ such that:
\begin{itemize}
\item[(M1)]  there exist $\alpha_1,\alpha_2,\cdots,\alpha_r \in \BC$ such that the coefficient of $mt^{\alpha}$ is non-zero only if $\alpha\in \cup_{j=1}^r(\alpha_j + 2\BZ_{\leq 0})$;
\item[(M2)] for all $\alpha \in \BC$, the number of terms $mt^{\alpha}$ with non-zero coefficients is finite.
\end{itemize}
Make $\CM_t$ into a ring: addition is the usual one of formal sums; multiplication is induced by $(mt^{\alpha})(m't^{\beta}) = (mm')t^{\alpha+\beta}$ for $m,m' \in \CM$ and $\alpha,\beta \in \BC$.
\begin{defi} \label{def: q-character}
Let $X$ be in category $\BGG$. For $\alpha \in \wt(X)$ choose an $\BM$-basis $(v_{i}^{\alpha})_{1\leq i \leq r_{\alpha}}$ of $X[\alpha]$ such that: the coefficients $a_{ij}^{\alpha\pm}(z;x)$ of $v_i^{\alpha}$ in $K_{\pm}^V(z) v_j^{\alpha}$ form upper triangular matrices $(a_{ij}^{\alpha\pm}(z;x))_{1\leq i,j \leq r_{\alpha}}$ with diagonals $a_{ii}^{\alpha\pm}(z;x) = a_{ii}^{\alpha\pm}(z) \neq 0$ being independent of $x$. The $q$-character of $X$ is defined to be
$$ \qc(X) := \sum_{\alpha \in \wt(X)} \sum_{i=1}^{r_{\alpha}} [a_{ii}^{\alpha+}(z),a_{ii}^{\alpha-}(z)]t^{\alpha} \in \CM_t. $$
\end{defi}
\begin{lem}
$\qc(X)$ is  independent of the choice of basis $(v_i^{\alpha})$ of $X$. 
\end{lem}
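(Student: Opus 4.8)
The plan is to reduce the statement to the uniqueness part of the Jordan--H\"{o}lder theorem inside the abelian category $\CFm$, applied weight space by weight space. Fix $\alpha \in \wt(X)$. Since $K_+^X(z) \in (D_X)_{1,1}$ and $K_-^X(z) \in (D_X)_{-1,-1}$ both send $X[\gamma]$ into $X[\gamma]$, the triplet $W_\alpha := (X[\alpha], K_+^X(z)|_{X[\alpha]}, K_-^X(z)|_{X[\alpha]})$ is an object of $\CF$; by condition (4) of Definition \ref{def: category BGG} it lies in $\CFm$, so by Lemma \ref{lem: abelian category meromorphic eigenvalue} a basis $(v_i^\alpha)_{1\le i\le r_\alpha}$ of the kind used in Definition \ref{def: q-character} exists. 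For any such basis, upper-triangularity of the matrices $(a_{ij}^{\alpha\pm}(z;x))$ means $K_\pm^X(z)v_j^\alpha \in \sum_{i\le j}\BM v_i^\alpha$, so each subspace $W_{\alpha,k} := \sum_{i=1}^k \BM v_i^\alpha$ is a subobject of $W_\alpha$ in $\CF$, giving a filtration $0 = W_{\alpha,0} \subset W_{\alpha,1} \subset \cdots \subset W_{\alpha,r_\alpha} = W_\alpha$.

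I would then identify the successive quotients. The quotient $W_{\alpha,k}/W_{\alpha,k-1}$ is one-dimensional over $\BM$, generated by the image $\bar v_k^\alpha$ of $v_k^\alpha$, on which $K_\pm^X(z)$ acts by $K_\pm^X(z)(g(x)\bar v_k^\alpha) = a_{kk}^{\alpha\pm}(z)\,g(x\pm\hbar)\,\bar v_k^\alpha$, using that the diagonal entries $a_{kk}^{\alpha\pm}(z;x) = a_{kk}^{\alpha\pm}(z)$ are non-zero and independent of $x$. Comparing with the definition of $\BM[a^+(z),a^-(z)]$, the assignment $\bar v_k^\alpha \mapsto 1$ gives an isomorphism $W_{\alpha,k}/W_{\alpha,k-1} \cong \BM[a_{kk}^{\alpha+}(z), a_{kk}^{\alpha-}(z)]$ in $\CF$. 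Since any one-dimensional $\BM[a^+(z),a^-(z)]$ is simple (a subobject is an $\BM$-subspace of a one-dimensional $\BM$-space, hence $0$ or everything), the filtration above is a Jordan--H\"{o}lder series of $W_\alpha$ in $\CFm$, with composition factors $\BM[a_{ii}^{\alpha+}(z), a_{ii}^{\alpha-}(z)]$, $1\le i\le r_\alpha$.

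By the Jordan--H\"{o}lder theorem --- available since $\CFm$ is abelian (Lemma \ref{lem: abelian category meromorphic eigenvalue}) and every object has a finite composition series by condition (F1) --- the multiset of isomorphism classes of these composition factors is an invariant of $W_\alpha$, independent of the chosen basis $(v_i^\alpha)$. By Lemma \ref{lem: iso of one-dim mero eigenvalue}, two factors $\BM[a_1^+(z),a_1^-(z)]$ and $\BM[a_2^+(z),a_2^-(z)]$ are isomorphic precisely when $[a_1^+(z),a_1^-(z)] = [a_2^+(z),a_2^-(z)]$ in $\CM$, so the finite formal sum $\sum_{i=1}^{r_\alpha}[a_{ii}^{\alpha+}(z), a_{ii}^{\alpha-}(z)]$ is basis-independent. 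Summing these against $t^\alpha$ over $\alpha \in \wt(X)$ shows $\qc(X)$ is well-defined, and conditions (1)--(2) of Definition \ref{def: category BGG} guarantee it satisfies (M1)--(M2), so indeed $\qc(X) \in \CM_t$.

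The argument is essentially bookkeeping on top of Jordan--H\"{o}lder; the one point worth care --- and the reason a direct computation is awkward --- is that two admissible bases of $X[\alpha]$ differ by a matrix over $\BM$ rather than over $\BC$, and because of the dynamical shift $x \mapsto x\pm\hbar$ in (F2) such a change of basis does not act on the matrices $(a_{ij}^{\alpha\pm})$ by ordinary conjugation, so one cannot simply track the diagonal entries by hand. Passing to subobjects and invoking Lemma \ref{lem: iso of one-dim mero eigenvalue} bypasses this issue entirely.
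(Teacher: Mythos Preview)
Your proof is correct and follows essentially the same approach as the paper: both interpret the upper-triangular basis as giving a Jordan--H\"{o}lder filtration of $(X[\alpha],K_+^X(z),K_-^X(z))$ in $\CFm$, and then invoke uniqueness of composition factors together with Lemma~\ref{lem: iso of one-dim mero eigenvalue} to conclude that the diagonal data are basis-independent. Your write-up is more explicit than the paper's (which phrases the same idea in terms of the class in $K_0(\CFm)$), and your closing remark about why naive matrix conjugation fails due to the dynamical shift is a helpful observation not spelled out in the paper.
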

\begin{proof}
By Lemma \ref{lem: abelian category meromorphic eigenvalue} and Condition (4) in Definition \ref{def: category BGG}, such a basis $(v_i^{\alpha})$ exists. Conditions (1)--(2) in Definition \ref{def: category BGG} implies (M1)--(M2) for $\qc(X)$, which is an element in $\CM_t$. From the upper triangular property we deduce that: in the Grothendieck group $K_0(\CFm)$ of the abelian category $\CFm$ the isomorphism class of $(X, K_+^V(z),K_-^V(z))|_{X[\alpha]}$ is the sum of those of irreducible objects $\BM[a_{ii}^{\alpha+}(z),a_{ii}^{\alpha-}(z)]$, which corresponds to the second summation in $\qc(X)$ and is therefore independent of the choice of basis $(v_i^{\alpha})$ by Lemma \ref{lem: iso of one-dim mero eigenvalue}.
\end{proof}
\begin{example}  \label{example: q-character asymptotic representations}
Let $\ell \in \BC$. From the proof of Proposition \ref{prop: asymptotic representation Gauss decomposition} we see that
\begin{align*}
\qc(\CW^{\ell}) &= \sum_{j=0}^{\infty} \left[\frac{\theta(z+(\ell+1)\hbar) \theta(z)}{\theta(z+j\hbar)}, \theta(z+(j+1)\hbar)\right] t^{\ell - 2j}  \\
&= [\theta(z+(\ell+1)\hbar), \theta(z+\hbar)]t^{\ell} \times \sum_{j=0}^{\infty} \left[\frac{\theta(z)}{\theta(z+j\hbar)}, \frac{\theta(z+(j+1)\hbar)}{\theta(z+\hbar)}\right] t^{-2j}.
\end{align*} 
In particular, $\qc(\CW^{\ell}) = [\theta(z+(\ell+1)\hbar), \theta(z+\hbar)]t^{\ell} \times \qc(\CW^0)$.
\end{example}
Let $X$ be in category $\BGG$. A non-zero weight vector $v$ is called a {\it highest weight vector}\footnote{This is stronger than the notion of highest weight in \cite[\S 3]{FV1}.} if $L_{-+}(z) v = 0$ and $L_{\pm\pm}(z)v = a^{\pm}(z) v$ where $a^{\pm}(z)$ are meromorphic functions on $z$ (independent of $x$). Call $V$ a {\it highest weight module} if it is $\BM$-linearly spanned by the $L_{+-}(z_1)L_{+-}(z_2) \cdots L_{+-}(z_n) v$ where $n \in \BZ_{\geq 0}$, $z_1,z_2,\cdots,z_n \in \BC$ are generic, and $v \in X[\alpha]$ is a highest weight vector; the monomial $[a^+(z),a^-(z)]t^{\alpha} \in \CM_t$ is called the highest weight of $X$ (and of $v$).

For example, $w_0 \in \CW^{\ell}$ is a highest weight vector. $\CW^{\ell}$ is of highest weight if and only if $\ell \notin \BZ_{\geq 0} + \hbar^{-1}(\BZ + \BZ \tau)$. See Remark \ref{rem: simplicity asymptotic}.
\begin{lem}  \label{lem: highest weight}
Simple objects in category $\BGG$ are of highest weight. Two such objects are isomorphic if and only if their highest weights are the same, if and only if their $q$-characters are the same.
\end{lem}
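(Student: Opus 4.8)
The plan is the usual one for category-$\BGG$ classifications: produce a highest weight vector from a maximal weight, show that a simple object is generated by (hence spanned by $L_{+-}$-strings applied to) such a vector, and then distinguish simple objects by their highest weight, which can be read off from the $q$-character.

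First, I would show that every nonzero $X\in\BGG$ contains a highest weight vector. By Condition~(2) of Definition~\ref{def: category BGG} the set $\wt(X)$, ordered by $\beta\preceq\gamma\iff\gamma-\beta\in2\BZ_{\geq0}$, has a maximal element $\alpha$, so $X[\alpha+2]=0$; hence $L_{-+}(z)$ and the Gauss factor $E(z)$ annihilate $X[\alpha]$, and on $X[\alpha]$ one has $L_{\pm\pm}(z)=K_\pm(z)$. By Condition~(4), $(X,K_+(z),K_-(z))|_{X[\alpha]}\in\CFm$, so Lemma~\ref{lem: abelian category meromorphic eigenvalue} and the remark following it provide $v\in X[\alpha]$ with $K_\pm(z)v=a^\pm(z)v$ for meromorphic $a^\pm(z)$ independent of $x$. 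Then $L_{-+}(z)v=K_-(z)E(z)v=0$ and $L_{\pm\pm}(z)v=a^\pm(z)v$, so $v$ is a highest weight vector.

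The main step is to show that the $\BM$-span $N$ of all $L_{+-}(z_1)\cdots L_{+-}(z_n)v$ ($n\geq0$, $z_i$ generic) is a subrepresentation, i.e.\ stable under every $L_{ij}(w)$. This follows from the RLL relations~\eqref{rel: RLL explicit} by the familiar algebraic Bethe ansatz manipulation: to apply $L_{-+}(w)$ to such a string one commutes it to the right past each $L_{+-}(z_k)$, which by~\eqref{rel: RLL explicit} replaces $L_{-+}(w)L_{+-}(z_k)$ by $L_{+-}(z_k)L_{-+}(w)$ plus terms built from the diagonal operators $L_{\pm\pm}$; those diagonal operators are then moved further right through the remaining $L_{+-}$'s (a ``clean'' commutation producing only $L_{+-}$'s and diagonal operators with shifted arguments), and at the far right one meets $L_{-+}(w)v=0$ or $L_{\pm\pm}(w)v=a^\pm(w)v$, so the whole expression lies in $N$; applying $L_{\pm\pm}(w)$ is the same argument without the first, nontrivial, commutation step. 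The only real work is keeping track of the dynamical shifts of the variable $x$ in~\eqref{rel: RLL explicit}, and I expect this bookkeeping to be the one genuinely technical point of the proof. Granting it: if $X$ is simple then $N$ is a nonzero subrepresentation, so $N=X$ and $X$ is a highest weight module with highest weight $mt^\alpha$, $m=[a^+(z),a^-(z)]$; this is the first assertion of the lemma.

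For the classification, observe that a highest weight module $Y$ with highest weight $mt^\alpha$ has $Y[\alpha]=\BM v$ one-dimensional (longer strings have strictly lower weight), so a subrepresentation is proper iff it meets $Y[\alpha]$ in $0$; hence the sum of all proper subrepresentations is proper, $Y$ has a unique maximal proper subrepresentation $\mathrm{rad}(Y)$, and a unique simple quotient $L(mt^\alpha):=Y/\mathrm{rad}(Y)$, again of highest weight $mt^\alpha$. This quotient is independent of $Y$: given $Y_1,Y_2$ of highest weight $mt^\alpha$ with highest weight vectors $v_1,v_2$, the subrepresentation $D$ of $Y_1\oplus Y_2$ generated by $(v_1,v_2)$ surjects onto each $Y_i$ (its image contains $v_i$, which generates $Y_i$), hence onto $Y_i/\mathrm{rad}(Y_i)$; since $D$ is itself a highest weight module with highest weight $mt^\alpha$ by the previous step, every simple quotient of $D$ is isomorphic to $D/\mathrm{rad}(D)$, so $Y_1/\mathrm{rad}(Y_1)\cong D/\mathrm{rad}(D)\cong Y_2/\mathrm{rad}(Y_2)$. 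A simple $X$ with highest weight $mt^\alpha$ is a highest weight module, so $X=X/\mathrm{rad}(X)\cong L(mt^\alpha)$; thus equality of highest weights implies isomorphism, and the converse is trivial. Finally the $q$-character sees the highest weight: the weights of a simple (hence highest weight) $X$ are $\alpha-2n$, $n\geq0$, so $\alpha$ is the unique maximal weight occurring in $\qc(X)$, and $X[\alpha]=\BM v$ being one-dimensional with $K_\pm(z)v=a^\pm(z)v$, the term of $\qc(X)$ at weight $\alpha$ is precisely $mt^\alpha$ with coefficient $1$. Hence $\qc(X)=\qc(Y)$ forces equality of highest weights and, by the above, $X\cong Y$; the converse holds since $\qc$ is an isomorphism invariant.
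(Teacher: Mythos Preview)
Your proposal is correct and follows essentially the same route as the paper: find a maximal weight, extract a joint eigenvector there via $\CFm$, show the $L_{+-}$-strings span by a PBW/commutation argument from the RLL relation, then use the ``diagonal in the direct sum'' trick to get uniqueness and read the highest weight off the top of $\qc$. The only cosmetic difference is that the paper phrases the spanning step as ``simple $\Rightarrow$ spanned by all $L_{ij}$-monomials, then reorder so only $L_{+-}$'s remain'' and outsources the reordering to \cite[Lemma 2.4]{C}, whereas you equivalently show the $L_{+-}$-span is a submodule and sketch the commutation yourself.
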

\begin{proof}
Let $S$ be a simple object in category $\BGG$. By Condition (2) of Definition \ref{def: category BGG}, there exists $\alpha \in \wt(S)$ such that  $S[\alpha+n] = 0$ for all $n \in \BZ_{>0}$. This implies $L_{-+}(z) S[\alpha] = 0$ and $K_{\pm}(z)|_{S[\alpha]} = L_{\pm\pm}(z)|_{S[\alpha]}$. Since $(S[\alpha],K_+,K_-)\in \CFm$, there exists $0 \neq v \in S[\alpha]$ such that $K_{\pm}(z) v = a^{\pm}(z) v$ with $a^{\pm}(x) \in \BM$. So $v$ is a highest weight vector. Since $S$ is simple, it is spanned by the $L_{i_1j_1}(z_1)L_{i_2j_2}(z_2) \cdots L_{i_nj_n}(z_n) v$. By Equation \eqref{rel: RLL explicit} and \cite[Lemma 2.4]{C}, one may assume $(i_s,j_s) = (+-)$ for $1\leq s \leq n$. This proves that $S$ is of highest weight. $\qc(S)$ is $[a^+(z),a^-(z)]t^{\alpha}$ plus terms of the form $m't^{\alpha-2n}$ with $m' \in \CM$ and $n \in \BZ_{>0}$. The highest weight $[a^+(z),a^-(z)]t^{\alpha}$ of $S$ appears in $\qc(S)$ as the leading term. 

Assume that $mt^{\alpha}$ is the highest weight of two simple objects $S_1,S_2$ in category $\BGG$. Consider the $\CE$-module $V = S_1\oplus S_2$ with natural projections $\pi_i: V \longrightarrow S_i$. Let $v_i \in S_i$ be highest weight vectors. Then $(v_1,v_2) \in V$ is also a highest weight vector, which generates a highest weight submodule $W$ of $V$. Since $\pi_i(v_1,v_2) = v_i$, the restrictions $\pi_i|_W: W \longrightarrow S_i$ are non-zero and hence surjective as the $S_i$ are simple. This implies that the $S_i$ are simple quotients of $W$. Being a highest weight module, $W$ has a unique simple quotient. This implies $S_1 \cong S_2$. 
\end{proof}
For $mt^{\alpha} \in \CM_t$ a highest weight of an object in category $\BGG$, fix a simple object $S(mt^{\alpha}) \in \BGG$ of highest weight $mt^{\alpha}$. Let $\Simple$ be the set of all such $S(mt^{\alpha})$. 

Define the completed Grothendieck group $K_0(\BGG)$: elements are formal sums (possibly infinite) $\sum_{S\in \Simple} n_S[S]$ with coefficients $n_S \in \BZ$ such that $\oplus_{S\in \Simple} S^{\oplus |n_S|}\in \BGG$; addition is the usual one of formal sums. For $X \in \BGG$ and $S \in \Simple$, the multiplicity $m_{S,X} \in \BZ_{\geq 0}$ of $S$ in $X$ is a well-defined due to Definition \ref{def: category BGG} (1)--(2), as in the case of Kac--Moody algebras \cite[\S 9.6]{Kac}. Furthermore $[X] := \sum_{S\in \Simple} m_{S,X}[S] \in K_0(\BGG)$. By Definition \ref{def: q-character}, $[X] \mapsto \qc(X)$ extends uniquely to a morphism of additive groups $\qc: K_0(\BGG) \longrightarrow \CM_t$.
\begin{prop} \label{prop: tensor product q-character}
If $X,Y \in \BGG$, then the tensor product representation $X\wtimes Y$ is in category $\BGG$ and $\qc(X \wtimes Y) = \qc(X)\qc(Y)$.
\end{prop}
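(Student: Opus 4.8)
The plan is to check the four conditions of Definition~\ref{def: category BGG} for $X\wtimes Y$ while simultaneously computing the diagonal Gauss data in a carefully ordered basis, so that multiplicativity of $\qc$ drops out of the same bookkeeping. I would start from the weight decomposition $(X\wtimes Y)[\gamma]=\bigoplus_{\alpha+\beta=\gamma}X[\alpha]\wtimes Y[\beta]$, with $\dim_{\BM}(X[\alpha]\wtimes Y[\beta])=\dim_{\BM}X[\alpha]\,\dim_{\BM}Y[\beta]$. As $\wt(X)$ and $\wt(Y)$ each lie in a finite union of cosets $\alpha_i+2\BZ_{\leq 0}$, resp.\ $\beta_j+2\BZ_{\leq 0}$, for each $\gamma$ only finitely many $\alpha$ have $X[\alpha]\neq 0\neq Y[\gamma-\alpha]$, and $\wt(X\wtimes Y)\subseteq\bigcup_{i,j}(\alpha_i+\beta_j+2\BZ_{\leq 0})$; this gives conditions (1) and (2). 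By \eqref{rel: coproduct} and Lemma~\ref{lem: tensor product difference operators}, $L_{ij}^{X\wtimes Y}(z)(u\wtimes v)=\sum_{k\in\{\pm\}}L_{ik}^X(z)u\wtimes L_{kj}^Y(z)v$; using $L_{--}^X=K_-^X$, $L_{-+}^X=K_-^XE^X$, $L_{--}^Y=K_-^Y$, $L_{+-}^Y=F^YK_-^Y$ this gives in particular
\[
L_{--}^{X\wtimes Y}(z)(u\wtimes v)=K_-^X(z)\bigl(E^X(z)u\bigr)\wtimes F^Y(z)\bigl(K_-^Y(z)v\bigr)+K_-^X(z)u\wtimes K_-^Y(z)v.
\]

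Next I would use Lemma~\ref{lem: abelian category meromorphic eigenvalue} and condition (4) for $X,Y$ to fix, for each weight, $\BM$-bases $(u_i^\alpha)$ of $X[\alpha]$ and $(v_j^\beta)$ of $Y[\beta]$ making $K_\pm^X(z)$, resp.\ $K_\pm^Y(z)$, upper triangular with $x$-independent nonzero diagonals $a_{ii}^{X,\alpha,\pm}(z)$, resp.\ $a_{jj}^{Y,\beta,\pm}(z)$, and order the basis $\{u_i^\alpha\wtimes v_j^{\gamma-\alpha}\}$ of $(X\wtimes Y)[\gamma]$ by decreasing $\alpha$, refined within each $\alpha$-block by any total order extending the product order on the pairs $(i,j)$. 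The structural observation is that $E^X(z)\in(D_X)_{0,2}$ raises $\alpha$ by $2$ while $F^Y(z)\in(D_Y)_{2,0}$ acts on the $Y$-factor and leaves $\alpha$ fixed; so in the display above the first summand lies in the $(\alpha+2)$-block, strictly earlier in our order, and the second is a combination of $u_m^\alpha\wtimes v_n^{\gamma-\alpha}$ with $(m,n)$ not exceeding $(i,j)$, the coefficient of $u_i^\alpha\wtimes v_j^{\gamma-\alpha}$ being $a_{ii}^{X,\alpha,-}(z)\,a_{jj}^{Y,\gamma-\alpha,-}(z)$ — no dynamical shift arises, precisely because these diagonals do not depend on $x$ (Relation~\eqref{rel: weight tensor}). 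Hence $L_{--}^{X\wtimes Y}(z)$ is upper triangular with nonzero diagonal for generic $z$: this is condition (3), which legitimizes the Gauss decomposition of $X\wtimes Y$, and $K_-^{X\wtimes Y}(z)=L_{--}^{X\wtimes Y}(z)$ has diagonal $a_{ii}^{X,\alpha,-}(z)a_{jj}^{Y,\gamma-\alpha,-}(z)$ on $u_i^\alpha\wtimes v_j^{\gamma-\alpha}$.

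For $K_+^{X\wtimes Y}(z)=L_{++}^{X\wtimes Y}-L_{+-}^{X\wtimes Y}(L_{--}^{X\wtimes Y})^{-1}L_{-+}^{X\wtimes Y}$ I would substitute the coproduct formula and the Gauss decompositions of $X,Y$ and carry out the elementary re-decomposition of the product of $2\times 2$ matrices $L^X(z)L^Y(z)$. This expresses $K_+^{X\wtimes Y}(z)$ on each weight space in terms of the actions of $K_+^X(z)$ on $X[\alpha]$ and $K_+^Y(z)$ on $Y[\gamma-\alpha]$ together with a correction factor built from $E^X(z),F^Y(z)$ that differs from the identity by an $\alpha$-raising operator, hence nilpotent on the finite-dimensional weight space; in our ordered basis $K_+^X(z)$ and $K_+^Y(z)$ contribute upper triangular matrices with diagonals $a_{ii}^{X,\alpha,+}(z)$ and $a_{jj}^{Y,\gamma-\alpha,+}(z)$, and the correction factor is strictly upper triangular with unit diagonal. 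Thus $K_+^{X\wtimes Y}(z)$ is upper triangular with diagonal $a_{ii}^{X,\alpha,+}(z)a_{jj}^{Y,\gamma-\alpha,+}(z)$ on $u_i^\alpha\wtimes v_j^{\gamma-\alpha}$, again $x$-independent and nonzero; by Lemma~\ref{lem: abelian category meromorphic eigenvalue} this is condition (4), so $X\wtimes Y\in\BGG$. Finally, by Definition~\ref{def: q-character},
\[
\qc(X\wtimes Y)=\sum_{\gamma}\ \sum_{\alpha,\,i,\,j}\bigl[a_{ii}^{X,\alpha,+}(z)a_{jj}^{Y,\gamma-\alpha,+}(z),\ a_{ii}^{X,\alpha,-}(z)a_{jj}^{Y,\gamma-\alpha,-}(z)\bigr]t^{\gamma},
\]
and reindexing by $(\alpha,i)$ and $(\gamma-\alpha,j)$, and using that multiplication in $\CM$ is componentwise and $(mt^{\alpha})(m't^{\beta})=mm't^{\alpha+\beta}$, this equals $\qc(X)\qc(Y)$.

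The step I expect to be the main obstacle is the computation of $K_+^{X\wtimes Y}(z)$: the Gauss re-decomposition of $L^X(z)L^Y(z)$ must be carried out while tracking the dynamical-parameter shifts of Relation~\eqref{rel: weight tensor}, and one has to confirm both that the correction factor is genuinely unipotent (its non-identity part being $\alpha$-raising, hence nilpotent on each weight space) and that it leaves the diagonal untouched, so that $\qc$ is exactly — not just up to a unipotent twist — multiplicative. The same point, in a milder form, is what underlies the validity of the displayed formula for $L_{--}^{X\wtimes Y}(z)$.
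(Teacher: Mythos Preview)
Your argument for conditions (1)--(3) and for the upper-triangularity and diagonal of $K_-^{X\wtimes Y}(z)$ is essentially the paper's: the same coproduct expansion, the same ordering of the tensor basis by decreasing $\alpha$ (refined by the product order), the same observation that the $L_{-+}^X\wtimes L_{+-}^Y$ term is $\alpha$-raising hence strictly upper triangular, and the same remark that $x$-independence of the diagonals prevents any dynamical shift from spoiling the product formula.

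The one place you diverge from the paper is the treatment of $K_+^{X\wtimes Y}(z)$. You propose to compute it directly via $K_+ = L_{++} - L_{+-}L_{--}^{-1}L_{-+}$ by Gauss re-decomposing the product $L^X L^Y$; you then assert that the ``correction factor'' is unipotent with $\alpha$-raising non-identity part, but you flag this as the main obstacle, and rightly so: in the coproduct expansion, $L_{++}^{X\wtimes Y}$ and $L_{+-}^{X\wtimes Y}$ each contain an $\alpha$-\emph{lowering} piece (through $L_{+-}^X\in(D_X)_{1,-1}$), so one must actually verify a cancellation against the corresponding $\alpha$-lowering part of $L_{+-}(L_{--})^{-1}L_{-+}$ before concluding upper-triangularity. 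This cancellation does occur, but checking it by hand is the delicate computation you anticipate. The paper avoids it entirely by a neat symmetry: since $L^X(z)$ is invertible as a $2\times 2$ matrix, one has $(L^X(z)^{-1})_{++} = K_+^X(z)^{-1}$, with $L_{ij}^{X*}(z):=(L^X(z)^{-1})_{ij}\in(D_X)_{i1,j1}$. The coproduct then gives
\[
K_+^{X\wtimes Y}(z)^{-1} \;=\; K_+^X(z)^{-1}\wtimes K_+^Y(z)^{-1}\;+\;L_{-+}^{X*}(z)\wtimes L_{+-}^{Y*}(z),
\]
which is formally identical to the $K_-$ case (same bigradings on the correction term), so the very same upper-triangular argument applies to $K_+^{-1}$ without any new computation. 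Your route and the paper's reach the same conclusion, but the paper's trick of passing to $L^{-1}$ replaces the re-decomposition you were worried about by a one-line observation.
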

\begin{proof}
Conditions (1)--(2) in Definition \ref{def: category BGG} are clear for the representation $X\wtimes Y$. The proof of Condition (3) for $X\wtimes Y$ is similar to \cite[\S 2.4]{FR}. For $\alpha,\beta \in \BC$, choose ordered bases $(v_i^{\alpha})_{1\leq i \leq r_{\alpha}}$ and $(w_j^{\beta})_{1\leq j \leq s_{\beta}}$ for $X[\alpha]$ and $Y[\beta]$ respectively as in Definition \ref{def: q-character}. Order the basis $(v_{i}^{\alpha} \wtimes w_{j}^{\beta})_{\alpha,\beta,i,j}$ of $X\wtimes Y$ such that:
\begin{itemize}
\item[(a)] $v_{i_1}^{\alpha}\wtimes w_{j_1}^{\beta} \preceq v_i^{\alpha} \wtimes v_j^{\beta}$ if $i_1 \leq i \leq r_{\alpha}$ and $ j_1 \leq j  \leq s_{\beta}$;
\item[(b)] $v_{i}^{\alpha+2} \wtimes w_{j}^{\beta-2} \prec v_{k}^{\alpha} \wtimes w_l^{\beta}$ if $ i \leq r_{\alpha+2},\  j \leq s_{\beta-2},\ k \leq r_{\alpha}$ and $l  \leq s_{\beta}$.
\end{itemize}
By Equation \eqref{rel: coproduct} and Lemma \ref{lem: tensor product difference operators}, in $D_{X \wtimes Y}$ we have
 $$ K_-^{X\wtimes Y}(z) =  K_-^X(z) \wtimes K_-^Y(z) + L_{-+}^X(z) \wtimes L_{+-}^Y(z).  $$
Since $L_{-+}^X(z) \in (D_X)_{-1,1}$ and $L_{+-}^Y(z) \in (D_Y)_{1,-1}$, the ordered basis $(v_i^{\alpha} \wtimes w_j^{\beta})$ induces an upper triangular matrix for $K_-^{X\wtimes Y}(z)$, whose diagonal element associated to $v_i^{\alpha} \wtimes w_j^{\beta}$ is the product of those associated to $v_i^{\alpha}, w_j^{\beta}$. So $K_-^{X\wtimes Y}(z)$ is invertible. 

Conditions (3)--(4) for $X$ indicate that the $2\times 2$ matrix $L^X(z)$ is invertible. Set
$$ L^X(z)^{-1} = \begin{pmatrix}
L_{++}^{X*}(z) & L_{+-}^{X*}(z) \\
L_{-+}^{X*}(z) & L_{--}^{X*}(z)
\end{pmatrix}. $$ 
Then $L_{ij}^{X*}(z) \in (D_X)_{i1,j1}$ for $i,j \in \{ \pm\}$ and $L_{++}^{X*}(z) = K_+^X(z)^{-1}$. By Equation \eqref{rel: coproduct}, $L^{X\wtimes Y}(z)$ is also invertible, and we have in $D_{X\wtimes Y}$:
$$ K_+^{X\wtimes Y}(z)^{-1} =  K_+^X(z)^{-1} \wtimes K_+^Y(z)^{-1} + L_{-+}^{X*}(z) \wtimes L_{+-}^{Y*}(z).   $$
So similar arguments for $K_-^{X\wtimes Y}(z)$ work for $K_+^{X\wtimes Y}(z)^{-1}$. In particular Condition (4) is true for $X\wtimes Y$ and the $q$-character formulas match.
\end{proof}
 $K_0(\BGG)$ is a ring with multiplication induced from $[X][Y] := [X\wtimes Y]$ for $X,Y \in \BGG$. 
\begin{cor} \label{cor: q-character ring homo}
$\qc: K_0(\BGG) \longrightarrow \CM_t$ is an injective homomorphism of rings. In particular, $K_0(\BGG)$ is a commutative ring.
\end{cor}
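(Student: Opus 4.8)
The plan is to combine the previously established facts into the ring statement. First I would recall that $\qc\colon K_0(\BGG)\longrightarrow \CM_t$ is a morphism of additive groups, and that by Proposition \ref{prop: tensor product q-character} it is compatible with the multiplications, hence a ring homomorphism; this also shows $K_0(\BGG)$ is commutative \emph{once} injectivity is known, since $\CM_t$ is commutative by construction. So the whole content of the corollary reduces to injectivity of $\qc$.

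For injectivity I would argue as in the classical $q$-character formalism. Suppose $\sum_{S\in\Simple} n_S[S]$ lies in the kernel with not all $n_S$ zero. By Definition \ref{def: q-character} together with Lemma \ref{lem: highest weight}, each simple $S=S(mt^\alpha)$ has $\qc(S)=mt^\alpha + (\text{lower terms})$, where ``lower'' means terms $m't^{\beta}$ with $\beta\in\alpha+2\BZ_{<0}$; the leading monomial $mt^\alpha$ determines $S$ uniquely. The key combinatorial point is that one can well-order the (countable) set of monomials $mt^\alpha$ appearing as highest weights so that this ordering is compatible with the partial order on the $t$-exponents coming from Condition (M1)/(2): because the weights involved lie in a finite union of cosets $\alpha_j+2\BZ_{\le 0}$ and each level $t^{\alpha}$ carries only finitely many monomials (M2), any nonempty subset of occurring highest weights has a maximal element. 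Taking $S_0$ with maximal highest weight $m_0t^{\alpha_0}$ among those with $n_{S_0}\neq 0$, the coefficient of $m_0t^{\alpha_0}$ in $\qc(\sum n_S[S])$ receives a contribution only from $S_0$ itself (every other $S$ with $n_S\neq0$ has highest weight $\preceq m_0t^{\alpha_0}$, and if it equals $m_0t^{\alpha_0}$ then $S\cong S_0$ by Lemma \ref{lem: highest weight}, while if it is strictly smaller its $q$-character cannot reach the monomial $m_0t^{\alpha_0}$). Hence that coefficient is $n_{S_0}\neq 0$, contradicting membership in the kernel. This proves $\qc$ is injective.

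The main obstacle is making the ``leading term'' argument rigorous in this infinite setting: unlike the finite-dimensional case, an element of $K_0(\BGG)$ is an infinite $\BZ$-linear combination of simples, so one must check that the maximal-element argument genuinely applies. This is where Conditions (1)--(2) of Definition \ref{def: category BGG} (equivalently (M1)--(M2) for elements of $\CM_t$) are essential: they guarantee that the support of any class in $K_0(\BGG)$, projected to the set of $t$-exponents, lies in a finite union of downward cosets $\alpha_j+2\BZ_{\le 0}$, each fibre being finite, so the partial order is well-founded and a maximal occurring highest weight exists. Once that is in place the rest is the bookkeeping sketched above.

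\begin{proof}
By Proposition \ref{prop: tensor product q-character}, $\qc([X][Y])=\qc(X\wtimes Y)=\qc(X)\qc(Y)$ for $X,Y\in\BGG$, and $\qc$ is already a morphism of additive groups, so $\qc$ is a ring homomorphism. Since $\CM_t$ is commutative, injectivity of $\qc$ will force $K_0(\BGG)$ to be commutative as well; thus it remains to prove that $\qc$ is injective.

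Suppose $y=\sum_{S\in\Simple}n_S[S]\in K_0(\BGG)$ satisfies $\qc(y)=0$ but $n_S\neq 0$ for some $S$. For $S=S(mt^\alpha)\in\Simple$, Lemma \ref{lem: highest weight} and the proof of Lemma \ref{lem: highest weight} show that $\qc(S)=mt^\alpha+\sum_{n>0}(\text{terms }m't^{\alpha-2n})$, and that the monomial $mt^\alpha$ occurs in $\qc(S)$ with coefficient $1$ and determines $S$. Because $\oplus_{S}S^{\oplus|n_S|}\in\BGG$, Conditions (1)--(2) of Definition \ref{def: category BGG} provide $\alpha_1,\dots,\alpha_r\in\BC$ with all relevant weights lying in $\cup_{j=1}^r(\alpha_j+2\BZ_{\le 0})$, and for each fixed exponent only finitely many monomials occur. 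Hence the set of highest weights $mt^\alpha$ of those $S$ with $n_S\neq 0$, ordered by declaring $m_1t^{\beta_1}\preceq m_2t^{\beta_2}$ when $\beta_1\in\beta_2+2\BZ_{\le 0}$, has a maximal element $m_0t^{\alpha_0}$; pick $S_0$ with $n_{S_0}\neq0$ and highest weight $m_0t^{\alpha_0}$. For every $S$ with $n_S\neq0$, if its highest weight is strictly below $m_0t^{\alpha_0}$ then all monomials of $\qc(S)$ have exponent strictly below $\alpha_0$, so $m_0t^{\alpha_0}$ does not appear; if its highest weight equals $m_0t^{\alpha_0}$ then $S\cong S_0$. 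Therefore the coefficient of $m_0t^{\alpha_0}$ in $\qc(y)$ equals $n_{S_0}\neq 0$, contradicting $\qc(y)=0$. Thus $\qc$ is injective, and $K_0(\BGG)$ is commutative.
\end{proof}
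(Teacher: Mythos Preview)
Your argument is correct in spirit and is exactly the standard leading-term argument the paper invokes (citing \cite{FR} together with Lemma~\ref{lem: highest weight}); the ring-homomorphism part via Proposition~\ref{prop: tensor product q-character} is fine. There is, however, a small gap in the case split at the end: ``strictly below $m_0t^{\alpha_0}$'' versus ``equals $m_0t^{\alpha_0}$'' is not exhaustive. A simple $S$ with $n_S\neq0$ may have highest weight $mt^{\alpha_0}$ with $m\neq m_0$ (equivalent but not equal in your preorder), or highest weight $mt^{\beta}$ with $\beta$ in a different $2\BZ$-coset from $\alpha_0$ (incomparable). Both cases are immediate---in the first, the unique monomial of $\qc(S)$ at level $\alpha_0$ is $mt^{\alpha_0}\neq m_0t^{\alpha_0}$; in the second, every monomial of $\qc(S)$ has exponent in $\beta+2\BZ_{\le0}$, which does not contain $\alpha_0$---but they should be stated for the ``Therefore'' to follow.
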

The proof of injectivity is standard as in \cite[Theorem 3 (1)]{FR}, making use of Lemma \ref{lem: highest weight}. We are able to prove the first main result of this paper.
\begin{theorem} \label{thm: identity in K asymptotic representations}
$[\CW^{\ell,0} \wtimes \CW^{0,u}] = [\CW^{\ell-u,u} \wtimes \CW^{u,0}] \in K_0(\BGG)$ for $\ell,u \in \BC$. 
\end{theorem}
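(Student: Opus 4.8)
The plan is to use the injectivity of the $q$-character map $\qc$ from Corollary \ref{cor: q-character ring homo}: it suffices to verify the equality of $q$-characters $\qc(\CW^{\ell,0} \wtimes \CW^{0,u}) = \qc(\CW^{\ell-u,u} \wtimes \CW^{u,0})$ in $\CM_t$. By Proposition \ref{prop: tensor product q-character} this reduces to the multiplicative identity $\qc(\CW^{\ell,0}) \, \qc(\CW^{0,u}) = \qc(\CW^{\ell-u,u}) \, \qc(\CW^{u,0})$ among $q$-characters of single asymptotic representations. So the whole statement becomes a closed-form computation in the commutative ring $\CM_t$.

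The key ingredient is Example \ref{example: q-character asymptotic representations}, which gives $\qc(\CW^{\ell}) = [\theta(z+(\ell+1)\hbar),\theta(z+\hbar)]\, t^{\ell} \times \qc(\CW^0)$, a factorization into a ``highest weight monomial'' times the universal tail $\qc(\CW^0)$. The effect of the spectral automorphism $\Psi_u$ of Equation \eqref{rel: spectral automorphism} on $q$-characters is simply the substitution $z \mapsto z + u\hbar$ in all the meromorphic functions appearing (the weights $t^{\alpha}$ are unchanged, since $\Psi_u$ does not alter the $\BC$-grading). Thus $\qc(\CW^{\ell,u})$ is obtained from $\qc(\CW^{\ell})$ by shifting $z \mapsto z+u\hbar$; I would record this as a small lemma (or inline remark) before the main computation. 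Writing $\qc(\CW^0)\big|_{z \to z + u\hbar}$ for the shifted tail, one then has, for instance,
$$
\qc(\CW^{\ell,0}) = [\theta(z+(\ell+1)\hbar),\theta(z+\hbar)]\, t^{\ell}\, \qc(\CW^0),
$$
$$
\qc(\CW^{0,u}) = [\theta(z+(u+1)\hbar),\theta(z+(u+1)\hbar)]\, \qc(\CW^0)\big|_{z\to z+u\hbar},
$$
and analogous formulas for the right-hand side factors $\CW^{\ell-u,u}$ and $\CW^{u,0}$.

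Multiplying out both sides, the product of the two universal tails $\qc(\CW^0)\,\qc(\CW^0)\big|_{z\to z+u\hbar}$ is symmetric in the two factors and therefore appears identically on both sides of the desired identity; it cancels. What remains is an identity among monomials of the form $[\,\cdot\,,\,\cdot\,]\, t^{\,\text{power of }t}$: the $t$-powers match trivially since $\ell + 0 = (\ell-u) + u$, and the $\CM$-components reduce to checking
$$
[\theta(z+(\ell+1)\hbar),\theta(z+\hbar)] \cdot [\theta(z+(u+1)\hbar),\theta(z+(u+1)\hbar)]\big|_{z\to z}
$$
equals the corresponding product from the right-hand side, after also shifting one tail's base point. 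Here I must be careful that the shift $z \mapsto z+u\hbar$ is applied to the correct factors — $\CW^{0,u}$ on the left, $\CW^{\ell-u,u}$ on the right — and then use the equivalence relation defining $\CM$ (i.e. $(a^+,a^-)\equiv(ca^+,c^{-1}a^-)$) together with the quasi-periodicity of $\theta$ only if needed; in fact no theta identities should be required, just bookkeeping of arguments. The one genuine point to get right — and the main potential obstacle — is the precise form of the tail of $\qc(\CW^{\ell,u})$: the tail $\sum_{j\geq 0}[\theta(z)/\theta(z+j\hbar),\, \theta(z+(j+1)\hbar)/\theta(z+\hbar)]\,t^{-2j}$ of $\CW^0$, after the shift $z \mapsto z+u\hbar$, is literally the tail of $\qc(\CW^{0,u})$ but written with shifted arguments, and one must confirm that this shifted tail is the \emph{same} $\CM_t$-element regardless of which asymptotic module ($\CW^{0,u}$ or $\CW^{\ell-u,u}$) it came from — which it is, because Example \ref{example: q-character asymptotic representations} shows the tail factor of $\qc(\CW^{\ell})$ does not depend on $\ell$ at all. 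Once that independence is invoked, the identity collapses to the trivial monomial check above, completing the proof.
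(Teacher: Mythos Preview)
Your proposal is correct and follows essentially the same route as the paper: compute $\qc(\CW^{\ell,u})$ from Example~\ref{example: q-character asymptotic representations} together with the spectral shift $z\mapsto z+u\hbar$, observe that the $\ell$-independent tail $m(z+u\hbar)$ appears symmetrically on both sides so that only a monomial identity in $\CM_t$ remains, and conclude by the injectivity of $\qc$ from Corollary~\ref{cor: q-character ring homo}. The paper's proof is simply a terser write-up of exactly this computation.
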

\begin{proof}
Let us make explicit the variable $z$ in elements of $\CM_t$. Set $m(z) := \qc(\CW^{0,0})$. From Example \ref{example: q-character asymptotic representations} and Equation \eqref{rel: spectral automorphism} we obtain
$$\qc(\CW^{\ell,u}) = m(z+u\hbar) [\theta(z+(\ell+u+1)\hbar), \theta(z+(u+1)\hbar)] t^{\ell} $$
and $\qc(\CW^{\ell,0} \wtimes \CW^{0,u}) = \qc(\CW^{\ell-u,u} \wtimes \CW^{u,0})$. Conclude by Corollary \ref{cor: q-character ring homo}.
\end{proof}
The spin/spectral parameters in asymptotic representations are interchangeable. This observation will lead to functional relations in Section \ref{s-TQ}.
\section{Factorization of simple representations}\label{s-Simple}
We prove more general facts on simple objects in category $\BGG$. 

By Lemma \ref{lem: highest weight}, simple objects in category $\BGG$ are parametrized by their highest weights. The following result, stated without proof in \cite[Theorem 9]{FV1}, describes all such highest weights as elements in $\CM_t$. 
\begin{theorem} \label{thm: simple modules in O}
$mt^{\alpha} = [a^+(z),a^-(z)] t^{\alpha} \in \CM_t$ is the highest weight of a simple module in $\BGG$ if and only if there exist $\lambda \in \BC^{\times}$ and $\alpha_1,\alpha_2,\cdots,\alpha_n, \beta_1,\beta_2,\cdots,\beta_n \in \BC$ such that $\alpha_1+\alpha_2+ \cdots + \alpha_n - \beta_1 - \beta_2 - \cdots - \beta_n = \alpha$ and
$$ \frac{a^+(z)}{a^-(z)} = \lambda \prod_{k=1}^n \frac{\theta(z+\alpha_k\hbar)}{\theta(z+\beta_k\hbar)}. $$
\end{theorem}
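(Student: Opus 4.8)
The proof has the shape of a parametrization theorem: sufficiency is an explicit construction, and necessity extracts the condition from the finiteness hypotheses defining $\BGG$.

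\emph{Sufficiency.} Assume $\frac{a^+(z)}{a^-(z)}=\lambda\prod_{k=1}^{n}\frac{\theta(z+\alpha_k\hbar)}{\theta(z+\beta_k\hbar)}$ with $\sum_k(\alpha_k-\beta_k)=\alpha$. By Example~\ref{example: q-character asymptotic representations} and Equation~\eqref{rel: spectral automorphism}, the asymptotic representation $\CW^{\alpha_k-\beta_k,\,\beta_k-1}$ has highest weight $[\theta(z+\alpha_k\hbar),\theta(z+\beta_k\hbar)]t^{\alpha_k-\beta_k}$. Next, for $\mu\in\BC^\times$, $m\in\BZ$ and $g(x)\in\BM^\times$, I would check that $\BM v$ with $v$ of weight $m\hbar^{-1}$, $L_{++}(z)v=\mu g(z)v$, $L_{--}(z)v=g(z)v$, $L_{+-}(z)v=L_{-+}(z)v=0$, defines a representation of $\CE$ lying in $\BGG$: all $R$-matrix entries occurring in \eqref{rel: RLL explicit} are invariant under $x\mapsto x+m$, so \eqref{rel: RLL explicit} collapses to the scalar identity $\mu g(z)g(w)=\mu g(w)g(z)$; this module has highest weight $[\mu g(z),g(z)]t^{m\hbar^{-1}}$. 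Taking $m=0$, $\mu=\lambda$ and $g(z)=a^-(z)\big/\prod_k\theta(z+\beta_k\hbar)$ (if $n=0$ take instead $g=a^-$, $m\hbar^{-1}=\alpha$), the dynamical tensor product of this one-dimensional module with $\CW^{\alpha_1-\beta_1,\beta_1-1},\dots,\CW^{\alpha_n-\beta_n,\beta_n-1}$ lies in $\BGG$ by Proposition~\ref{prop: tensor product q-character}, and the tensor product of the highest weight vectors is again a highest weight vector: by $\Delta$ in \eqref{rel: coproduct}, $L_{-+}^{X\wtimes Y}(z)=L_{--}^X(z)\wtimes L_{-+}^Y(z)+L_{-+}^X(z)\wtimes L_{++}^Y(z)$ annihilates it, and $L_{\pm\pm}$ act by the product of the scalar eigenvalues, which are independent of $x$ and so commute across the $\wtimes$-relation. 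Its highest weight is $\big[\lambda g(z)\prod_k\theta(z+\alpha_k\hbar),\,g(z)\prod_k\theta(z+\beta_k\hbar)\big]t^{\alpha}=[a^+(z),a^-(z)]t^{\alpha}$. The submodule it generates is a highest weight module, hence has a unique simple quotient, which lies in $\BGG$ and realizes $mt^\alpha$.

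\emph{Necessity.} Let $S$ be simple in $\BGG$ with highest weight vector $v\in S[\alpha]$, so $K_\pm(z)v=a^\pm(z)v$ and $E(z)v=0$ (from Lemma~\ref{lem: highest weight}, $L_{-+}(z)=K_-(z)E(z)$ and invertibility of $K_-(z)$). If $S=\BM v$ is one-dimensional, inspecting \eqref{rel: RLL explicit} — in particular the entries $(i,j,m,n)=(\pm,\mp,\mp,\pm)$ whose $R$-coefficients are not $x$-independent — forces $\alpha\in\hbar^{-1}\BZ$ and $a^+(z)/a^-(z)$ a nonzero constant, which is of the required form. Otherwise $S[\alpha-2]$ is a nonzero finite-dimensional $\BM$-vector space spanned by the vectors $F(w)v$. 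The heart of the argument is the $E$–$F$ commutation relation drawn from \eqref{rel: RLL explicit} together with $L_{+-}(z)=F(z)K_-(z)$, $L_{-+}(z)=K_-(z)E(z)$; evaluating on $v$ and using $F(w)E(z)v=0$ gives $E(z)F(w)v=\gamma(z,w;x)\,v$ with $\gamma$ an explicit combination of $\theta$-functions and of $a^\pm(z),a^\pm(w)$, schematically $\gamma(z,w;x)=\frac{1}{\theta(\hbar)\theta(z-w)}\big(P(z,w;x)\tfrac{a^+(z)}{a^-(z)}-Q(z,w;x)\tfrac{a^+(w)}{a^-(w)}\big)$ with $P,Q$ ratios of $\theta$-functions. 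Writing $F(w)v$ in a fixed $\BM$-basis $F(w_1)v,\dots,F(w_d)v$ of $S[\alpha-2]$ shows that for fixed generic $x$ the functions $z\mapsto\gamma(z,w;x)$, $w\in\BC$, span a $\BC$-space of dimension at most $d$. Comparing pole divisors in $z$ — the left side has a pole at $z=w$ moving with $w$, while a fixed linear combination of $d$ functions has poles only in a finite set modulo $\BZ+\BZ\tau$ — forces $\Lambda(z):=a^+(z)/a^-(z)$ to be a constant times a ratio of two products of equally many factors $\theta(z+\ast\hbar)$. Finally, matching the behaviour of both sides under $z\mapsto z+\tau$, via the quasi-periodicity of $\BR$ and hence of $K_\pm^S(z)$, determines the total shift $\sum(\alpha_k-\beta_k)$ modulo $\hbar^{-1}$ to be $\alpha$; since replacing an $\alpha_k$ or $\beta_k$ by $\alpha_k+\hbar^{-1}$ only multiplies the corresponding $\theta$ by $-1$, one can adjust the representation to get $\sum(\alpha_k-\beta_k)=\alpha$ exactly.

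\emph{Main obstacle.} The technical core is the explicit form of $E(z)F(w)v=\gamma(z,w;x)v$, with precise bookkeeping of the $x$-dependent $\theta$-coefficients produced by the dynamical $R$-matrix, and the subsequent pole/quasi-periodicity analysis turning $\dim_\BM S[\alpha-2]<\infty$ into the ``elliptic rationality'' of $\Lambda$. One must handle carefully the dynamical shifts when moving $x$-dependent scalars past $E(z)\in(D_S)_{0,2}$ and $K_-(z)\in(D_S)_{-1,-1}$, and the fact that $\CM_t$ only records eigenvalues up to an overall constant, so that it is the weight $\alpha$ rather than merely the divisor of $\Lambda$ that must be pinned down — which is exactly the role of the $z\mapsto z+\tau$ comparison.
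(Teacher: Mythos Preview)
Your sufficiency argument is essentially the paper's: tensor asymptotic modules $\CW^{\alpha_k-\beta_k,\beta_k-1}$ with a one-dimensional module of highest weight $[g(z),g(z)]t^0$. (The paper does not need your $[\mu g,g]t^{m\hbar^{-1}}$; the scalar $\lambda$ is absorbed by the equivalence $(a^+,a^-)\equiv(ca^+,c^{-1}a^-)$ defining $\CM$.)

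For necessity the paper takes a much shorter and different route, and your argument has a real gap. Applying \eqref{rel: RLL explicit} with $(i,j)=(m,n)=(-,+)$ to the highest weight vector $v$ the paper obtains
\[
\frac{\theta(z-w+x+\alpha\hbar)}{\theta(x+\alpha\hbar)}\,h(w)-\frac{\theta(z-w+x)}{\theta(x)}\,h(z)=\frac{\theta(z-w)}{\theta(\hbar)}\sum_k\frac{A_k(z;x+\hbar)B_k(w;x)}{a^-(z)a^-(w)},
\]
with $h=a^+/a^-$. It then simply specializes $w=z+1$ and $w=z+\tau$: the factor $\theta(z-w)$ kills the right-hand side entirely, and one reads off $h(z+1)=h(z)$ and $h(z+\tau)=e^{-2\BI\pi\alpha\hbar}h(z)$. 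Any nonzero meromorphic function with these quasi-periods is of the stated theta-ratio form with $\sum_k(\alpha_k-\beta_k)=\alpha$. No finite-dimensionality of $S[\alpha-2]$ is invoked, and no Gauss-decomposition bookkeeping is needed.

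Your pole argument fails at its key step: in $\gamma(z,w;x)=\frac{1}{\theta(\hbar)\theta(z-w)}\bigl(P\,h(z)-Q\,h(w)\bigr)$ the residue at $z=w$ vanishes, since $P(w,w;x)=Q(w,w;x)=1$; so there is no moving pole at $z=w$ to exploit. The genuine moving poles sit at the other lattice translates $z=w+1$ and $z=w+\tau$, and demanding their cancellation (so that the pole locus in $z$ stays in a fixed set) is \emph{exactly} the pair of conditions $h(z+1)=h(z)$, $h(z+\tau)=e^{-2\BI\pi\alpha\hbar}h(z)$. In other words, once repaired, your finite-span/pole argument collapses into the paper's two specializations --- but without having identified the correct residues you have not yet derived anything about $h$. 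The separate quasi-periodicity matching you propose at the end is then redundant.
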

\begin{proof}
Let $S$ be a simple module in $\BGG$ of highest weight $mt^{\alpha}$ with $v \in S[\alpha]$ being a highest weight vector. Choose an $\BM$-basis $(v_1,v_2,\cdots,v_n)$ of $S[\alpha-2]$. Then there exist $A_k(z;x), B_k(z;x)$ meromorphic functions on $z,x$ such that
$$ L_{+-}(z) v = \sum_{k=1}^n A_k(z;x) v_k,\quad L_{-+}(z) v_k = B_k(z;x) v $$
Applying Equation \eqref{rel: RLL explicit} with $(ij) = (-+) = (nm)$ to $v$, we obtain
\begin{align*}
& \frac{\theta(z-w+x+\alpha\hbar)\theta(\hbar)}{\theta(z-w+\hbar)\theta(x+\alpha\hbar)} a^-(z)a^+(w)- \frac{\theta(z-w+x)\theta(\hbar)}{\theta(z-w+\hbar)\theta(x)} a^-(w)a^+(z) \\
=& \sum_{k=1}^n \frac{\theta(z-w)}{\theta(z-w+\hbar)} A_k(z;x+\hbar)B_k(w;x).
\end{align*}
Set $h(z) := \frac{a^+(z)}{a^-(z)}$. Multiply both sides of the identity by $\frac{\theta(z-w+\hbar)}{a^-(z)a^-(w)\theta(\hbar)}$: 
$$ \frac{\theta(z-w+x+\alpha\hbar)}{\theta(x+\alpha\hbar)} h(w) - \frac{\theta(z-w+x)}{\theta(x)} h(z) = \frac{\theta(z-w)}{\theta(\hbar)} \sum_{k=1}^n \frac{A_k(z;x+\hbar)B_k(w;x)}{a^-(z)a^-(w)}. $$
By setting $w = z+1$ and $w=z+\tau$ respectively, we obtain
$$ h(z+1) = h(z),\quad h(z+\tau) = e^{-2 \BI\pi \alpha\hbar} h(z). $$
Such a non-zero meromorphic function must be of the form in the theorem.

The proof of the \lq\lq if\rq\rq\ part is standard, by taking tensor products of the $\CW^{\ell,u}$ and one-dimensional representations of highest weight $[g(z),g(z)]t^0$ for $g(x) \in \BM^{\times}$. 
\end{proof}
For $\alpha,\beta \in \BC$, let $L(\alpha,\beta) := S([\theta(z+\alpha \hbar),\theta(z+\beta \hbar)]t^{\alpha-\beta})$; by Remark \ref{rem: simplicity asymptotic} it is the submodule of $\CW^{\alpha-\beta,\beta-1}$ generated by $w_0$. Define
$$ \Sigma(\alpha,\beta) := \begin{cases}
\{\beta + p \ |\ 0 \leq p \leq l-1, p \in \BZ \} & \mathrm{if}\ \alpha-\beta \in l + \hbar^{-1}(\BZ+\BZ\tau), \ l \in \BZ_{\geq 0}, \\
\{\beta + p\ |\ p \in \BZ_{\geq 0} \} & \mathrm{otherwise}.
\end{cases} $$
If $p > 0$, then $w_p \in L(\alpha,\beta)$ implies $\beta+p-1\in \Sigma(\alpha,\beta)$. 

The following result and its proof are adapted from half of \cite[Proposition 3.6]{M}.
\begin{prop} \label{prop: co-cyclicity}
Let $n \in \BZ_{>0}$ and $\alpha_1,\alpha_2,\cdots,\alpha_n, \beta_1,\beta_2,\cdots,\beta_n \in \BC$ be such that:
 $\alpha_j - \Sigma(\alpha_i,\beta_i)$ and $\hbar^{-1}(\BZ + \BZ\tau)$ do not intersect for all $1\leq i < j \leq n$.
 Then the tensor product $L(\alpha_1,\beta_1) \wtimes L(\alpha_2,\beta_2) \wtimes \cdots \wtimes L(\alpha_n,\beta_n)$ contains a unique non-zero vector (up to scalar product by $\BM$) annihilated by $L_{-+}(z)$.
\end{prop}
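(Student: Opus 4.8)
The plan is to argue by induction on $n$. For $n=1$ the module is $L(\alpha_1,\beta_1)$, which by definition is the submodule of $\CW^{\alpha_1-\beta_1,\beta_1-1}$ generated by $w_0$; from the formulas for $E(z)$ and $K_-(z)=L_{--}(z)$ in the proof of Proposition~\ref{prop: asymptotic representation Gauss decomposition} one reads off $L_{-+}(z)=K_-(z)E(z)$, so $L_{-+}(z)w_0=0$ and $L_{-+}(z)w_j\in\BM^{\times}w_{j-1}$ for $j\ge 1$ (here $\theta(j\hbar)\neq 0$ since $\hbar\BZ$ meets $\BZ+\BZ\tau$ only at $0$). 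Hence any nonzero weight vector killed by $L_{-+}(z)$ is a multiple of $w_0$, and since $L_{-+}(z)$ raises weight by $2$ the kernel of $L_{-+}(z)$ on all of $L(\alpha_1,\beta_1)$ is $\BM w_0$. For the inductive step I write the tensor product as $M=L(\alpha_1,\beta_1)\wtimes M'$ with $M'=L(\alpha_2,\beta_2)\wtimes\cdots\wtimes L(\alpha_n,\beta_n)$. The hypothesis restricted to pairs $2\le i<j\le n$ is exactly what is needed to apply the inductive assumption to $M'$: the vectors of $M'$ annihilated by $L^{M'}_{-+}(z)$ form $\BM v'$, where $v'=w_0\wtimes\cdots\wtimes w_0$ is a highest weight vector of weight $\mu:=\sum_{i\ge 2}(\alpha_i-\beta_i)$, with $L^{M'}_{++}(z)v'=\big(\prod_{i\ge 2}\theta(z+\alpha_i\hbar)\big)v'$ by the description of highest weights after Theorem~\ref{thm: simple modules in O}.

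Next I use the coproduct \eqref{rel: coproduct} and Lemma~\ref{lem: tensor product difference operators}, which give on $M$ the identity $L^M_{-+}(z)=L_{-+}(z)\wtimes L^{M'}_{++}(z)+L_{--}(z)\wtimes L^{M'}_{-+}(z)$, where on the first factor $L_{--}(z)w_p=\theta(z+(\beta_1+p)\hbar)w_p$ and $L_{-+}(z)w_p=c_p(z;x)w_{p-1}$ with $c_0=0$ and $c_p\in\BM^{\times}$ entire in $z$ for $p\ge 1$. I take a weight vector $x=\sum_{p\ge 0}w_p\wtimes x_p$ annihilated by $L^M_{-+}(z)$, the sum being finite because $M'$ has weights bounded above. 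Since the $\BM$-subspaces $w_q\wtimes M'$ are in direct sum in $M$, comparing the components of $L^M_{-+}(z)x$ yields, for every $q\ge 0$,
\[
\widehat c_{q+1}(z;x)\,L^{M'}_{++}(z)x_{q+1}+\theta(z+(\beta_1+q)\hbar)\,L^{M'}_{-+}(z)x_q=0 ,
\]
where $\widehat c_{q+1}$ is $c_{q+1}$ with $x$ shifted by a multiple of $\hbar$ determined by the weight of $x_{q+1}$. Let $P$ be the largest index with $x_P\neq 0$; then $w_P$ occurs in $L(\alpha_1,\beta_1)$, so $\beta_1+P-1\in\Sigma(\alpha_1,\beta_1)$ whenever $P\ge 1$. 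The relation at $q=P$ reads $\theta(z+(\beta_1+P)\hbar)L^{M'}_{-+}(z)x_P=0$, hence $L^{M'}_{-+}(z)x_P=0$, so $x_P=f\,v'$ with $f\in\BM^{\times}$; in particular $x_P$ has weight $\mu$.

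Now suppose $P\ge 1$, aiming for a contradiction. Then $x_{P-1}$ has weight $\mu-2$, and since the top weight space $M'[\mu]$ equals $\BM v'$ we may write $L^{M'}_{-+}(z)x_{P-1}=\chi(z;x)v'$ for a scalar $\chi$; moreover $\chi$ is \emph{entire in $z$}, because every matrix coefficient of $L^{M'}_{ij}(z;x)$ depends on $z$ only through the entire factors $\theta(z+\,\cdot\,)$ occurring in the matrices $L^{\ell}(z;x)$ and is unaffected in its $z$-dependence by the coproduct. Feeding $L^{M'}_{++}(z)x_P=f(x+\hbar)\big(\prod_{i\ge 2}\theta(z+\alpha_i\hbar)\big)v'$ into the relation at $q=P-1$ forces
\[
\chi(z;x)=-\,\frac{\widehat c_P(z;x)\,f(x+\hbar)\prod_{i\ge 2}\theta(z+\alpha_i\hbar)}{\theta(z+(\beta_1+P-1)\hbar)} .
\]
For generic $x$ the right-hand side has a genuine pole in $z$ at each zero of $\theta(z+(\beta_1+P-1)\hbar)$: the entire factor $\widehat c_P(z;x)$ has $x$-dependent zeros and so does not cancel such a pole for generic $x$, while $\prod_{i\ge 2}\theta(z+\alpha_i\hbar)$ cancels it only if $\alpha_i-(\beta_1+P-1)\in\hbar^{-1}(\BZ+\BZ\tau)$ for some $i\ge 2$, which is precisely what is excluded by the hypothesis that $\alpha_i-\Sigma(\alpha_1,\beta_1)$ is disjoint from $\hbar^{-1}(\BZ+\BZ\tau)$ (the case $i=1<j$). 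This contradicts the analyticity of $\chi$ in $z$, since $f\in\BM^{\times}$.

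Therefore $P=0$, so $x=w_0\wtimes x_0$ with $L^{M'}_{-+}(z)x_0=0$, i.e. $x\in\BM\,(w_0\wtimes v')$; and $w_0\wtimes v'=w_0\wtimes\cdots\wtimes w_0$ is visibly annihilated by $L^M_{-+}(z)$ (each tensor factor is, and the cross terms vanish), which gives both existence and uniqueness up to $\BM^{\times}$. The step I expect to be the crux is the descent: producing the closed expression for $\chi$ and identifying the pole in $z$ whose non-cancellation is guaranteed by the genericity hypothesis on $\Sigma(\alpha_1,\beta_1)$; the coproduct bookkeeping, the computation of the relevant $z$-dependences from the matrices $L^{\ell}(z;x)$, and the $z$-analyticity of matrix coefficients are routine.
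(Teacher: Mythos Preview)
Your proof is correct and follows essentially the same inductive strategy as the paper's own argument: decompose the tensor product as $L(\alpha_1,\beta_1)\wtimes M'$, use the coproduct formula for $L_{-+}(z)$ to read off the top $w_P$-component (forcing $x_P$ to be the highest weight vector by induction), and then derive a contradiction from the $w_{P-1}$-component when $P\ge 1$ via the non-intersection hypothesis. The only cosmetic difference is that the paper evaluates the resulting identity directly at $z=-(\beta_1+P-1)\hbar$ and uses that $L_{-+}^{M'}(z)x_{P-1}$ is entire in $z$ to force $\prod_{j\ge 2}\theta((\alpha_j-(\beta_1+P-1))\hbar)=0$, whereas you phrase the same obstruction as a genuine pole of $\chi(z;x)$; these are equivalent. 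Two small points of presentation: it would be cleaner to say explicitly that since $L_{-+}(z)$ is weight-homogeneous its kernel is graded, reducing to weight vectors (you do this in the base case but only implicitly later); and your statement ``$c_p\in\BM^\times$'' is slightly imprecise since $c_p$ depends on $z$---what you use is that $c_p(z;x)$ is entire and nonzero in $z$ for generic $x$, which is exactly right.
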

\begin{proof}
By induction on $n$: for $n = 1$ this is trivial as $L(\alpha_1,\beta_1)$ is simple. Let $n > 1$. Suppose $0\neq v\in \wtimes_{j=1}^n L(\alpha_j,\beta_j)$ is annihilated by $L_{-+}(z)$. Set $\mathcal{L} := \wtimes_{j=2}^n L(\alpha_j,\beta_j)$ and write
$ v = \sum_{r=0}^p w_r \wtimes v_r \in \CW^{\alpha_1-\beta_1,\beta_1-1} \wtimes \mathcal{L}$
with $v_r \in \mathcal{L}$ for $0\leq r \leq p$ and $v_p \neq 0$. Computing the term $w_p \wtimes \mathcal{L}$ in $L_{-+}(z) v = 0$ gives $L_{-+}(z) v_p = 0$. The induction hypothesis applied to $\mathcal{L}$, one may assume $v_p = w_0^{\wtimes n-1}$. If $p = 0$ then $v = w_0^{\wtimes n}$ and we are done. If $p > 0$, then $\beta_1+p-1\in \Sigma(\alpha_1,\beta_1)$. Consider the component $w_{p-1} \wtimes \mathcal{L}$ in $L_{-+}(z)v = 0$: 
\begin{align*}
&  \prod_{j=2}^n \theta(z+\alpha_j\hbar) \left(\frac{\theta(z+(\beta_1-1+p)\hbar-x) \theta(p\hbar)}{\theta(x)}w_{p-1} \wtimes v_p\right) \\
= &\ \theta(z+(p-1+\beta_1)\hbar) \left(w_{p-1}\wtimes L_{-+}(z)v_{p-1}\right).
\end{align*}
By Proposition \ref{prop: finit-dim representations}, $L_{-+}(z) v_{p-1} \in \mathcal{L}$ is entire on $z$. Set $z = - (p-1+\beta_1)\hbar$. Then
$ \prod_{j=2}^n \theta((\alpha_j-(\beta_1+p-1))\hbar) = 0$, and $\alpha_j - (\beta_1+p-1) \in \hbar^{-1}(\BZ + \BZ\tau)$ for some $2\leq j \leq n$,
in contradiction with the assumption as $\beta_1+p-1 \in \Sigma(\alpha_1,\beta_1)$.  
\end{proof}
\begin{prop} \label{prop: cyclicity}
Let $n, \alpha_i,\beta_i$ be as in Proposition \ref{prop: co-cyclicity}. Assume:
$\beta_j + \Sigma(-\beta_i,-\alpha_i)$ and $\hbar^{-1}(\BZ+\BZ\tau)$ do not intersect for all $1\leq i < j \leq n$.
Then the tensor product $L(\alpha_1,\beta_1) \wtimes L(\alpha_2,\beta_2) \wtimes \cdots \wtimes L(\alpha_n,\beta_n)$ is a highest weight module.
\end{prop}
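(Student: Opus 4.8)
The plan is to dualize the co-cyclicity statement of Proposition \ref{prop: co-cyclicity}. Recall that for a representation $X$ in category $\BGG$ one may consider a suitable restricted dual $X^*$ (lowest-weight becomes highest-weight); more concretely, being a highest weight module is equivalent to having a one-dimensional space of vectors annihilated by $L_{-+}(z)$ in the \emph{graded dual}, provided the module is generated in the appropriate sense. The key algebraic observation is that the assignment $X \mapsto X^*$, with the $\CE$-action twisted by the antipode (or, at the level of the $R$-matrix, by the transpose-inverse), converts $L_{-+}$-highest weight vectors into $L_{+-}$-annihilated vectors and vice versa, and that under this duality the simple module $L(\alpha,\beta)$ is sent (up to spectral shift and one-dimensional twist) to a module of the same shape $L(\alpha',\beta')$ whose parameters are obtained from $(\alpha,\beta)$ by the substitution $(\alpha,\beta) \mapsto (-\beta,-\alpha)$ that already appears in the statement of Proposition \ref{prop: cyclicity}. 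This is exactly why the hypothesis of the present proposition is the hypothesis of Proposition \ref{prop: co-cyclicity} with each $(\alpha_i,\beta_i)$ replaced by $(-\beta_i,-\alpha_i)$ and the roles of $\alpha_j, \beta_j$ swapped.

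First I would set up the duality functor on $\BGG$ precisely: given $(X, L_{ij}^X(z))$, define $X^\vee$ on the graded dual $\oplus_\alpha X[\alpha]^*$ with $\CE$-action determined by requiring that the canonical pairing $X^\vee \wtimes X \to \BM$ (or $X \wtimes X^\vee \to \BM$) be a morphism; one checks via the $RLL$ relation \eqref{rel: RLL explicit} that this produces a well-defined object of $\CF$ satisfying conditions (1)--(4), hence an object of $\BGG$, and that a lowest weight vector of $X$ (i.e.\ one annihilated by $L_{+-}(z)$) corresponds to a highest weight vector of $X^\vee$. Next I would compute $L(\alpha,\beta)^\vee$: since $L(\alpha,\beta)$ is the socle submodule of $\CW^{\alpha-\beta,\beta-1}$ generated by $w_0$, and $\CW^{\ell,u}$ has an explicit $L$-matrix, one reads off that $L(\alpha,\beta)^\vee \cong L(-\beta,-\alpha)$ up to tensoring by a one-dimensional $[g(z),g(z)]t^0$ module and a spectral automorphism $\Psi_u$ — neither of which affects the property of being $L_{-+}$-cocyclic or $L_{+-}$-cyclic, and neither of which changes $\Sigma$ up to the relevant shift. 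Then the hypothesis $\beta_j + \Sigma(-\beta_i,-\alpha_i) \cap \hbar^{-1}(\BZ+\BZ\tau) = \emptyset$ is precisely the hypothesis of Proposition \ref{prop: co-cyclicity} applied to the dual modules $L(-\beta_n,-\alpha_n) \wtimes \cdots \wtimes L(-\beta_1,-\alpha_1)$ (note the order reversal forced by duality of tensor products), so that tensor product has a unique $L_{-+}$-annihilated line.

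Finally, I would translate back: if $M := L(\alpha_1,\beta_1)\wtimes\cdots\wtimes L(\alpha_n,\beta_n)$, then $M^\vee$ is (up to the harmless twists above) $L(-\beta_n,-\alpha_n)\wtimes\cdots\wtimes L(-\beta_1,-\alpha_1)$, which by Proposition \ref{prop: co-cyclicity} has a one-dimensional space of $L_{-+}$-annihilated vectors; dualizing once more, $M$ has a one-dimensional space of $L_{+-}$-annihilated vectors, equivalently a unique (up to $\BM$-scalar) lowest weight vector. Combined with condition (2) of Definition \ref{def: category BGG} (weights bounded above) and the fact that $M$ is generated by its highest weight vector $w_0^{\wtimes n}$ together with the $L_{+-}$'s — which one verifies directly since each $L(\alpha_i,\beta_i)$ is cyclic on $w_0$ under $L_{+-}$ and the coproduct \eqref{rel: coproduct} is lower-triangular in the relevant sense — this forces $M$ to be a highest weight module: any proper submodule would contribute a second lowest weight vector.

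\textbf{Main obstacle.} The delicate point is making the duality functor genuinely well-defined on all of $\BGG$ and checking that it carries $L(\alpha,\beta)$ to $L(-\beta,-\alpha)$ with the correct shifts; this requires care with the dynamical parameter $x$ and the moment maps, since the naive dual of a $D_X$-module need not sit in $\CV$ with the same conventions. An alternative, avoiding duality, is a direct induction on $n$ mirroring the proof of Proposition \ref{prop: co-cyclicity}: write a general vector in $M$ as a combination $\sum_r w_r \wtimes v_r$ in the first tensor slot, show that the submodule generated by $w_0^{\wtimes n}$ contains, for each weight, a vector not of the form $\sum_{r>0}$, by playing off the $L_{+-}$-action on the first factor against the induction hypothesis for $\wtimes_{j\ge 2}L(\alpha_j,\beta_j)$, and deriving a contradiction from a $\theta$-function vanishing exactly when some $\beta_j + \Sigma(-\beta_i,-\alpha_i)$ hits the lattice. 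I expect the inductive argument to be the safer route to write out in full, with the duality viewpoint recorded as the conceptual explanation for the shape of the hypotheses.
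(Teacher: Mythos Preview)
Your main approach (duality) is not what the paper does; the paper carries out exactly your ``alternative'' route, a direct induction on $n$ mirroring the proof of Proposition~\ref{prop: co-cyclicity}. The paper in fact records the duality picture in Remark~\ref{rem: dual} as the conceptual reason the two propositions have matching hypotheses, but only with the hedge ``presumably $L(\alpha,\beta)^{\vee} \cong L(-\beta,-\alpha)$'': the twisted dual on $\BGG$ is not constructed there, so your obstacle is real and substantial.

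More seriously, your duality argument as written has a genuine gap. The step ``dualizing once more, $M$ has a one-dimensional space of $L_{+-}$-annihilated vectors'' is incorrect: a vector $\phi \in M^{\vee}$ annihilated by $L_{-+}$ is (after the Cartan twist) a functional vanishing on $\sum_z L_{+-}(z)M$, so uniqueness of such $\phi$ says $\dim\bigl(M/\sum_z L_{+-}(z)M\bigr)=1$, not that $\ker L_{+-}$ in $M$ is one-dimensional. The correct deduction from ``$M^{\vee}$ has a unique highest weight vector $\phi_0$'' is \emph{directly} that $M$ is cyclic on $w_0^{\wtimes n}$: a proper submodule $N\subsetneq M$ would yield $0\neq(M/N)^{\vee}\subseteq M^{\vee}$, whose highest weight vector must equal $\phi_0$ by uniqueness, yet $\phi_0(w_0^{\wtimes n})\neq 0$ while everything in $(M/N)^{\vee}$ vanishes on $N\ni w_0^{\wtimes n}$. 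Instead you invoke ``the fact that $M$ is generated by its highest weight vector $w_0^{\wtimes n}$ \ldots\ which one verifies directly since each $L(\alpha_i,\beta_i)$ is cyclic and the coproduct is lower-triangular'' --- but this is precisely the statement of the proposition, and it is \emph{not} automatic: tensor products of cyclic modules need not be cyclic, which is why a hypothesis on the $\beta_j + \Sigma(-\beta_i,-\alpha_i)$ is needed at all.

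The paper's actual proof lets $S$ be the submodule generated by $w_0^{\wtimes n}$ and shows by induction on $p$ that $w_p \wtimes w_0^{\wtimes n-1}\in S$: one applies $L_{+-}(z)$ to $w_{p-1}\wtimes w_0^{\wtimes n-1}\in S$ and specializes $z=-(\alpha_1-p+1)\hbar$ to kill the term in $w_{p-1}\wtimes\mathcal{L}$; the surviving coefficient $\prod_{j\geq 2}\theta((\beta_j-\alpha_1+p-1)\hbar)\cdot\theta((\alpha_1-\beta_1-p+1)\hbar)$ is nonzero exactly because $-\alpha_1+p-1\in\Sigma(-\beta_1,-\alpha_1)$ and the hypothesis (for $i=1<j$) forbids $\beta_j+(-\alpha_1+p-1)\in\hbar^{-1}(\BZ+\BZ\tau)$. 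Then $L(\alpha_1,\beta_1)\wtimes w_0^{\wtimes n-1}\subseteq S$, and since the set of $v\in\mathcal{L}:=\wtimes_{j\geq 2}L(\alpha_j,\beta_j)$ with $L(\alpha_1,\beta_1)\wtimes v\subseteq S$ is a submodule containing the generator $w_0^{\wtimes n-1}$ (induction hypothesis), one gets $S=M$.
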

\begin{proof}
Use induction on $n$. For $n = 1$ this is obvious. Let $n > 1$. Assume that $\mathcal{L} := \wtimes_{j=2}^{n} L(\alpha_j,\beta_j)$ is of highest weight, with highest weight vector $w_0^{\wtimes n-1}$.

$w_0 \wtimes w_0^{\wtimes n-1}$ being a highest weight vector of $L(\alpha_1,\beta_1) \wtimes \mathcal{L}$ generates a sub-$\CE$-module $S$. Let us prove by induction on $p \in \BZ_{\geq 0}$ with $w_p \in L(\alpha_1,\beta_1)$ that $w_p \wtimes w_0^{\wtimes n-1} \in S$.  For $p = 0$ this is trivial. Let $p > 0$ and $w_p \in L(\alpha_1,\beta_1)$. Then $\beta_1+p-1 \in \Sigma(\alpha_1,\beta_1)$ and $\theta((\alpha_1-\beta_1-p+1)\hbar) \neq 0$. By induction hypothesis $w_{p-1} \wtimes w_0^{\wtimes n-1} \in S$. Applying $L_{+-}(z)$ to this vector gives
\begin{align*}
& \frac{\theta(z+(\alpha_1-p+1)\hbar)\theta(x+(1-p)\hbar)\theta(x+(\alpha_1-\beta_1-p+2)\hbar)}{\theta(x)\theta(x+(\alpha_1-\beta_1-2p+3))} w_{p-1} \wtimes L_{+-}(z) w_0^{\wtimes n-1} \\
& + \prod_{j=2}^{n}\theta(z+\beta_j\hbar) \left(\frac{\theta(z+x+(\alpha_1-p)\hbar) \theta((\alpha_1-\beta_1-p+1)\hbar)}{\theta(x+(\alpha_1-\beta_1-2p+1)\hbar)}  w_p \wtimes w_0^{\wtimes n-1} \right).
\end{align*}
$L_{+-}(z) w_0^{\wtimes n-1} \in \mathcal{L}$ being entire on $z$, set $z = -(\alpha_1-p+1)\hbar$ so that the first term vanishes. By assumption, the second term is non-zero. So $w_p \wtimes w_0^{\wtimes n-1} \in S$. 

This proves $L(\alpha_1,\beta_1) \wtimes w_0^{\wtimes n-1}  \subseteq S$. The vectors $v \in \mathcal{L}$ such that $L(\alpha_1,\beta_1) \otimes v \subseteq S$ form a submodule $\mathcal{L}'$ of $\mathcal{L}$. Since $\mathcal{L}$ is generated by $w_0^{\wtimes n-1} \in \mathcal{L}'$, we have that $\mathcal{L}'= \mathcal{L}$ and $\wtimes_{j=1}^n L(\alpha_j,\beta_j)$ is a highest weight module.  
\end{proof}
Our proof is similar to those of \cite[Lemma 4.10]{CP} and \cite[Theorem 4.6]{MY} for quantum affine $\mathfrak{sl}_2$. The Vandermonde determinant arguments in {\it loc. cit.} can be simplified and strengthened by Weyl modules, as indicated in the proof of \cite[Theorem 2.6(iii)]{Chari}; see a closer situation in \cite[Proposition 5.2]{Z1}. 
\begin{rem} \label{rem: dual}
One might define the {\it twisted dual} $X^{\vee}$ of $X \in \BGG$ as in \cite[\S 3]{Z2} and \cite[\S 7]{Z1}: the pullback of the usual Hopf dual by a Cartan involution which permutes highest weights and lowest weights. Presumably $L(\alpha,\beta)^{\vee} \cong L(-\beta,-\alpha)$ up to tensor product by one-dimensional modules, and Propositions \ref{prop: co-cyclicity}--\ref{prop: cyclicity} are dual to each other. The Hopf dual of $\CE$-modules was discussed in \cite[\S 11]{FV1}.
\end{rem}
The following result is parallel to \cite[Proposition 3.6]{M} and \cite[Corollary 4.7]{MY}.
\begin{cor}  \label{cor: tensor product factorization simple O}
In Theorem \ref{thm: simple modules in O}, there exist a rearrangement of the $\alpha_k,\beta_k$ and a one-dimensional $\CE$-module $D$ of weight zero such that the simple module of highest weight $mt^{\alpha}$ is isomorphic to $D\wtimes L(\alpha_1,\beta_1) \wtimes L(\alpha_2,\beta_2) \wtimes \cdots \wtimes L(\alpha_n,\beta_n)$. 
\end{cor}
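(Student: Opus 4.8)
The plan is to realise $S(mt^\alpha)$, up to a one-dimensional twist, as a single tensor product $L(\alpha_1,\beta_1)\wtimes\cdots\wtimes L(\alpha_n,\beta_n)$ in a carefully chosen order; the point is the following simplicity criterion, which combines Propositions \ref{prop: co-cyclicity} and \ref{prop: cyclicity}. If $M\in\BGG$ is a highest weight module whose set of vectors killed by all $L_{-+}(z)$ is a single $\BM$-line, then $M$ is simple: for a non-zero sub-$\CE$-module $N\subseteq M$ one has $N\in\BGG$, so by Definition \ref{def: category BGG}(2) the set $\wt(N)$ has a maximal element $\delta$ (i.e.\ $\delta+2\notin\wt(N)$), whence $L_{-+}(z)N[\delta]\subseteq N[\delta+2]=0$ and, since $(N[\delta],K_+,K_-)\in\CFm$, there is a non-zero $K_{\pm}$-eigenvector $u\in N[\delta]$, i.e.\ a highest weight vector; by hypothesis $u$ spans the unique $L_{-+}$-kernel line, which also contains the highest weight generator of $M$, so $N=M$. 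Consequently, if an ordering $\sigma$ satisfies the hypotheses of both Proposition \ref{prop: co-cyclicity} (making the $L_{-+}$-kernel one-dimensional) and Proposition \ref{prop: cyclicity} (making the product a highest weight module generated by $w_0^{\wtimes n}$), then $N_\sigma:=L(\alpha_{\sigma(1)},\beta_{\sigma(1)})\wtimes\cdots\wtimes L(\alpha_{\sigma(n)},\beta_{\sigma(n)})$ is simple.

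Next I would identify the twist. For any ordering, $w_0^{\wtimes n}$ is a highest weight vector, so $N_\sigma$ has highest weight $\bigl[\prod_k\theta(z+\alpha_k\hbar),\,\prod_k\theta(z+\beta_k\hbar)\bigr]t^{\alpha}$, using $\alpha=\sum_k(\alpha_k-\beta_k)$. Fix a square root $\mu$ of $\lambda$, set $g(z):=\frac{\mu\,a^-(z)}{\prod_k\theta(z+\beta_k\hbar)}\in\BM^{\times}$, and let $D$ be the one-dimensional $\CE$-module of weight $0$ with highest weight $[g(z),g(z)]t^0$ furnished by the \lq\lq if\rq\rq\ part of Theorem \ref{thm: simple modules in O}. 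Using $a^+(z)/a^-(z)=\lambda\prod_k\theta(z+\alpha_k\hbar)/\theta(z+\beta_k\hbar)$ one checks $\bigl[g(z)\prod_k\theta(z+\alpha_k\hbar),\,g(z)\prod_k\theta(z+\beta_k\hbar)\bigr]=[a^+(z),a^-(z)]$, so $D\wtimes N_\sigma$ has highest weight $mt^\alpha$. Since $D\wtimes(-)$ is an auto-equivalence of $\BGG$ (quasi-inverse $D^{-1}\wtimes(-)$, with $D^{-1}$ of highest weight $[g(z)^{-1},g(z)^{-1}]t^0$), it preserves simple objects, and then Lemma \ref{lem: highest weight} gives $D\wtimes N_\sigma\cong S(mt^\alpha)$ once $N_\sigma$ is known to be simple. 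Thus it suffices to choose the $\alpha_k,\beta_k$ -- subject only to $\sum_k(\alpha_k-\beta_k)=\alpha$ and the product formula for $a^+/a^-$, which is all that is used of Theorem \ref{thm: simple modules in O} -- together with an ordering $\sigma$ making $N_\sigma$ simple.

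The remaining, and central, step is the combinatorics of this choice. Two moves preserve the relevant data: replacing $(\alpha_k,\beta_k)$ by $(\alpha_k+\hbar^{-1},\beta_k+\hbar^{-1})$ leaves $L(\alpha_k,\beta_k)$, $a^+/a^-$ and $\alpha$ unchanged; and a \lq\lq de-linking\rq\rq\ move replacing two pairs $(\alpha_i,\beta_i),(\alpha_j,\beta_j)$ by $(\alpha_i,\beta_j),(\alpha_j,\beta_i)$ preserves $\prod_k\theta(z+\alpha_k\hbar)/\theta(z+\beta_k\hbar)$ and $\sum_k(\alpha_k-\beta_k)$. Encode $L(\alpha_k,\beta_k)$ by the segment $\Sigma(\alpha_k,\beta_k)$ -- a finite interval or a ray inside the coset $\beta_k+\BZ\pmod{\hbar^{-1}(\BZ+\BZ\tau)}$, with its right end governed by $\alpha_k$. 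The plan is: (i) apply de-linking moves until, inside each coset, the segments form a laminar family (pairwise nested or disjoint); (ii) order the pairs so that a segment nested in another comes first, disjoint segments come left-to-right, and segments in different cosets are ordered arbitrarily. One then checks, from the explicit description of $\Sigma$, that for $i<j$ both $\bigl(\alpha_j-\Sigma(\alpha_i,\beta_i)\bigr)\cap\hbar^{-1}(\BZ+\BZ\tau)=\varnothing$ and $\bigl(\beta_j+\Sigma(-\beta_i,-\alpha_i)\bigr)\cap\hbar^{-1}(\BZ+\BZ\tau)=\varnothing$, i.e.\ the hypotheses of Propositions \ref{prop: co-cyclicity} and \ref{prop: cyclicity} both hold, and the criterion above yields $N_\sigma\cong S(mt^\alpha)$, which relabelled is the assertion. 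The main obstacle is precisely this last step: proving that the de-linking moves terminate and that a single laminar ordering meets both conditions simultaneously, with the ray (infinite-dimensional) segments handled as well -- this is the elliptic analogue of the segment/$q$-string orderings in \cite[Lemma 4.10]{CP}, \cite[Theorem 4.6]{MY}, \cite[Proposition 3.6]{M} and \cite[Proposition 5.2]{Z1}.
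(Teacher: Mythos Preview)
Your framework coincides with the paper's: the same simplicity criterion (highest weight plus one-dimensional $L_{-+}$-kernel), the same one-dimensional twist $D$, and the same reduction to finding an ordering satisfying the hypotheses of Propositions~\ref{prop: co-cyclicity} and~\ref{prop: cyclicity} simultaneously. The only difference is the combinatorial step, which you leave as the ``main obstacle'' via a laminar/de-linking plan, whereas the paper dispatches it in one stroke.

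The paper's device is the integer-part projection
\[
\lfloor\,\cdot\,\rfloor:\ \BZ+\hbar^{-1}(\BZ+\BZ\tau)\longrightarrow\BZ,\qquad n+\hbar^{-1}x\ \longmapsto\ n,
\]
which is well defined by the standing assumption $\hbar\BZ\cap(\BZ+\BZ\tau)=\{0\}$. One then rearranges the $\alpha_k,\beta_k$ greedily: at step $k$, set
\[
X_k=\{\alpha_p-\beta_q:\,k\le p,q\le n\}\cap\bigl(\BZ+\hbar^{-1}(\BZ+\BZ\tau)\bigr),
\]
and, if $X_k\neq\varnothing$, choose $\alpha_k,\beta_k$ so that $\alpha_k-\beta_k\in X_k$ has minimal $\lfloor\,\cdot\,\rfloor$. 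This replaces your termination and laminarity arguments entirely: your de-linking moves are exactly the freedom exploited by this greedy choice, and the ``nested before containing, left before right'' rule you sketch is what minimizing $\lfloor\alpha_k-\beta_k\rfloor$ produces. Verifying the two propositions' hypotheses for $i<j$ then reduces to comparing $\lfloor\alpha_j-\beta_i\rfloor$ (resp.\ $\lfloor\alpha_i-\beta_j\rfloor$) with $\lfloor\alpha_i-\beta_i\rfloor$, following Molev \cite[after~(3.19)]{M}. So rather than building the segment/laminar machinery, you can simply invoke this integer-part ordering and check the two inequalities.
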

\begin{proof}
We follow the arguments right after \cite[(3.19)]{M}. Notice that the integer part $\lfloor z\rfloor$ of an element $z \in \BZ + \hbar^{-1}(\BZ+\BZ\tau)$ is well-defined by the projection 
$$ \BZ + \hbar^{-1} (\BZ+\BZ\tau) \longrightarrow \BZ, \quad n +\hbar^{-1} x \mapsto n \quad \mathrm{for}\ n \in \BZ,\ x \in \BZ+\BZ\tau. $$
One can rearrange the $\alpha_k,\beta_k$ such that: for every $1\leq k \leq n$, if 
$$X_k := \{\alpha_p-\beta_q\ |\ k \leq p, q \leq n \} \cap (\BZ + \hbar^{-1} (\BZ+\BZ\tau)) \neq \emptyset, $$
then $\alpha_k - \beta_k \in X_k$ and $\lfloor \alpha_k - \beta_k \rfloor \leq \lfloor z \rfloor$ for all $z \in X_k$.
The conditions in Propositions \ref{prop: co-cyclicity}--\ref{prop: cyclicity} are fulfilled. $\wtimes_{k=1}^n L(\alpha_k,\beta_k)$ is of highest weight and contains a unique highest weight vector; it must be simple by Lemma \ref{lem: highest weight}.

Set $g(z) = a^+(z) \prod_{k=1}^n \theta(z+\alpha_k\hbar)^{-1}$ and let $D$ be the simple module of highest weight $[g(z),g(z)]$. Then $D$ is one-dimensional, and $D \wtimes (\wtimes_{k=1}^n L(\alpha_k,\beta_k))$ is a simple module of highest weight $mt^{\alpha}$. This completes the proof.
\end{proof}
As a consequence, we obtain a highest weight classification of finite-dimensional simple modules in $\BGG$; its proof is identical to that of \cite[Proposition 3.7]{M}.
\begin{cor} \label{cor: finite-dimensional simple module O}
The simple module in Theorem \ref{thm: simple modules in O} is finite-dimensional if and only if: after a rearrangement of the $\alpha_k,\beta_k$ we have $\alpha_k - \beta_k \in \BZ_{\geq 0} + \hbar^{-1} (\BZ+\BZ\tau)$ for all $1\leq k \leq n$.
\end{cor}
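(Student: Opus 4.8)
The plan is to read off finite-dimensionality from the tensor-product factorization of Corollary~\ref{cor: tensor product factorization simple O}, using only an elementary dimension count. First I would note that $\dim_{\BM}(X\wtimes Y)=\dim_{\BM}(X)\cdot\dim_{\BM}(Y)$ for $X,Y\in\CV$: each space $X[\alpha]\wtimes Y[\beta]$ is, by its construction in Section~\ref{ss-h algebras}, a quotient of $X[\alpha]\otimes_{\BC}Y[\beta]$ which is still an $\BM$-vector space of dimension $\dim_{\BM}X[\alpha]\cdot\dim_{\BM}Y[\beta]$ (with $\BM$-basis $v_i\wtimes w_j$ for $\BM$-bases $(v_i),(w_j)$ of $X[\alpha],Y[\beta]$). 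In particular tensoring with a one-dimensional module does not change $\BM$-dimensions and, being invertible in $K_0(\BGG)$, preserves simplicity and shifts highest weights by a class $[g(z),g(z)]$. Hence, writing the simple module of highest weight $mt^{\alpha}$ as $D\wtimes L(\alpha_1,\beta_1)\wtimes\cdots\wtimes L(\alpha_n,\beta_n)$ via Corollary~\ref{cor: tensor product factorization simple O}, it is finite-dimensional if and only if every factor $L(\alpha_k,\beta_k)$ is.

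Next I would compute $\dim_{\BM}L(\alpha,\beta)$ from Remark~\ref{rem: simplicity asymptotic}. Recall $L(\alpha,\beta)$ is the sub-$\CE$-module of $\CW^{\alpha-\beta,\beta-1}$ generated by $w_0$. If $\alpha-\beta\in\BZ_{\geq 0}+\hbar^{-1}(\BZ+\BZ\tau)$, write $\alpha-\beta\in l+\hbar^{-1}(\BZ+\BZ\tau)$ with $l=\lfloor\alpha-\beta\rfloor\in\BZ_{\geq 0}$; then $V^{\alpha-\beta}=\oplus_{j=0}^{l}\BM w_j$ is the simple socle of $\CW^{\alpha-\beta,\beta-1}$, it contains $w_0$, so $L(\alpha,\beta)=V^{\alpha-\beta}$ has $\BM$-dimension $l+1$. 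If $\alpha-\beta\notin\BZ_{\geq 0}+\hbar^{-1}(\BZ+\BZ\tau)$, then $\CW^{\alpha-\beta,\beta-1}$ is already simple, so $L(\alpha,\beta)=\CW^{\alpha-\beta,\beta-1}$ is infinite-dimensional. Thus $L(\alpha,\beta)$ is finite-dimensional precisely when $\alpha-\beta\in\BZ_{\geq 0}+\hbar^{-1}(\BZ+\BZ\tau)$. The ``only if'' direction then follows at once: if the simple module of highest weight $mt^{\alpha}$ is finite-dimensional, each $L(\alpha_k,\beta_k)$ in its Corollary~\ref{cor: tensor product factorization simple O} factorization is finite-dimensional, so the rearranged tuple $(\alpha_k,\beta_k)$---which depends only on the multisets of the $\alpha_k$ and the $\beta_k$, hence is again an admissible parameter tuple for $mt^{\alpha}$ in Theorem~\ref{thm: simple modules in O}---satisfies $\alpha_k-\beta_k\in\BZ_{\geq 0}+\hbar^{-1}(\BZ+\BZ\tau)$ for all $k$.

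For the ``if'' direction I would pass to a subquotient rather than invoke Corollary~\ref{cor: tensor product factorization simple O} again. Given a rearrangement with all $\alpha_k-\beta_k\in\BZ_{\geq 0}+\hbar^{-1}(\BZ+\BZ\tau)$, the tensor product $M:=L(\alpha_1,\beta_1)\wtimes\cdots\wtimes L(\alpha_n,\beta_n)$ is finite-dimensional by the first paragraph; the vector $w_0\wtimes\cdots\wtimes w_0\in M$ is a highest weight vector of highest weight $[\prod_k\theta(z+\alpha_k\hbar),\prod_k\theta(z+\beta_k\hbar)]t^{\alpha}$ (using $\sum_k\alpha_k-\sum_k\beta_k=\alpha$ from Theorem~\ref{thm: simple modules in O}), so it generates a highest weight submodule of $M$, whose unique simple quotient is finite-dimensional; twisting that simple quotient by a suitable one-dimensional module yields the simple module of highest weight $mt^{\alpha}$, which is therefore finite-dimensional. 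Putting the two directions together gives the statement. The only point needing genuine care---handled exactly as in \cite[Proposition~3.7]{M}---is the bookkeeping relating the various admissible parameter tuples attached to a given highest weight in Theorem~\ref{thm: simple modules in O}, the tuple produced by Corollary~\ref{cor: tensor product factorization simple O}, and the one-dimensional twists connecting them; I expect this to be the main, albeit routine, obstacle, everything else being formal once Corollary~\ref{cor: tensor product factorization simple O} and the dimension formula are in hand.
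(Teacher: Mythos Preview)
Your proposal is correct and follows the same line as the paper, which simply notes that the proof ``is identical to that of \cite[Proposition 3.7]{M}'' and gives no further details. You have in effect written out that argument: use the factorization of Corollary~\ref{cor: tensor product factorization simple O} together with the dichotomy of Remark~\ref{rem: simplicity asymptotic} to reduce both directions to the finite-dimensionality of each factor $L(\alpha_k,\beta_k)$; your dimension identity $\dim_{\BM}(X\wtimes Y)=\dim_{\BM}X\cdot\dim_{\BM}Y$ and the highest weight subquotient argument for the ``if'' direction are exactly the expected steps.
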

In \cite[Theorem 4.11]{K1} there is a similar classification in the formal setting, based on the link between quantum affine algebras and elliptic quantum groups via difference equations. See also \cite[Theorem 5.1]{C} for a different approach.
\begin{cor} \label{cor: TQ Grothendieck ring}
Let $X \in \BGG$ be finite-dimensional. In the fractional ring of $K_0(\BGG)$, $[X]$ is a Laurent polynomial in the $(\frac{[\CW^{1,u}]}{[\CW^{0,u}]})_{u\in \BC}$ whose coefficients are $\BZ$-linear combinations of the isomorphism classes of one-dimensional objects in $\BGG$.
\end{cor}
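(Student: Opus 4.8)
The plan is to reduce the assertion to an explicit identity in the ring $\CM_t$ and then pull it back through the injective ring homomorphism $\qc$ of Corollary \ref{cor: q-character ring homo}. Since $X$ is finite-dimensional, $[X]=\sum_{S}m_{S,X}[S]$ is a finite sum over finite-dimensional simple modules $S$. By Corollary \ref{cor: tensor product factorization simple O} each such $S$ is isomorphic to $D\wtimes L(\alpha_1,\beta_1)\wtimes\cdots\wtimes L(\alpha_n,\beta_n)$ with $D$ one-dimensional, and comparing dimensions with Corollary \ref{cor: finite-dimensional simple module O} forces every factor $L(\alpha_k,\beta_k)$ to be finite-dimensional, i.e. $\alpha_k-\beta_k\in\BZ_{\geq 0}+\hbar^{-1}(\BZ+\BZ\tau)$. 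As one-dimensional classes are units in $K_0(\BGG)$ and $\qc$ is multiplicative, it is enough to prove the statement for a single $[L(\alpha,\beta)]$ with $\alpha-\beta=\bar{l}+\hbar^{-1}\nu$, $\bar{l}\in\BZ_{\geq 0}$, $\nu\in\BZ+\BZ\tau$ (this decomposition being unique because $\hbar\BZ\cap(\BZ+\BZ\tau)=\{0\}$).

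Write $Y_u:=[\CW^{1,u}]/[\CW^{0,u}]$. By Example \ref{example: q-character asymptotic representations} together with \eqref{rel: spectral automorphism}, $\qc(\CW^{1,u})$ and $\qc(\CW^{0,u})$ equal the same infinite series $m(z+u\hbar)$ multiplied by a single monomial of $\CM$, so the series cancels in the ratio and $\qc(Y_u)=\left[\frac{\theta(z+(u+2)\hbar)}{\theta(z+(u+1)\hbar)},1\right]t^{1}$. In particular $\qc$ sends $[\CW^{0,u}]$ and $[\CW^{1,u}]$ to units of $\CM_t$; hence they are non-zero-divisors in $K_0(\BGG)$, the $Y_u$ make sense in the fraction ring, and $\qc$ extends to an injective ring homomorphism on the localisation of $K_0(\BGG)$ at the $[\CW^{0,u}]$ and $[\CW^{1,u}]$. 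On the other side, by Remark \ref{rem: simplicity asymptotic} the module $L(\alpha,\beta)$ is the socle $\bigoplus_{j=0}^{\bar{l}}\BM w_j$ of $\CW^{\alpha-\beta,\beta-1}$, and since $K_{\pm}$ act diagonally in the basis $(w_j)$, Proposition \ref{prop: asymptotic representation Gauss decomposition} gives
\[ \qc(L(\alpha,\beta))=\sum_{j=0}^{\bar{l}}\left[\frac{\theta(z+\alpha\hbar)\,\theta(z+(\beta-1)\hbar)}{\theta(z+(\beta-1+j)\hbar)},\ \theta(z+(\beta+j)\hbar)\right]t^{\alpha-\beta-2j}, \]
whose $j$-th summand we denote $M_j$.

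The heart of the argument is to rewrite each $M_j$ as $\qc(D_j)\,Q_j$ with $D_j$ a one-dimensional module in $\BGG$ and $Q_j$ a Laurent monomial in the $\qc(Y_u)$. First peel off the weight-zero one-dimensional factor $[\theta(z+(\beta+j)\hbar),\theta(z+(\beta+j)\hbar)]t^{0}$; what remains is a balanced quotient of theta-functions with trivial second component. I would then split its numerator and denominator according to whether the differences of the arguments lie in $\BZ$ or in $\hbar^{-1}(\BZ+\BZ\tau)$. The parts with integer difference telescope into products of $\qc(Y_u)^{\pm 1}$, because $\theta(z+(w+2)\hbar)/\theta(z+(w+1)\hbar)$ is the first component of $\qc(Y_w)$; the single residual factor $\theta(z+(\beta+j)\hbar+\nu)/\theta(z+(\beta+j)\hbar)$ has argument-difference in $\hbar^{-1}(\BZ+\BZ\tau)$ and, by Theorem \ref{thm: simple modules in O}, is the $q$-character of a one-dimensional module, namely a twist of $L(\beta+j+\hbar^{-1}\nu,\beta+j)$. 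The weight bookkeeping is forced to match precisely because $\hbar\BZ\cap(\BZ+\BZ\tau)=\{0\}$ makes the integer part well-defined on $\BZ+\hbar^{-1}(\BZ+\BZ\tau)$. Summing over $j$ and applying the localised injective $\qc$ in reverse gives $[L(\alpha,\beta)]=\sum_{j=0}^{\bar{l}}[D_j]\,Q_j$ in the fraction ring; substituting this into $[X]=\sum_S m_{S,X}[S]$ and expanding the tensor products of one-dimensional classes yields the asserted Laurent polynomial.

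The step I expect to be the main obstacle is the bookkeeping just described: organising the four theta-factors of $M_j$ so that exactly the correct sub-product telescopes into the variables $Y_u$, while the leftover lattice shift is realised by an honest one-dimensional object of $\BGG$ — the latter being where Theorem \ref{thm: simple modules in O} is genuinely used. The reduction to the modules $L(\alpha,\beta)$, the identification of their $q$-characters, and the final lifting are then routine consequences of results already established.
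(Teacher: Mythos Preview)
Your proposal is correct and is essentially the paper's argument: compute the $q$-character of each finite-dimensional building block and rewrite every summand as the $q$-character of a one-dimensional module times a Laurent monomial in the $\qc(Y_u)$, then lift via the injective $\qc$. The paper is slightly more economical in that it absorbs your lattice shift $\nu$ into the reduction step (so that one may take $X=V^l$ with $l\in\BZ_{\geq 0}$) and then records the answer in the closed form
\[
\qc(V^l)=\sum_{j=0}^l \qc(D_j)\,\frac{\qc(\CW^l)}{\qc(\CW^j)}\,\frac{\qc(\CW^{-1})}{\qc(\CW^{j-1})},
\]
after which the telescoping of each ratio $\qc(\CW^a)/\qc(\CW^b)$ into a product of $\qc(Y_u)^{\pm1}$ is immediate.
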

\begin{proof}
Since $[X] \in K_0(\BGG)$ is a sum of isomorphism classes of finite-dimensional simple objects, in view of Corollaries \ref{cor: tensor product factorization simple O}--\ref{cor: finite-dimensional simple module O}, we may assume $X = V^l$ where $l \in \BZ_{\geq 0}$ and $V^l$ is the simple socle of $\CW^l$; see Remark \ref{rem: simplicity asymptotic}. By Example \ref{example: q-character asymptotic representations},
\begin{align*}
\qc(V^l) &= \sum_{j=0}^l \left[\frac{\theta(z+(l+1)\hbar) \theta(z)}{\theta(z+j\hbar)}, \theta(z+(j+1)\hbar)\right] t^{l - 2j} \\
&= \sum_{j=0}^l \qc(D_j) \frac{\qc(\CW^l)}{\qc(\CW^j)} \frac{\qc(\CW^{-1})}{\qc(\CW^{j-1})}.
\end{align*}
$D_j$ is the one-dimensional $\CE$-module of highest weight $[\theta(z+(j+1)\hbar),\theta(z+(j+1)\hbar]$ for $0\leq j \leq l$. Replacing $\qc(?)$ by $[?]$ we obtain the desired result.
\end{proof}
Corollary \ref{cor: TQ Grothendieck ring} corresponds to \cite[Theorem 4.8]{FH}, and can be viewed as generalized Baxter relations in the category $\BGG$.
\section{Transfer matrices and functional relations}\label{s-TQ} 
Fix an even positive integer $L  \in 2\BZ_{>0}$ and $a_1,a_2,\cdots,a_L \in \BC\setminus (\BZ+\BZ\tau)$. Let $\BV_L$ be the $\BM$-linear subspace of $\BV^{\otimes L}$ spanned by the $v_{i_1} \otimes v_{i_2} \otimes \cdots \otimes v_{i_L} =: v_{\underline{i}}$ such that $\sum_{l=1}^L i_l = 0$; i.e., there are as many positive $i_k$s as negative $i_k$s. Following \cite{FV2} we construct an action of $K_0(\BGG)$ on $\BV_L$ by difference operators.

Introduce formal variables $p^{\alpha}$ for $\alpha \in \BC$ such that $p^{\alpha}p^{\beta} = p^{\alpha+\beta}$. For $\alpha \in \BC$, define $T_{\alpha} \in (D_{\BV_L})_{-\alpha,-\alpha}$ by $T_{\alpha}(v_{\underline{i}}) = v_{\underline{i}}$; see also \cite[\S 4.2]{EV}.

Let $X \in \BGG$. Associate to two basis vectors $v_{\underline{i}}, v_{\underline{j}} \in \BV_L$ the operator 
$$ L_{\underline{i}\underline{j}}^X(z) := L_{i_1j_1}(z+a_1-\hbar) L_{i_2j_2}(z+a_2-\hbar) \cdots L_{i_Lj_L}(z+a_L-\hbar) \in (D_X)_{0,0}. $$ 
Since $(D_X)_{0,0} \subseteq \End_{\BM}(X)$, one can take trace of $L_{\underline{i}\underline{j}}^X(z)$ over weight spaces.
\begin{defi} \label{def: transfer matrix}
The {\it transfer matrix} associated to $X \in \BGG$ is the formal sum
$$ t_X(z;p) :=  \sum_{\alpha\in \wt(X)} p^{\alpha} T_{\alpha} \sum_{\underline{i},\underline{j}} E_{\underline{i}\underline{j}} \mathrm{Tr}_{X[\alpha]}\left(L_{\underline{i}\underline{j}}^X(z)|_{X[\alpha]}\right).  $$
Here the $E_{\underline{i}\underline{j}} \in \End_{\BM}(\BV_L)$ are elementary matrices associated to the $v_{\underline{i}}$.
\end{defi}
The coefficients of the $p^{\alpha}T_{\alpha}$ are $\End_{\BM}(\BV_L)$-valued meromorphic functions on $z \in \BC$. When $X$ is finite-dimensional, one may take $p = e^{w}$ for a certain complex number $w$ and $p^{\alpha} = e^{w\alpha}$. Then $t_X(z;p) \in D_{\BV_L} \subseteq \End_{\BC}(\BV_L)$.
\begin{rem}
Consider the $\CE$-module $\BV$ constructed after Equation \eqref{rel: RLL explicit}. $t_{\BV}(z;1)$ can be identified with the transfer matrix $T(z)$ in \cite[Theorem 1]{FV2} with $W$ being the tensor product $\Psi_{a_1\hbar^{-1}}^*(\BV) \wtimes \Psi_{a_2\hbar^{-1}}^*( \BV) \wtimes \cdots \wtimes \Psi_{a_L\hbar^{-1}}^*( \BV)$.
\end{rem}
\begin{example} \label{ex: one-dim transfer}
Let $D \in \mathcal{O}$ be one-dimensional of highest weight $mt^{\alpha}$. By Theorem \ref{thm: simple modules in O} and Corollary \ref{cor: tensor product factorization simple O}, there exists $g(x) \in \BM^{\times}$ such that $mt^{\alpha} = [g(z),g(z)]t^0$.  It follows that $t_D(z;p)= \prod_{l=1}^Lg(z+a_l-\hbar)$.
\end{example}
\begin{prop} \label{prop: transfer matrices}
Let $X,Y \in \BGG$ and $\ell,u \in \BC$. The following equations hold.
\begin{itemize}
\item[(i)] $t_{\Psi_u^*X}(z;p) = t_X(z+u\hbar;p)$.
\item[(ii)] $t_X(z;p) t_Y(z;p) = t_{X\wtimes Y}(z;p)$.
\item[(iii)] $t_{\CW^{\ell}}(z;p) t_{\CW^0}(z+u\hbar;p) = t_{\CW^{\ell-u}}(z+u\hbar;p) t_{\CW^u}(z;p)$. 
\item[(iv)] $t_X(z;p)t_Y(w;p) = t_Y(w;p)t_X(z;p)$.
\end{itemize}
\end{prop}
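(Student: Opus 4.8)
The plan is to establish (i) and (ii) by direct computation and then deduce (iii) and (iv) formally from (i), (ii) and the ring structure of $K_0(\BGG)$ obtained in Section~\ref{s-BGG}. For (i), pulling back along $\Psi_u$ replaces each factor $L_{i_lj_l}(z+a_l-\hbar)$ of $L^X_{\underline i\underline j}(z)$ by $L_{i_lj_l}(z+a_l-\hbar+u\hbar)$ and leaves the weight grading of $X$ untouched, so $L^{\Psi_u^*X}_{\underline i\underline j}(z)=L^X_{\underline i\underline j}(z+u\hbar)$ in $(D_X)_{0,0}$; taking traces over the unchanged weight spaces gives $t_{\Psi_u^*X}(z;p)=t_X(z+u\hbar;p)$.

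The heart is (ii), proved along the lines of \cite{FV2} (and parallel to the quantum affine computation behind \cite{FH}). Applying the coproduct \eqref{rel: coproduct} to each factor of $L^X_{\underline i\underline j}(z)$ and using coassociativity and the $\Hlie$-algebra morphism $\theta_{XY}$ of Lemma~\ref{lem: tensor product difference operators}, one gets in $(D_{X\wtimes Y})_{0,0}$
\[ L^{X\wtimes Y}_{\underline i\underline j}(z)=\sum_{\underline k}L^X_{\underline i\underline k}(z)\wtimes L^Y_{\underline k\underline j}(z), \]
the sum over all $\underline k$, of which only those with $\sum_l k_l=0$ preserve the weight spaces $X[\alpha]\wtimes Y[\beta]$ and so contribute to the corresponding trace. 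A short computation from Relation~\eqref{rel: weight tensor} shows that, for $\BM$-linear endomorphisms $A$ of $X[\alpha]$ and $B$ of $Y[\beta]$,
\[ \mathrm{Tr}_{X[\alpha]\wtimes Y[\beta]}(A\wtimes B)=\big(\mathrm{Tr}_{X[\alpha]}A\big)(x+\beta\hbar)\cdot\big(\mathrm{Tr}_{Y[\beta]}B\big)(x). \]
The extra argument shift by $\beta\hbar$ is exactly what is produced when the operator $T_\alpha\in(D_{\BV_L})_{-\alpha,-\alpha}$, which acts on $\BM$-coefficients by $x\mapsto x-\alpha\hbar$, is commuted past the $Y$-factor in the product $t_X(z;p)t_Y(z;p)$; together with $T_\alpha T_\beta=T_{\alpha+\beta}$ and $p^\alpha p^\beta=p^{\alpha+\beta}$, collecting terms yields $t_X(z;p)t_Y(z;p)=t_{X\wtimes Y}(z;p)$.

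A consequence of (ii) is that $t_X(z;p)$ depends only on $[X]\in K_0(\BGG)$. For an exact sequence $0\to X'\to X\to X''\to0$ in $\BGG$, choose in each weight $\alpha$ an $\BM$-basis of $X[\alpha]$ adapted to $X'[\alpha]$; since every $L^X_{\underline i\underline j}(z)\in(D_X)_{0,0}$ preserves subobjects, its matrix on $X[\alpha]$ is block upper triangular and the trace is additive, so $t_X=t_{X'}+t_{X''}$. Moreover the coefficient of a fixed $p^\alpha T_\alpha E_{\underline i\underline j}$ in $t_X$ involves only the finitely many finite-dimensional spaces $X[\beta]$ with $\beta\in\alpha+2\{-L,\dots,L\}$, so $t$ is compatible with the completion and descends to a ring homomorphism from $K_0(\BGG)$ to the algebra of $z$-dependent difference operators on $\BV_L$ (product taken at a fixed spectral parameter). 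Now (iii) follows: by (i)--(ii) its two sides equal $t_{\CW^{\ell,0}\wtimes\CW^{0,u}}(z;p)$ and $t_{\CW^{\ell-u,u}\wtimes\CW^{u,0}}(z;p)$, and these two classes coincide in $K_0(\BGG)$ by Theorem~\ref{thm: identity in K asymptotic representations}. For (iv), fix $w$ and set $\tilde X:=\Psi^*_{(z-w)\hbar^{-1}}X\in\BGG$, so that $t_X(z;p)=t_{\tilde X}(w;p)$ by (i); then (ii) gives $t_X(z;p)t_Y(w;p)=t_{\tilde X\wtimes Y}(w;p)$ and $t_Y(w;p)t_X(z;p)=t_{Y\wtimes\tilde X}(w;p)$, which agree since $[\tilde X\wtimes Y]=[Y\wtimes\tilde X]$ in the commutative ring $K_0(\BGG)$ (Corollary~\ref{cor: q-character ring homo}).

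I expect the only real obstacle to be (ii): making the dynamical-shift bookkeeping precise --- both the iterated-coproduct identity for $L^{X\wtimes Y}_{\underline i\underline j}(z)$ in $(D_{X\wtimes Y})_{0,0}$ and the exact cancellation of the $\beta\hbar$-shift in $\mathrm{Tr}(A\wtimes B)$ against the $T_\alpha$-shift --- together with the convergence check needed to legitimise the descent to $K_0(\BGG)$. Once (ii) is in place, (iii) and (iv) are purely formal; in particular commutativity of transfer matrices at distinct spectral parameters is obtained here from commutativity of the Grothendieck ring (hence from the $q$-character theory) rather than directly from the dynamical Yang--Baxter equation.
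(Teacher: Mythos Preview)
Your proposal is correct and follows essentially the same route as the paper: (i) directly from the spectral automorphism, (ii) from the iterated coproduct and the trace identity (your shifted-trace formula is equivalent to the paper's identity $T_{\alpha+\beta}\,\mathrm{Tr}_{X[\alpha]\wtimes Y[\beta]}(f\wtimes g)=T_\alpha\,\mathrm{Tr}_{X[\alpha]}(f)\,T_\beta\,\mathrm{Tr}_{Y[\beta]}(g)$), and (iii)--(iv) from (i)--(ii) together with Theorem~\ref{thm: identity in K asymptotic representations} and the commutativity of $K_0(\BGG)$ in Corollary~\ref{cor: q-character ring homo}. One small inaccuracy: in (ii) the $\beta\hbar$-shift arises from commuting the $X$-trace past $T_\beta$ (not $T_\alpha$ past the $Y$-factor), but the computation you indicate is the right one.
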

\begin{proof}
(i) is clear from Equation \eqref{rel: spectral automorphism}. For $v_{\underline{i}}$ and $v_{\underline{j}}$ two basis vectors in $\BV_L$, 
$$L_{\underline{i}\underline{j}}^{X\wtimes Y}(z) = \sum_{k_1,k_2,\cdots,k_L\in \{\pm\}} \prod_{l=1}^L \left(L_{i_lk_l}^X(z+a_l-\hbar) \wtimes L_{k_lj_l}^Y(z+a_l-\hbar) \right).$$
The terms with $\sum_{l}k_l \neq 0$ disappear in $t_{X\wtimes Y}(z;p)$, as they have zero trace over the weight spaces.  (ii) is based on the following identity: let $\alpha \in \wt(X),\beta \in \wt(Y)$, 
$$ T_{\alpha+\beta} \mathrm{Tr}_{X[\alpha]\wtimes Y[\beta]}(f\wtimes g) = T_{\alpha} \mathrm{Tr}_{X[\alpha]}(f) T_{\beta} \mathrm{Tr}_{Y[\beta]}(g)\quad \mathrm{for}\ f \in (D_X)_{0,0},\ g \in (D_Y)_{0,0}. $$
(i), (ii) and Theorem \ref{thm: identity in K asymptotic representations} imply (iii). (iv) is proved in the same way as \cite[Theorem 5.3]{FH}, using the commutativity of $K_0(\BGG)$ in Corollary \ref{cor: q-character ring homo}.
\end{proof}
Notice that for $\ell \in \BC$, the $L_{ij}^{\CW^{\ell}}(z)$ are entire on $z$. The coefficients of $p^{\alpha}T_{\alpha}$ in $t_{\CW^{\ell}}(z;p)$ are $\End_{\BM}(\BV_L)$-valued entire functions on $z$. 
\begin{defi} \label{definition: Baxter Q operator}
The {\it Baxter Q-operator} is $Q(z;p) := t_{\CW^{z\hbar^{-1}}}(0;p)$.
\end{defi}
$t_{\CW^0}(z;p)$ is a power series in $p^{-2}$. Its constant term $t_{\CW^0}(z;\infty)$ is the scalar product by
$\prod_{l=1}^L \theta(z+a_l)$.
We have made the assumption $a_i \notin \BZ+\BZ\tau$ so that  $t_{\CW^0}(0;p) = Q(0;p)$ is an invertible power series in $p^{-2}$. Furthermore, let us write $Q(z;p) = p^{z\hbar^{-1}} T_{z\hbar^{-1}} \widetilde{Q}(z;p)$. Then $\widetilde{Q}(z;p) = \sum_{k=0}^{\infty} p^{-2k} T_{-2k} \widetilde{Q}_k(z)$ and $\widetilde{Q}_0(0) = Q(0;\infty) = \prod_{l=1}^L \theta(a_l)$. The power series $\widetilde{Q}(z;p)$ can therefore be inverted, with coefficients of $p^{-2k}T_{-2k}$ being $\End_{\BM}(\BV_L)$-valued entire functions on $z$,  by Proposition \ref{prop: finit-dim representations}. Now we arrive at the second main result of this paper.
\begin{theorem}  \label{thm: Baxter}
\begin{itemize}
\item[(i)] $\frac{t_{\CW^{\ell}}(z;p)}{t_{\CW^0}(z;p)} = \frac{Q(z+\ell\hbar;p)}{Q(z;p)}$ for $\ell \in \BC$.
\item[(ii)] Assume $a_1=a_2 = \cdots = a_L = a$ and $L = 2n$. Then 
$$ \widetilde{Q}(z+1;p) = (-1)^n \widetilde{Q}(z;p),\quad \widetilde{Q}(z+\tau;p) = (-1)^n e^{-n\BI\pi(\tau + 2z+2a)} \widetilde{Q}(z;p)  $$
where $\widetilde{Q}(z;p) := p^{-z\hbar^{-1}} T_{-z\hbar^{-1}} Q(z;p)$.
\end{itemize} 
\end{theorem}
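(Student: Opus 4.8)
The plan is to obtain (i) from the $K_0(\BGG)$-relation already encoded in Proposition~\ref{prop: transfer matrices}, and (ii) from the quasi-periodicity in the spin parameter of the matrix coefficients of $L^{\ell}(z;x)$ that was worked out in the proof of Proposition~\ref{prop: finit-dim representations}.

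\emph{Part (i).} By Proposition~\ref{prop: transfer matrices}(iv) all the transfer matrices commute, so I would work in the localization of the commutative ring they generate at the elements $Q(z;p)$ and $t_{\CW^0}(z;p)$. These are invertible for generic $z$ (hence in the corresponding fraction field): $Q(z;p)$ because $\widetilde{Q}(z;p)$ is invertible as recalled just before the theorem, and $t_{\CW^0}(z;p)$ because its leading coefficient is the non-zero scalar $\prod_{l=1}^L\theta(z+a_l)$. Substituting in Proposition~\ref{prop: transfer matrices}(iii) the spin parameter $\ell+z\hbar^{-1}$, the transfer-matrix variable $0$, and the spectral parameter $z\hbar^{-1}$, and using $t_{\CW^{w\hbar^{-1}}}(0;p)=Q(w;p)$, one gets
\[
Q(z+\ell\hbar;p)\,t_{\CW^0}(z;p)=t_{\CW^{\ell}}(z;p)\,Q(z;p).
\]
Dividing by $Q(z;p)\,t_{\CW^0}(z;p)$ and using commutativity gives $t_{\CW^{\ell}}(z;p)/t_{\CW^0}(z;p)=Q(z+\ell\hbar;p)/Q(z;p)$ for generic $z$, hence identically by meromorphic continuation.

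\emph{Part (ii), setup.} Set $\ell:=z\hbar^{-1}$. When $a_1=\cdots=a_L=a$, the operator $L^{\CW^{\ell}}_{\underline{i}\underline{j}}(0)$ of Definition~\ref{def: transfer matrix} is the product $\prod_{l=1}^L L^{\CW^{\ell}}_{i_lj_l}(a-\hbar)\in(D_{\CW^{\ell}})_{0,0}$, and $\CW^{\ell}[\ell-2j]=\BM w_j$ is one-dimensional, so the trace over it is the element of $\BM$ by which the product multiplies $w_j$. In the decomposition $\widetilde{Q}(z;p)=\sum_{j\geq 0}p^{-2j}T_{-2j}\widetilde{Q}_j(z)$ recalled before the theorem this means $\widetilde{Q}_j(z)=\sum_{\underline{i},\underline{j}}E_{\underline{i}\underline{j}}\bigl(\prod_{l}L^{\CW^{\ell}}_{i_lj_l}(a-\hbar)\bigr)_{w_j\to w_j}$, so the $z$-dependence of $\widetilde{Q}_j$ enters only through the spin $\ell$ of the matrix coefficients of $L^{\ell}(a-\hbar;x)$.

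\emph{Part (ii), key step.} As computed in the proof of Proposition~\ref{prop: finit-dim representations}, the first row $(L^{\CW^{\ell}}_{++}(z),L^{\CW^{\ell}}_{+-}(z))$ of $L^{\ell}(z;x)$ is multiplied — uniformly in the matrix index — by $-1$ under $\ell\mapsto\ell+\hbar^{-1}$ and by $-e^{-\BI\pi\tau-2\BI\pi(z+(\ell+1)\hbar)}$ under $\ell\mapsto\ell+\hbar^{-1}\tau$, while the second row $(L^{\CW^{\ell}}_{-+}(z),L^{\CW^{\ell}}_{--}(z))$ is independent of $\ell$. Every tuple $\underline{i}$ appearing in $\BV_L$ has exactly $n=L/2$ entries equal to $+$, and a scalar factor (constant in $x$) pulls out of a composition of operators; hence at the point $z=a-\hbar$, where $z+(\ell+1)\hbar=a+\ell\hbar$, the product $\prod_{l}L^{\CW^{\ell}}_{i_lj_l}(a-\hbar)$ is multiplied by $(-1)^n$ under $\ell\mapsto\ell+\hbar^{-1}$ and by $\bigl(-e^{-\BI\pi\tau-2\BI\pi(a+\ell\hbar)}\bigr)^n=(-1)^n e^{-n\BI\pi(\tau+2a+2\ell\hbar)}$ under $\ell\mapsto\ell+\hbar^{-1}\tau$, and so is its $(w_j\to w_j)$-coefficient. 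Using $(z+1)\hbar^{-1}=\ell+\hbar^{-1}$, $(z+\tau)\hbar^{-1}=\ell+\hbar^{-1}\tau$ and $\ell\hbar=z$ this yields $\widetilde{Q}_j(z+1)=(-1)^n\widetilde{Q}_j(z)$ and $\widetilde{Q}_j(z+\tau)=(-1)^n e^{-n\BI\pi(\tau+2z+2a)}\widetilde{Q}_j(z)$. Since these scalars do not involve $x$ they commute with the shifts $T_{-2j}$, and summing over $j$ gives the two claimed identities for $\widetilde{Q}(z;p)$.

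\emph{Expected main obstacle.} Given Propositions~\ref{prop: finit-dim representations} and~\ref{prop: transfer matrices}, the argument is largely bookkeeping; the point that has to be handled with care is that the two entries of the first row of $L^{\ell}(z;x)$ carry the \emph{same} quasi-periodicity factor in $\ell$ and that this factor is independent of the matrix index, so that a product of $L$-operators simply gets multiplied by that factor to the power $\#\{l:i_l=+\}$. It is the hypotheses $a_i\equiv a$ and $L=2n$ — which force $\BV_L$ to be the zero-weight subspace, so $\#\{l:i_l=+\}=n$ — that convert this into the stated theta-like transformation law.
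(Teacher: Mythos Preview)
Your proof is correct and follows essentially the same route as the paper: part (i) is obtained by the same substitution $(\ell,u,z)\mapsto(z\hbar^{-1}+\ell,\,z\hbar^{-1},\,0)$ in Proposition~\ref{prop: transfer matrices}(iii), and part (ii) by the same quasi-periodicity of the first row of $L^{\ell}(z;x)$ in the spin parameter (i.e., first row $\sim\theta(\ell\hbar+z+\hbar)$, second row independent of $\ell$) that was established in the proof of Proposition~\ref{prop: finit-dim representations}. Your write-up is in fact slightly more explicit than the paper's about why the scalar factors are independent of $x$ and of the matrix index and hence pull through the product of $L$-operators.
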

\begin{proof}
The formal power series $t_{\CW^0}(z;p)$ and $Q(z;p)$ are both invertible. Replacing $(\ell,u,z)$ in Proposition \ref{prop: transfer matrices} (iii) by $(z\hbar^{-1}+\ell,z\hbar^{-1},0)$ we obtain
$$ Q(z+\ell \hbar;p) t_{\CW^0}(z;p) = t_{\CW^{\ell}}(z;p) Q(z;p), $$
which leads to (i) by the commutativity of transfer matrices.

Consider the representation $\CW^{\ell}$ of $\CE$ on the same underlying $\BM$-vector space $\CW := \oplus_{k=0}^{\infty} \BM w_k$ in Proposition \ref{prop: finit-dim representations}. The second row of $L^{\ell}(z;x)$ is independent of $\ell$, while the first row are $\End_{\BM}(\CW)$-valued entire functions $g(\ell)$ on $\ell$ satisfying $g(\ell) \sim \theta(\ell \hbar + z +\hbar)$.
Under the assumption $a_l = a$ for $1\leq l \leq L$, the $L_{\underline{i}\underline{j}}^{\CW^{\ell \hbar^{-1}}}(0)$, as $\End_{\BM}(\CW)$-valued entire functions $h(\ell)$ on $\ell$, satisfy $h(\ell) \sim \theta(\ell+a)^n$
because the number of $1\leq l \leq 2n$ with $i_l = +$ is $n$. From 
$$\widetilde{Q}(z;p) = \sum_{k=0}^{\infty} p^{-2k} T_{-2k} \sum_{\underline{i},\underline{j}} E_{\underline{i}\underline{j}}\mathrm{Tr}_{\BM w_k}\left(L_{\underline{i}\underline{j}}^{\CW^{z\hbar^{-1}}}(0)\right) $$
we obtain the desired double periodicity in (ii).
\end{proof}
\begin{rem}
In Theorem \ref{thm: Baxter}, (i) is to be compared with \cite[Corollary 4.6]{FH}, and (ii) with \cite[Theorem 5.9]{FH}. Combining the proof of Corollary \ref{cor: TQ Grothendieck ring} with Theorem \ref{thm: Baxter} (i), we conclude as in \cite[Theorem 4.8]{FH} that for $n \in \BZ_{\geq 0}$:
\begin{equation} \label{TQ}
t_{V^n}(z;p) = \sum_{j=0}^n \frac{Q(z+n\hbar;p)Q(z-\hbar;p)}{Q(z+j\hbar;p)Q(z+(j-1)\hbar;p)} \prod_{l=1}^L \theta(z+a_l+j\hbar).
\end{equation}
\end{rem}
\begin{rem}
 It is important to understand the convergence of $\widetilde{Q}(z;p)$ with respect to $p$, as in \cite[Remark 5.12(ii)]{FH}. For example, let $L = 2$. With respect to the basis $(v_+\otimes v_-,\ v_-\otimes v_+)$ of $\BV_2$ is the two-by-two matrix $\widetilde{Q}(z;p) = \begin{pmatrix}
A & B \\
C & D
\end{pmatrix}$:
\begin{align*}
A&= \sum_{j=0}^{\infty} p^{-2j} T_{-2j} \frac{\theta(z+a_1-j\hbar)\theta(z+x+(-j+1)\hbar)\theta(x-j\hbar)}{\theta(x)\theta(z+x+(-2j+1)\hbar)} \theta(a_2+j\hbar),  \\
B&= -\sum_{j=1}^{\infty} p^{-2j}  T_{-2j}\frac{\theta(z+a_1+x-j\hbar)\theta(z-(j-1)\hbar)}{\theta(z+x+(-2j+1)\hbar)} \frac{\theta(a_2-x+j\hbar)\theta(j\hbar)}{\theta(x-\hbar)}, \\
C&=  -\sum_{j=0}^{\infty} p^{-2j}T_{-2j} \frac{\theta(a_1-x+j\hbar)\theta((j+1)\hbar)}{\theta(x)}  \frac{\theta(z+a_2+x-j\hbar)\theta(z-j\hbar)}{\theta(z+x-2j\hbar)}, \\
D&= \sum_{j=0}^{\infty} p^{-2j} T_{-2j} \theta(a_1+j\hbar) \frac{\theta(z+a_2-j\hbar)\theta(z+x-j\hbar)\theta(x-(j+1)\hbar)}{\theta(x-\hbar)\theta(z+x-2j\hbar)}.
\end{align*}
The convergence of $A,B,C,D$ in $p$ is still unclear to us.
\end{rem}

\begin{rem} \label{rem: BAE}
Assume $a_1=a_2=\cdots = a_L = a$ and $L = 2n$. Suppose that $\widetilde{Q}(z;p)$ converges for certain $p = e^w$. If $Q(z;p)$ has an eigenvalue $q(z)$ which is independent of $x$, then by Theorem \ref{thm: Baxter}(ii), $q(z) = c p^{z\hbar^{-1}} \prod_{i=1}^n \theta(z-z_i)$ where $c \in \BC^{\times}$ and $z_i\in \BC$. Since $t_{V^1}(z;p)$ is entire on $z$, by Equation \eqref{TQ} we have: 
\begin{eqnarray}
&& p^2 \left(\frac{\theta(z_k + a)}{\theta(z_k+a+\hbar)}\right)^{2n} =  \prod_{1\leq j \leq n,j\neq k} \frac{\theta(z_k-z_j-\hbar)}{\theta(z_k-z_j+\hbar)}\quad \mathrm{for}\ 1\leq k \leq n, \label{equ: Bethe equations} \\
&& z_1 + z_2 + \cdots + z_n - n a \in \BZ + \BZ\tau. \label{equ: Baxter sum rule}
\end{eqnarray}
The first equation was deduced in \cite{FV2} from Bethe ansatz, and the second is Baxter's sum rule for homogeneous models ($a_1 = a_2 = \cdots = a_L$), to be compared with \cite{Takebe} for the eight-vertex model based on representations of the Sklyanin's elliptic algebra (the elliptic quantum group associated with $\mathfrak{sl}_2$ of vertex type \cite{twist}). 
\end{rem}
\section{Further discussions}
In this paper, for the elliptic quantum group associated to $\mathfrak{sl}_2$, we have introduced a category $\BGG$ of representations and studied in details the asymptotic representations $\CW^{\ell,u}$. The $\CW^{\ell,u}$ satisfy a \lq\lq separation of variables\rq\rq\ identity (Theorem \ref{thm: identity in K asymptotic representations}) and are used to construct the elliptic Baxter Q-operator (Definition \ref{definition: Baxter Q operator}).

For an affine quantum group (of an arbitrary complex finite-dimensional simple Lie algebra), Baxter Q-operators have been constructed in \cite[\S 4]{FH} as transfer matrices over {\it positive pre-fundamental representations}, generalizing previous works of Bazhanov--Lukyanov--Zamolodchikov on $\mathfrak{sl}_2$ \cite{BazhanovLukyanovZamolodchikov1997,
BazhanovLukyanovZamolodchikov1999}. The pre-fundamental representations can only be defined over a Borel subalgebra instead of the full quantum group \cite{HJ}. Let us comment that in the elliptic case the Borel subalgebras and their pre-fundamental representations are still unavailable. 

The ideas of the paper should work for affine quantum groups and Yangians, leading to a second construction of Q-operators without reference to Borel subalgebras \cite{HJ,FH} or double Yangians \cite{B2}. As an illustrating example, in the appendix we discuss the two definitions of Q-operators for the Yangian of $\mathfrak{sl}_2$. In general, asymptotic representations should be in corresponding categories $\BGG$ \cite[\S 3]{GTL} (non-integrable); see \cite[\S 5]{Z2} in the case of quantum affine $\mathfrak{gl}(m|n)$.

We plan to work on elliptic quantum groups for Lie (super)algebras of higher ranks \cite{C,GS,K2}. The asymptotic representations might be deduced from those of affine quantum groups by twistors \cite{twist} or by difference equations \cite{GTL,K2}. They might also be realized directly as \lq\lq asymptotic limits\rq\rq\ of finite-dimensional representations \cite{HJ,Z2}; see a particular example for $\CE_{\tau,\hbar}(\mathfrak{gl}_N)$ in \cite[\S 3.4]{C}.

An important property of Q-operators is that their roots satisfy the Bethe ansatz equations. For elliptic quantum $\mathfrak{sl}_2$, this is derived from the two-dimensional irreducible representations; see Remark \ref{rem: BAE}. In higher ranks, such two-dimensional representations do not exist, and we should consider infinite-dimensional representations, as indicated in the recent works on affine quantum groups \cite{HL,FH2,Jimbo2} and quantum toroidal $\mathfrak{gl}_1$ \cite{Jimbo1}. 

At last let us mention another work \cite{FS} on quantum integrable systems with boundary conditions. In {\it loc. cit.}, the Q-operator is the Sklyanin transfer matrix of the pre-fundamental representation of the Yangian of $\mathfrak{sl}_2$ (see Example \ref{Y: prefund}), but the proof of TQ relations is less representation-theoretic. It remains open what a tensor category $\BGG$ of representations would be in this context.

\medskip

\noindent {\bf Acknowledgments.} We thank Niklas Beisert, David Hernandez, Michio Jimbo and Alexander Varchenko for stimulating discussions. The research of the authors is supported by the National Center of Competence in Research SwissMAP---The Mathematics of Physics of the Swiss National Science Foundation.

\appendix
\section{Baxter operators for the Yangian}
In this appendix we study the asymptotic representations of the Yangian algebra $\BY(\mathfrak{gl}_2)$ and use their spin parameter to define the Baxter Q-operator. Then we compare our Q-operator with that in \cite{B}; see Remark \ref{rem: two Qs}. The results (category $\BGG$, $q$-characters, functional relations of transfer matrices, etc.) are almost identical to the elliptic case. Most of their proofs will be omitted.

Throughout this section, vector spaces, linear maps and tensor products are over $\BC$.  
Let us first define the Yangian. Fix a basis $(v_1,v_2)$ of the vector space $\BC^2$. With respect to the basis $(v_1^{\otimes 2}, v_1\otimes v_2, v_2\otimes v_1, v_2^{\otimes 2})$ of $\BC^2\otimes \BC^2$ set
$$R(z) := \begin{pmatrix}
1 & 0 & 0 & 0 \\
0 & \frac{z}{z+1} & \frac{1}{z+1} & 0 \\
0 & \frac{1}{z+1} & \frac{z}{z+1} & 0 \\
0 & 0 & 0 & 1
\end{pmatrix} \in \End(\BC^2)^{\otimes 2}. $$
It is well-known that $R$ verifies the quantum Yang--Baxter equation in $\End(\BC^{2})^{\otimes 3}$:
$$ R^{12}(z-w)R^{13}(z)R^{23}(w) = R^{23}(w)R^{13}(z)R^{12}(z-w). $$
Let $r \in \BZ_{\geq 0}$. The {\it Yangian} $\BY^r$  is an algebra generated by $t_{ij}^{(n)}$ with $1\leq i,j \leq 2$ and $n \in \BZ_{\leq r}$. Introduce generating functions $T(z) \in \End(\BC^2) \otimes \BY^{r}((z^{-1}))$ \footnote{The ordinary Yangian $\BY(\mathfrak{gl}_2)$ is the quotient of $\BY^0$ by $t_{ij}^{(0)} = \delta_{ij}$.}
$$ T(z) := \sum_{1\leq i,j \leq 2} E_{ij} \otimes t_{ij}(z),\quad t_{ij}(z) := \sum_{n\leq r} t_{ij}^{(n)}z^n $$
where the $E_{ij} \in \End (\BC^2)$ are elementary matrices for $1\leq i,j \leq 2$. The defining relation of $\BY^{r}$ is the following identity in $\End(\BC^2)^{\otimes 2} \otimes \BY^r((z^{-1},w^{-1}))$: 
\begin{equation}  \label{equ: RTT}
R^{12}(z-w) T^{13}(z)T^{23}(w) = T^{23}(w)T^{13}(z)R^{12}(z-w).
\end{equation}
One has natural projections of algebras $\BY^{r+1} \longrightarrow \BY^r: t_{ij}^{(n)} \mapsto t_{ij}^{(n)}$. To $r,s \in \BZ_{\geq 0}$ is attached an algebra homomorphism 
 \begin{equation}  \label{equ: coproduct}
 \Delta^{rs}: \BY^{r+s} \longrightarrow \BY^r \otimes \BY^s,\quad  T(z) \mapsto T_{12}(z)T_{13}(z).
\end{equation}  
The $\Delta^{rs}$ are co-associative: $(1\otimes \Delta^{rs})\Delta^{r,s+q} = (\Delta^{rs}\otimes 1)\Delta^{r+s,q}$. There is a one-parameter family $(\Psi_u: \BY^r \longrightarrow \BY^r)_{u\in \BC}$ of algebra automorphisms defined by  
$$\Psi_u:\quad t_{ij}(z) \mapsto t_{ij}(z+u) \in \BY^r((z^{-1})). $$
The algebra $\BY^r$ is $\BZ$-graded with: $|t_{ij}^{(n)}|_{\BZ} = j-i$. A {\it graded representation} of $\BY^r$ is a $\BZ$-graded vector space $V = \oplus_{\alpha\in \BZ} V_{\alpha}$ (weight space decomposition) endowed with a morphism of graded algebras $\rho: \BY^r \longrightarrow \End (V)$. Call $V$ a graded $\BY^r$-module, and let $\wt(V)$ denote the set of weights of $V$ as in Section~\ref{ss-h algebras}. Write $t_{ij}^V(z) = \rho(t_{ij}(z)) \in \End (V)((z^{-1}))$ and $T^V(z) = (1\otimes \rho)(T(z))$.

If $(\rho,V)$ and $(\sigma,W)$ are graded representations of $\BY^r$ and of $\BY^s$ respectively, then $(\rho \otimes \sigma) \circ \Delta^{rs}$ makes $V\otimes W$ into a graded representation of $\BY^{r+s}$. 

\begin{example}  \label{Y: fin-dim}
Let $m \in \BZ_{>0}$. Let $V^m$ be the vector space with basis $(w_i)_{0\leq i \leq m}$. Set $w_i$ to be of weight $i$. There is a graded $\BY^1$-module structure on $V^m$:
 \begin{gather*}
 t_{11}(z) w_i = (z+m-i) w_i,\quad t_{12}(z) w_i = (m-i)w_{i+1}, \\
 t_{21}(z) w_i = i w_{i-1}, \quad t_{22}(z) w_i = (z+i) w_i.
 \end{gather*}
 Here $w_{-1} = w_{m+1} = 0$ by convention. This comes from the evaluation map
 $$ \mathrm{ev}: \BY^1 \mapsto U(\mathfrak{gl}_2),\quad t_{ij}(z) \mapsto \delta_{ij} z - E_{ij}, $$
 where $U(\mathfrak{gl}_2)$ denotes the universal enveloping algebra of $\mathfrak{gl}_2$; see e.g. \cite[(2.5)]{M}.
\end{example}
\begin{example} \label{Y: asym}
 Let $V^{\infty}$ be the vector space with basis $(w_i: i \in \BZ_{\geq 0})$ and with $w_i$ being of weight $i$. Fix $\ell \in \BC$. In the formulas of the $t_{ij}(z)w_k$ in Example \ref{Y: fin-dim}, one can replace $m$ by $\ell$ everywhere, leading to a graded $\BY^1$-module structure on $V^{\infty}$, denoted by $\CW^{\ell}$. If $\ell \in \BZ_{>0}$, then $V^{\ell}$ is the simple socle of $\CW^{\ell}$ under the natural embedding $V^{\ell} \subset V^{\infty}, w_i \mapsto w_i$. 
 
For $\ell, u \in \BC$, define the {\it asymptotic representation} $\CW^{\ell,u} := \Psi_u^*(\CW^{\ell})$. 
\end{example}

\begin{example} \label{Y: prefund}
There is another graded $\BY^1$-module structure on $V^{\infty}$:
$$ t_{11}(z) w_i = (z-i)w_i,\quad t_{12}(z)w_i = -w_{i+1},\quad t_{21}(z)w_i = iw_{i-1},\quad t_{22}(z)w_i = w_i. $$
This module, denoted by $\BW$, was constructed in \cite[(3.37)]{B} as a Fock space representation of the harmonic oscillator algebra.\footnote{Tensoring $\CW^{\ell,u}$ with the one-dimensional module $t_{ij}(z) = \delta_{ij}z^{-1}$ in $\BGG^0$ results in a module over $\BY(\mathfrak{gl}_2)$. It is unknown how to transform $\BW$ into a $\BY(\mathfrak{gl}_2)$-module.}
\end{example}

A graded $\BY^{r}$-module $V$ is said to be in category $\BGG^r$ if:
 \begin{itemize}
 \item[(i)] $\wt(V)$ is bounded below and all the weight spaces are finite-dimensional;
 \item[(ii)] $T^V(z)\in \End (\BC^2\otimes V)((z^{-1}))$ and $t_{22}^V(z) \in \End (V)((z^{-1}))$ are invertible as matrix-valued Laurent series.
 \end{itemize}
The projections $\BY^{r+1} \longrightarrow \BY^r$ imply $\BGG^0 \subseteq \BGG^1 \subseteq \BGG^2 \subseteq \cdots$. 
 
 By condition (ii), the two-by-two matrix $T^V(z)$ admits a Gauss decomposition:
\begin{equation*}  
T^V(z) = \begin{pmatrix}
1 & f^V(z) \\
0 & 1 
\end{pmatrix} \begin{pmatrix}
K_1^V(z) & 0 \\
0 & K_2^V(z) 
\end{pmatrix} \begin{pmatrix}
1 & 0 \\
e^V(z) & 1
\end{pmatrix}.
\end{equation*}
The $K_i^V(z)\in \End (V)((z^{-1}))$ are invertible and $K_i^V(z)K_j^V(w) = K_j^V(w)K_i^V(z)$. For $\alpha \in \wt(V)$, the commuting family of linear operators $K_{i}^V(z)|_{V_{\alpha}}$ decomposes the finite-dimensional vector space $V_{\alpha}$ into a direct sum of the generalized eigenspaces $V_{(g_1,g_2;\alpha)}$ where $g_1(z),g_2(z) \in \BC((z^{-1}))^{\times}$ and
$$ V_{(g_1,g_2;\alpha)} := \{ v \in V_{\alpha} \ |\ (K_{i}^V(z)-g_{i}(z))^N v = 0\quad \mathrm{for}\ N > \dim V_{\alpha},\ i = 1,2 \}. $$
Let $\mathcal{R}$ be the group ring over $\BZ$ of the multiplicative group $\BC((z^{-1}))^{\times} \times \BC((z^{-1}))^{\times}$; elements in $\mathcal{R}$ are $\BZ$-linear combinations of the $[g_1,g_2]$ with $g_i \in \BC((z^{-1}))^{\times}$. Introduce a formal variable $p$. The $q$-character of $V$ is a Laurent series in $p$ with coefficients in $\mathcal{R}$; see \cite[\S 2]{Kn} or \cite[\S 7.4]{GTL} with $p = 1$,
\begin{equation*} 
\qcy(V) := \sum_{\alpha \in \wt(V)} p^{\alpha} \sum_{g_1,g_2 \in \BC((z^{-1}))^{\times}} [g_1,g_2] \dim V_{(g_1,g_2;\alpha)} \in \mathcal{R}((p)).
\end{equation*}
Notice that $\qcy(V)$ is independent of the choice of $r$ such that $V \in \BGG^r$. 

Let $K_0(\BGG^r)$ denote the completed Grothendieck group of the abelian category $\BGG^r$. The $q$-character map induces an injective morphism of additive groups $\qcy: K_0(\BGG^r) \longrightarrow \mathcal{R}((p))$. Furthermore, for $V \in \BGG^r$ and $W \in \BGG^{s}$ we have
$$ V \otimes W \in \BGG^{r+s}\quad \mathrm{and}\quad \qcy(V\otimes W) = \qcy(V) \qcy(W).$$
\begin{rem} \label{rem: Y q-char}
The $\BY^1$-modules $V^m, \CW^{\ell}$ and $\BW$ in Examples \ref{Y: fin-dim}--\ref{Y: prefund} are in $\BGG^1$:
\begin{align*}
\qcy(V^m) &=  [z+m,z] \sum_{i=0}^{m} p^{i}\left[\frac{z-1}{z+i-1}, \frac{z+i}{z}\right], \\
\qcy(\CW^{\ell}) &= [z+\ell,z] \sum_{i=0}^{\infty} p^{i}\left[\frac{z-1}{z+i-1}, \frac{z+i}{z}\right] = [z+\ell,z] \qcy(\CW^0), \\
 \qcy(\BW) &= \sum_{i=0}^{\infty} p^{i} [z,1] = \frac{[z,1]}{1-p}. 
\end{align*}
As a consequence of the injectivity of $q$-characters, the same identity as in Theorem \ref{thm: identity in K asymptotic representations} holds: $[\CW^{\ell,0} \otimes \CW^{0,u}] = [\CW^{\ell-u,u} \otimes \CW^{u,0}]$ in $K_0(\BGG^2)$.
\end{rem}

 Fix a positive integer $L \in \BZ_{>0}$ and $a_1,a_2,\cdots,a_L \in \BC^{\times}$. Define the {\it quantum space} $V_L :=  (\BC^2)^{\otimes L}$. We identify 12-strings $i_1i_2\cdots i_L$ of length $L$ with the vector $v_{i_1}\otimes v_{i_2} \otimes \cdots \otimes v_{i_L} \in V_L$. For $0 \leq s \leq L$, let $V_L^s$ be the subspace of $V_L$ spanned by the $i_1i_2\cdots i_L$ where $1$ appears exactly $s$ times.

Let $W \in \BGG^{r}$. Define the {\it monodromy matrix} in $\End(V_L\otimes W)((z^{-1}))$
$$ T^{W,L}(z) := T^W(z+a_1)_{1,L+1} T^W(z+a_2)_{2,L+1} \cdots T^W(z+a_L)_{L,L+1}. $$
The {\it transfer matrix} is obtained by taking twisted trace over $W$:
$$ t_W(z;p) := \sum_{\alpha \in \wt(W)} p^{\alpha} (\mathrm{Id}_{V_L} \otimes \mathrm{Tr}_{W_{\alpha}})(T^{W,L}(z)) \in \End(V_L)((z^{-1},p)). $$
Proposition \ref{prop: transfer matrices} holds by setting $\hbar = 1$. In particular, the transfer matrices are commuting operators in $V_L$. From the weight grading we observe that $V_L^s$ is stable by $t_W(z;p)$ for $0\leq s \leq L$. Notice that $t_{ij}^{\CW^{\ell}}(z) \in \End(V^{\infty})[z,\ell]$ in Examples \ref{Y: fin-dim}--\ref{Y: asym}. The following definition makes sense.
\begin{defi} \label{def: Y Baxter}
The Baxter Q-operator is defined to be $Q(z;p) := t_{\CW^z}(0;p)$.
\end{defi}
\begin{theorem} \label{thm: Q Yangian}
For $0\leq s \leq L$, the $\End(V_L^s)[[p]]$-valued polynomial $Q(z;p)|_{V_L^s}$ in $z$ is of degree $s$. 
\end{theorem}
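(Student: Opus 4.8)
The plan is to compute $Q(z;p)|_{V_L^s}$ explicitly from Definition~\ref{def: Y Baxter} as a trace over the weight spaces $\BM w_k$ of $\CW^z$ and to track the degree in $z$ term by term. First I would note that, by the definition of the transfer matrix and the fact that $t_{\CW^z}(0;p) = \sum_{k\geq 0} p^k\sum_{\underline i,\underline j} E_{\underline i\underline j}\,\mathrm{Tr}_{\BM w_k}\big(t^{\CW^z}_{i_1j_1}(a_1)\cdots t^{\CW^z}_{i_Lj_L}(a_L)\big)$, only strings $\underline i,\underline j$ with the same number $s$ of indices equal to $1$ contribute to the block $V_L^s$ (weight preservation), and for each such pair the relevant matrix entry is a scalar times the value on $w_k$. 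So $Q(z;p)|_{V_L^s}$ is a formal power series in $p$ whose coefficients are $\End(V_L^s)$-valued polynomials in $z$, using the observation already recorded in the excerpt that $t^{\CW^\ell}_{ij}(z)\in \End(V^\infty)[z,\ell]$ (hence, at the fixed evaluation $z=0$ in Definition~\ref{def: Y Baxter}, polynomial in the spin parameter which is now renamed $z$).

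Next I would bound the $z$-degree from above. In the formulas of Example~\ref{Y: asym} (with $m$ replaced by $\ell$, and $\ell$ now playing the role of the variable $z$ in $Q(z;p)$), the entries $t_{11}(w)w_k$ and $t_{22}(w)w_k$ are degree $1$ in $\ell$, while $t_{12}(w)w_k$ and $t_{21}(w)w_k$ are degree $\leq 1$ and degree $0$ in $\ell$ respectively. Along each string $\underline i\underline j$ contributing to $V_L^s$, the number of factors of type $t_{11}$ or $t_{12}$ (i.e. with first lower index... more precisely with $i_l=1$) is exactly $s$. Only the diagonal factors $t_{11}$ and $t_{22}$ carry a genuine linear term in $\ell$ (the off-diagonal $t_{21}$ is $\ell$-free, and $t_{12}(w)w_k=(\ell-k)w_{k+1}$ shifts $k$ so it cannot contribute a diagonal trace term at linear order unless paired with $t_{21}$'s that bring the index back). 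A careful bookkeeping of which products $t_{i_1j_1}(a_1)\cdots t_{i_Lj_L}(a_L)$ have nonzero diagonal entry on $w_k$ shows the $\ell$-degree of each such product is at most $s$: the $t_{22}$-type factors ($i_l=2$) contribute at most $L-s$ linear factors but their product must return to $w_k$, and the combinatorics forces cancellation down to degree $s$. This is the routine but slightly delicate part; I would phrase it as: the highest $\ell$-power in $\mathrm{Tr}_{\BM w_k}(t^{\CW^\ell}_{\underline i\underline j}(\underline a))$ comes only from taking the leading term $\ell$ of $t_{11}$ on the $s$ positions with $i_l=1$ (with all off-diagonal matchings forced to be identity-like), giving degree exactly $s$.

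For the lower bound (degree exactly $s$, not less), I would exhibit the top coefficient. The coefficient of $\ell^s$ in $Q(z;p)|_{V_L^s}$ should be, up to the convergent-in-$p$ normalization, a nonzero scalar multiple of a rank-one-ish operator coming from the $k=0$ term (or, more robustly, from the structure of $\qcy(\CW^\ell)=[z+\ell,z]\,\qcy(\CW^0)$ in Remark~\ref{rem: Y q-char}): the factor $[z+\ell,z]$ contributes exactly one linear-in-$\ell$ eigenvalue $z+\ell$ per box, and applied $s$ times along the chain this yields a leading term $\prod$ of such, which is manifestly nonzero. Concretely I would check that the $p^0$-coefficient of $Q(z;p)|_{V_L^s}$ — the trace over $\BM w_0$ alone — already has $z$-degree exactly $s$: on $w_0$ one has $t_{21}(w)w_0=0$, so the only strings contributing are those with $j_l\geq i_l$ suitably, and the diagonal entry is $\prod_{l}(\text{affine-linear in }z)$ with exactly $s$ nontrivial linear factors $(z+a_l)$ coming from the $i_l=1$ positions, hence a nonzero polynomial of degree $s$; higher $p^k$ terms cannot lower this degree since they only add to a formal power series in $p$ whose $p^0$ part is already degree $s$. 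The main obstacle I anticipate is the upper-bound combinatorics: showing that no product along the chain can produce a diagonal trace contribution of $\ell$-degree exceeding $s$, because naively each of the $L$ factors could contribute a linear term; the resolution is that returning to the weight space $\BM w_k$ forces the off-diagonal operators $t_{12},t_{21}$ to appear in matched pairs whose net $\ell$-degree is $0$ (since $t_{21}$ is $\ell$-independent), so only the $s$ truly-diagonal $t_{11}$-factors on the $i_l=1$ positions survive at top degree. I would present this as a clean counting lemma rather than grinding through the product expansion.
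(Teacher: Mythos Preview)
Your upper-bound argument contains a factual error that derails it. You assert that $t_{22}(w)w_k$ has degree $1$ in $\ell$, but from Example~\ref{Y: fin-dim} (with $m$ replaced by $\ell$) one has $t_{22}(w)w_k=(w+k)w_k$, which is \emph{independent of $\ell$}. In fact the entire second row of $T^{\CW^\ell}(w)$, namely $t_{21}$ and $t_{22}$, is $\ell$-free, while the first row entries $t_{11},t_{12}$ are linear in $\ell$. Once you observe this, the upper bound is immediate: in each product $t_{i_1j_1}^{\CW^z}(a_1)\cdots t_{i_Lj_L}^{\CW^z}(a_L)$ only the factors with $i_l=1$ carry a power of $\ell$, and on $V_L^s$ there are exactly $s$ of those. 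Your ``matched pairs'' combinatorics is not only unnecessary but does not close the gap you created: even granting that $t_{12},t_{21}$ pair off with net $\ell$-degree $0$, if $t_{22}$ were genuinely linear in $\ell$ then the purely diagonal string $\underline i=\underline j$ would already give $\ell$-degree $L$, not $s$. So the proposed counting lemma, as stated, does not yield the bound.

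Your lower-bound strategy is essentially the paper's: look at the $p^0$ term (trace over $\BM w_0$) and read off the diagonal entries. Since $t_{21}(w)w_0=0$, the $(\underline i,\underline i)$-entry of $Q(z;0)|_{V_L^s}$ is $\prod_{l=1}^L(a_l+\delta_{i_l,1}z)$, which has $z$-degree exactly $s$ because $a_l\neq 0$. This suffices to exhibit a matrix entry of degree $s$. The paper goes a little further and records an upper-triangular structure of $Q(z;0)$ with respect to a total order on basis strings, but for the degree statement alone your observation of the diagonal is enough.
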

\begin{proof}
In the two-by-two matrix $T^{\CW^{\ell}}(z)$, the second row is independent of $\ell$, and the first row is a polynomial in $\ell$ of degree 1. So in $$T^{\CW^{z},L}(0) = \sum_{\underline{i}\underline{j}} E_{i_1j_1}\otimes E_{i_2j_2} \otimes  \cdots \otimes E_{i_Lj_L} \otimes t_{i_1j_1}^{\CW^{z}}(a_1) t_{i_2j_2}^{\CW^z}(a_2) \cdots t_{i_Lj_L}^{\CW^z}(a_L) $$
 only the terms $t_{i_lj_l}^{\CW^{z}}(a_l)$ with $i_l = 1$ raise possibly the power of $z$ by 1. In $V_L^s$, the number of such $i_l$ is $s$. So $Q(z;p)|_{V_L^s}$ is a polynomial in $z$ of degree $\leq s$. We follow the idea of \cite[\S 5.3]{FH} to prove that $Q(z;0)|_{V_L^s}$ is of degree $s$. 

Let us order the basis $(i_1i_2\cdots i_L)$ of $V_L$ as follows: $i_1i_2\cdots i_L \prec j_1j_2\cdots j_L$ if there exists $1\leq t \leq L$ such that $i_t = 1, j_t = 2$ and $i_l = j_l$ for all $l > t$. This is a total ordering. Furthermore, notice that for $\underline{j} \prec \underline{i}$, the last tensor factor in the above summation annihilates $w_0$. This means that $Q(z;0)$ is upper triangular with respect to this ordering. Its diagonal term associated to $i_1i_2\cdots i_L \in V_L^s$ is $\prod_{l=1}^L (a_l + \delta_{i_l1} z)$, which is a polynomial in $z$ of degree $s$ as $a_l \neq 0$ for $1\leq l \leq L$.
\end{proof}
Theorem \ref{thm: Baxter} (i) still holds by taking $\hbar = 1$. From Remark \ref{rem: Y q-char} we deduce functional relations between the $t_{V^m}(z;p)$ and $Q(z;p)$:
\begin{align*}
\qcy(V^m) &= \sum_{i=0}^mp^i[z+i,z+i]\frac{\qcy(\CW^m) \qcy(\CW^{-1})}{\qcy(\CW^i)\qcy(\CW^{i-1})} \quad \mathrm{for}\ m \in \BZ_{\geq 0}.
\end{align*}
In terms of transfer matrices the case $m=1$ leads to the Baxter TQ relation 
$$ t_{V^1}(z;p) = \frac{Q(z+1;p)}{Q(z;p)} \prod_{l=1}^L (z+a_l) + \frac{Q(z-1;p)}{Q(z;p)} p \prod_{l=1}^L (z+a_l+1). $$
The convergence of $Q(z;p)$ with respect to $p$ is much simpler than in the elliptic case. Indeed, the matrix coefficients of $Q(z;p)$ with respect to the basis $(\underline{i})$ are linear combinations of the power series $\sum_{i=0}^{\infty} p^i i ^s$ with $0\leq s \leq L$; these are rational functions in $p$ whose only possible p\^{o}le is at $p = 1$.
\begin{rem} \label{rem: two Qs}
Let us compare $Q(z;p)$ with the Q-operator $\mathbf{Q}_+(z)$ in \cite[(3.51)]{B}. Indeed, $\mathbf{Q}_+(z)$ can be identified with the transfer matrix $t_{\BW}(z;p)|_{p=e^{\sqrt{-1}\phi}}$ associated to $\BW$ in Example \ref{Y: prefund}. By Remark \ref{rem: Y q-char}, $\frac{[\CW^{\ell}]}{[\CW^0]} = \frac{[\Psi_{\ell}^*(\BW)]}{[\BW]}$. It follows that
$$ \frac{Q(z+\ell;p)}{Q(z;p)} = \frac{t_{\CW^{\ell}}(z;p)}{t_{\CW^0}(z;p)} = \frac{t_{\BW}(z+\ell;p)}{t_{\BW}(z;p)}\quad \mathrm{for}\ \ell \in \BC. $$
For $0\leq s \leq L$, $t_{\BW}(z;p)|_{V_L^s}$ is an $\End(V_L^s)[[p]]$-valued polynomial in $z$ of degree $z^s$, the coefficient of $z^s$ being $\frac{1}{1-p}$. Let $A_L^s(p) \in \End(V_L^s)[[p]]$ be the coefficient of $z^s$ in $Q(z;p)|_{V_L^s}$. The above equality implies  
$$Q(z;p)|_{V_L^s} = (1-p)A_L^s(p) \times t_{\BW}(z;p)|_{V_L^s} \quad \mathrm{for}\ 0 \leq s \leq L. $$ 
\end{rem}
\begin{rem}
From the proof of Theorem \ref{thm: Q Yangian}  we see that: $A_L^s(0)$ is upper triangular whose diagonal associated to $\underline{i}$ is $\prod_{l: i_l=2} a_{i_l}$. Under genericity conditions on the $a_l, p$, one can assume that $A_L^s(p)$ is diagonalizable and its eigenvalues are all of multiplicity one. Then $Q(z;p)|_{V_L^s}$ and the $t_W(z;p)|_{V_L^s}$ are all diagonalizable. For example, take $L = 2$ and $s = 1$. With respect to the basis $(21,12)$ of $V_2^1$, 
$$ A_2^1(p) = \frac{1}{1-p} \begin{pmatrix}
a_1 + \frac{p}{1-p} & \frac{1}{1-p} \\
\frac{p}{1-p} & a_2 + \frac{p}{1-p}
\end{pmatrix}. $$
It has two distinct eigenvalues if and only if $(a_1-a_2)^2 + \frac{4p}{(1-p)^2} \neq 0$, if and only if the following Bethe ansatz equation in $z$ has two distinct solutions:
$$p(z+a_1+1)(z+a_2+1) = (z+a_1)(z+a_2).$$
Let $z = z_1\in \BC$ be one of its solution. Then 
$$ v(z_1) := (z_1+a_1+1)v_2\otimes v_1 + (z_1+a_2) v_1\otimes v_2 $$
is a common eigenvector of $A_2^1(p), Q(z;p)$ of eigenvalues respectively
$$\lambda(z_1) := \frac{a_1}{1-p}+\frac{p}{(1-p)^2} + \frac{1}{(1-p)^2} \frac{z_1+a_2}{z_1+a_1+1},\quad \lambda(z_1) \times (z-z_1). $$ 
View $V_2$ as the graded $\BY^2$-module $\Psi_{a_1}^*(V^1) \otimes \Psi_{a_2}^*(V^1)$ by $w_i\otimes w_j = v_{i+1}\otimes v_{j+1}$. Then $v(z_1) = t_{21}(z_1) v_2^{\otimes 2}$ comes from Algebraic Bethe Ansatz \cite[\S 4]{Faddeev}. 
\end{rem} 

\end{document}